\newtheorem{defi}{Definition}[section]
\newtheorem{exa}[defi]{Example}
\newtheorem{lema}[defi]{Lemma}
\newtheorem{teo}[defi]{Theorem}
\newtheorem{rem}[defi]{Remark}
\newtheorem{coro}[defi]{Corollary}
\newtheorem{pro}[defi]{Proposition}
\newtheorem*{rem*}{Remark}
\newcommand{\1}{\mathbb{1}}
\newcommand{\p}{{\mathfrak{p}}}
\newcommand{\interior}[1]{%
  {\kern0pt#1}^{\mathrm{o}}%
}
\newcommand{\MO}{\mathfrak{L}}
\newcommand{\C}{\mathbb{C}}
\newcommand{\Q}{\mathbb{Q}}
\newcommand{\K}{\mathbb{K}}
\newcommand{\R}{\mathbb{R}}
\newcommand{\N}{\mathbb{N}}
\newcommand{\Z}{\mathbb{Z}}
\newcommand{\esp}{\text{  }}
\renewcommand\eqref[1]{(\ref{#1})} 
\begin{document}
\title[FUNDAMENTAL SOLUTION OF THE VLADMIROV-TAIBLESON OPERATOR ON NONCOMMUATIVE VILENKIN GROUPS] 
 {FUNDAMENTAL SOLUTION OF THE VLADMIROV-TAIBLESON OPERATOR ON NONCOMMUTATIVE VILENKIN GROUPS}

\author{Julio Delgado \and J.P. Velasquez-Rodriguez 
}

\newcommand{\Addresses}{

{
  \bigskip
  \footnotesize
  
  J.P.~VELASQUEZ-RODRIGUEZ, \textsc{Department of Mathematics: Analysis, Logic and Discrete Mathematics, Ghent University, Belgium.}\par\nopagebreak
  \textit{E-mail address:} \texttt{juanpablo.velasquezrodriguez@ugent.be}
}

{
  \bigskip
  \footnotesize
  
  JULIO DELGADO, \textsc{Department of Mathematics, Universidad del Valle, Colombia.}\par\nopagebreak
  \textit{E-mail address:} \texttt{delgado.julio@correounivalle.edu.co}
}

}

\thanks{The authors are supported by the FWO Odysseus 1 grant G.0H94.18N: Analysis and Partial Differential Equations. }

\subjclass[20é0]{Primary; 22E35, 35S05; Secondary: 12H25, 35A08. }

\keywords{Locally profinite groups, Vilenkin groups, Fourier Analysis, Pseudo-differential operators}

\date{\today}
\begin{abstract}
The fundamental solution and the heat semigroup of the Vladimirov-Taibleson operator on constant-order noncommutative Vilenkin groups are obtained, together with some estimates on the associated heat kernel. We also show the existence of a fundamental solution for the "Vladimirov Laplacian" on the $p$-adic Heisenberg group and the $p$-adic Engel group, and discuss possible extensions of our results to more general homogeneous operators on graded $p$-adic Lie groups.     
\end{abstract}
\maketitle
\tableofcontents

\section*{Introduction }
During the last 35 years there has been a growing interest in $p$-adic pseudo-differential equations, $p$-adic analysis and their connections to several disciplines like medicine, biology, physics, etc. See \cite{ap-adicmathfirst30years} and the references there in for a detailed account on some of the developments in $p$-adic analysis and applications during the last 35 years. In the same way many natural phenomena can be modelled by using complex numbers and differential equations, like the heat equation or the Schr{\"o}dinger equation, one can model some phenomena like fluid dynamics in porous media  \cite{appwaveletsreactiondiffusionmodels, e18070249} and protein dynamics \cite{Vladimirovbook} with ultrametric spaces and $p$-adic pseudo-differential equations. A very important tool in ultrametric analysis is the Vladimirov-Taibleson operator studied in \cite{Taiblesonbook, Vladimirovbook} which can be regarded as the analogue of the fractional Laplace operator on totally disconnected spaces. Equations with the Vladimirov-Taibleson operator have been studied intensively in the last few years, and one can find in the literature the study of its fundamental solutions \cite{RodriguezJ}, radial solutions \cite{Kochubei2014RadialSO}, an analogue of the Navier-Stokes equation \cite{p-adicnavierstokes}, non-linear equations and $p$-adic wavelets \cite{nonlinearp-adicequationsandwavelets}, a $p$-adic analogue of the porous medium equation \cite{porousmediumequation}, among many other works. So, in the same way the Laplacian plays a fundamental roll in physics and geometry, the Vladimirov-Taibleson operator is central in the study of nonarchimedean pseudo-differential equations, and the modelling of natural phenomena with ultrametric spaces, mostly done on a non-archimedean local field like the field of $p$-adic numbers $\Q_p$, or more generaly on abelian Vilenkin groups \cite{GreenFunctVladimirovDerivatives, UltraPseudoDiffEq, BookZG}. In the noncommutative case the literature about analysis on $p$-adic Lie groups and Vilenkin groups is rather limited and, in the knowledge of the authors, there are no works on the Vladimirov-Taibleson operator or pseudo-differential operators. We intend to contribute to the literature by computing the fundamental solution of this and some other related operators on noncommutative Vilenkin groups, a class of locally compact topological groups named in analogy with the well-known locally compact abelian Vilenkin groups, which is a class of topological groups named by Onneweer after Vilenkin who studied them first \cite{Vilenkinpaper}, even though the modern definition is probably due to Spector, Edwards and Gaudry, who called these groups \emph{``groups having a suitable family of open subgroups”}. See Onneweer's memory about his involvement with the dyadic derivative in \cite[Volume 1, Chapter 7]{Dyadic}. \\

The problem of the existence of a fundamental solution for pseudo-differential operators in ultrametric spaces is a non-trivial one and it has received a lot of attention in the literature. It is interesting, among other reasons, because it lies in the intersection of several areas, and because it is connected to some physical theories and the modelling of some natural phenomena. For example we know that:
\begin{itemize}
    \item There are several models that
describe the dynamics of hierarchic complex systems in terms of a master p-adic pseudo-differential equation, like the heat equation with the Vladimirov operator, see \cite{avetisov}. In this context the Cauchy problem associated to a pseudo-differential operator becomes relevant and it is natural to study its fundamental solution. See for instance \cite{Chuong2008TheCP, Torresblanca-Badillo2, RodriguezJ, fundsoltaibleson, Torresblanca-Badillo1} and the references therein. 
\item Vladimirov derivatives play an important role in the formulation of conformal field theories defined on a non-archimedean local field. In such setting the \emph{action} of the system can be described in terms of a kinetic term defined in terms of the Vladimirov derivative corresponding to a given Hecke character. See \cite{GreenFunctVladimirovDerivatives, doi:10.1142/S0217751X89002065, cmp/1104178891}. 
\item There is a connection between local zeta functions and fundamental solutions for pseudo-differential operators with polynomial symbols over non-archimedean local fields, see for instance \cite{articleFundSol}. It turns out that, for pseudo-differential operators with polynomial symbols, the existence of a meromorphic continuation for the local zeta function associated to the polynomial implies the existence of a fundamental solution. It is worth to mention that local zeta functions are very interesting on their own, for instance they serve in p-adic string theory to represent string amplitudes, see \cite{sym13060967} and the references therein. 
\end{itemize} 
However, apart from all the relevant applications, the existence of fundamental solutions for pseudo-differential operators is an interesting mathematical problem on its own, and we wish to contribute to it here by taking some elements of the theory to a noncommutative setting like $p$-adic Lie groups or noncommutative Vilenkin groups. In the process we will adapt to the $p$-adic case some ideas from the real case that have not been explored on totally disconnected groups, like the interactions between the representation theory of the group and the existence of solutions for an invariant operator, so, even when some of our methods are known techniques on real groups, their application to the unexplored setting of noncommutative Vilenkin groups is a novelty worth to highlight. 

\section{Preliminaries and statement of the main results}

In this section we provide some basic definitions and the main properties on locally compact Vilenkin groups that we will require. Our main goal here is to explain in detail our results and the way they extend to the noncommutative case the well known results about the existence of a fundamental solution for the Vladimirov-Taibleson operator and the Vladimirov Laplacian on $\Q_p^d$. The starting point is to recall the formal definition of the class of groups we are about to study: 
\begin{defi}\label{defilcvilenkingroup}\normalfont
We say that a topological group $G$ is a \emph{locally compact Vilenkin group} if $G$ is a  locally compact, Hausdorff, totally disconnected, unimodular, type I topological group, endowed with a strictly decreasing sequence of compact open subgroups $\mathscr{G}=\{G_n\}_{n \in \Z}$ such that \begin{enumerate}
    \item[(i)] It holds:$$| G_n /G_{n+1}|< \infty,$$ for every $n \in \Z.$
    \item[(ii)]$$G= \bigcup_{n \in \Z} G_n , \esp \esp \text{and} \esp \esp \bigcap_{n \in \Z} G_n = \{e\}.$$
    \item[(iii)]The sequence $\{G_n\}_{n \in \Z}$ form a basis of neighbourhoods at $e \in G$. If we also have $$\sup_{k \in \Z} |G_k/G_{k+1}|< \infty,$$we say that $G$ is a bounded-order Vilenkin group. If $|G_k/G_{k+1}|$ is constant we say that $G$ is a constant-order Vilenkin group.   
\end{enumerate}
\end{defi}
Vilenkin groups are locally profinite groups endowed with a sequence of compact open subgroups that forms a basis of neighbourhoods at the identity. The reason to consider such sequence is because by the Birkhoff-Kakutani theorem its existence guaranties the metrisability of these groups, and the obtained metric is compatible with the group structure. 

\begin{defi}\normalfont\label{metricvilenkingroups}
Let $G$ be a locally compact Vilenkin group with a sequence of compact open subgroups $\mathscr{G}= \{ G_n\}_{n \in \Z}$. For $x,y \in G$ the distance between $x$ and $y$ is defined as:
\[\varrho_\mathscr{G} (x,y) = |x y^{-1}|_{\mathscr{G}} :=\begin{cases} 0 & \esp \esp \text{if} \esp x=y, \\ |G_n|  & \esp \esp \text{if} \esp x y^{-1} \in G_n \setminus G_{n+1}.\end{cases}\] In this work we will consider exclusively constant-order Vilenkin groups, let us say $|G_k/G_{k+1}|=\varkappa$ for all $k \in \Z$, so that the metric on the group takes the form \[\varrho_\mathscr{G} (x,y) = |x y^{-1}|_{\mathscr{G}} :=\begin{cases} 0 & \esp \esp \text{if} \esp x=y, \\ \varkappa^{-n}  & \esp \esp \text{if} \esp x y^{-1} \in G_n \setminus G_{n+1}.\end{cases}\]
\end{defi}

Some examples of constant-order locally compact Vilenkin groups are:

\begin{exa}\label{exapadicnumbers}\normalfont
The field of $p$-adic numbers $G= \Q_p$, where $p$ is a prime number, together with the sequence of compact open subgroups $$G_n:=\{x \in \Q_p \esp : |x|_p \leq p^{-n}\}=p^n \Z_p.$$More generally, the additive group of any nonarchimedean local field $\K$, together with the compact open balls associated to the ultrametric topology of the field, are examples of locally compact abelian Vilenkin groups. The Vladimirov operator, which gives the analogue of fractional derivative in this setting, was originally studied in this framework and, for $\alpha \in \C$ with $\mathfrak{Re}(\alpha)>0$, it is defined as $$D^\alpha f (x):=\frac{1-p^\alpha}{1- p^{-\alpha - 1}} \int_{\Q_p} \frac{f(x-y) - f(x)}{|y|_p^{\alpha +1}} dy, $$acting initially on locally constant functions with compact support. Notice that for any $x,y \in G=\Q_p$, the distance function $| \cdot |_{\mathscr{G}}$ associated to $\mathscr{G}=\{G_n\}_{n \in \Z}$ has the property $|x-y|_{\mathscr{G}}= |x-y|_p$, so that we can write $$D^\alpha f (x):=\frac{1-p^\alpha}{1- p^{-\alpha - 1}} \int_{\Q_p} \frac{f(x-y) - f(x)}{|y|_{\mathscr{G}}^{\alpha +1}} dy.$$
\end{exa}

\begin{exa}\label{exapadicheisenberg}\normalfont
Let $p \neq 2$ be a prime number. We will denote by $\mathbb{H}_{d}(\Q_p)$, or simply $\mathbb{H}_d$ for short, the $2d+1$-dimensional Heisenberg group over $\Q_p$, in turn defined as \[
\mathbb{H}_{d}(\Q_p)= \left\{
  \begin{bmatrix}
    1 & x^t & z \\
    0 & I_{d} & y \\
    0 & 0 & 1 
  \end{bmatrix}\in GL_{d+2}(\Q_p) \esp : \esp x , y \in \Q_p^d, \esp z \in \Q_p \right\}. 
\]
$\mathbb{H}_{d}$ is an analytic $d$-dimensional manifold locally homeomorphic to $\Q_p^{2d+1}$ and the operations on $\mathbb{H}_{d}$ are analytic functions so $\mathbb{H}_{d}$ is a $p$-adic Lie group. Let us denote by $\mathfrak{h}_{d}$ its associated Lie algebra. We can write explicitly \[
\mathfrak{h}_{d}(\Q_p)= \left\{
  \begin{bmatrix}
    0 & \textbf{a}^t & c \\
    0 & 0_{d} & \textbf{b} \\
    0 & 0 & 0 
  \end{bmatrix}\in \mathcal{M}_{d+2}(\Q_p) \esp : \esp \textbf{a} , \textbf{b} \in \Q_p^d, c \in \Q_p \right\}, 
\]which is a nilpotent Lie algebra over $\Q_p$. This implies that the global convergence of the exponential map, and for any $u \in \mathfrak{h}_d$, let us say \[u:= \begin{bmatrix}
    0 & \textbf{a}^t & c \\
    0 & 0_{d} & \textbf{b} \\
    0 & 0 & 0 
  \end{bmatrix},\]the exponential map evaluates to \[\mathbb{exp} (u) = \begin{bmatrix}
    1 & \textbf{a}^t & c + \frac{1}{2} \textbf{a} \cdot \textbf{b} \\
    0 & I_{d} & \textbf{b} \\
    0 & 0 & 1 
  \end{bmatrix}.\]The exponential map transform sub-ideals of the Lie algebra $\mathfrak{h}_{d} $ to subgroups of $\mathbb{H}_{d}$. Actually, we can turn the exponential map into a group homomorphism by using the Baker–Campbell–Hausdorff formula. Let us define the operation ``$\diamond$" on $\mathfrak{h}_{d} $ by $$X \diamond Y:= X + Y + \frac{1}{2} [X,Y].$$ Then, clearly $ ( \mathbb{H}_d (\Q_p), \times) \cong (\mathfrak{h}_{d} (\Q_p) , \diamond )$, where "$\times$" is the matrix product, is a locally profinite topological group, and it can be endowed with the sequence of subgroups $G_n := ( \mathbb{exp}( p^n \mathfrak{h}_{d}(\Z_p)), \times) \cong (p^n \mathfrak{h}_{d}(\Z_p) , \diamond)$, where $$p^n \mathfrak{h}_{d}(\Z_p)= p^n \Z_p X_1 +...+ p^n \Z_p X_d + p^n \Z_p Y_1 + ...+ p^n\Z_p Y_{d} + p^{2n} \Z_p Z,$$so $(\mathfrak{h}_{d} (\Q_p) , *) \cong \mathbb{H}_d (\Q_p)$ is a locally compact Vilenkin group. Here $X_1,...,Y_d,Z$ are the usual $\Q_p$-basis for $\mathfrak{h}_d$. An alternative  definition for $\mathbb{H}_d (\Q_p)$ can be given in terms of symplectic vector spaces. Let $V$ be a $2d$-dimensional vector space over the field $F=\Q_p$ so that $V \cong \mathbb{Q}_p^{2d}$. Let $\langle , \rangle_V$ be a non-degenerated symplectic form. The Heisenberg group associated to $V$ is the group with underlying set $V\times F$ and the following group operation: $$(x,y,z) \cdot (x',y',z') := (x+ x' , y + y' , z + z' + \langle (x,y) , (x',y' ) \rangle , \esp \esp \esp (x,y), (x',y') \in V\cong \Q_p^d \times \Q_p^d.$$One can see that the previous definition and the initial definition are equivalent in the sense they define isomorphic groups.    
\end{exa}

\begin{exa}\label{exapadicnilpotentgroups}\normalfont
More generally, let $p$ be a prime number, $p\neq 2$, and let $\K$ be nonarchimedean local field with ring of integers $\mathcal{O}_\K$, prime ideal $\p = \mathscr{p} \mathcal{O}_\K$ and residue field $\mathcal{O}_\K / \p \cong \mathbb{F}_q$, where $q= p^{[\K : \Q_p]}$. Let $\mathfrak{g}$ be a nilpotent $\K$-Lie algebra. We say that $\mathfrak{g}$ is \emph{graded} if there exists a decomposition of the form $$\mathfrak{g} = \bigoplus_{k=1}^{r'} V_{k}, \esp \esp \esp [V_i , V_j]\subseteq V_{i+j}.$$Let $V_{\nu_1},...,V_{\nu_r}$ the non-trivial subspaces appearing in the above decomposition, so that $$\mathfrak{g} = \bigoplus_{k=1}^{r} V_{\nu_k}, \esp \esp b_k := dim(V_{\nu_k}).$$We will call the number $Q:=\nu_1 b_1 + \nu_2 b_2 + ...+ \nu_r b_r$ the homogeneous dimension of $\mathfrak{g}$. In particular, if the elements of the first subspace appearing in the decomposition of $\mathfrak{g}$ generate the whole algebra, we say that $\mathfrak{g}$ is \emph{stratified}. If the Lie algebra of a nilpotent Lie group $G$ is graded, we say that $G$ is a \emph{graded Lie group}. When the Lie algebra is stratified we say that $G$ is a \emph{stratified group}.
A consequence of the graded structure on a $\K$-Lie algebra is the existence of dilations on the group. That is, there exists a family of linear mappings $D_\gamma : \mathfrak{g} \to \mathfrak{g}$, $\gamma \in \K^*$, satisfying :
\begin{itemize}
        \item[-]the mappings $D_\gamma$ are diagonalisable and each $V_j$ is the eigenspace associated to one of the eigenvalues of $D_\gamma$.
        \item[-] each $D_\gamma$ is a morphism of the Lie algebra $\mathfrak{g}$, that is, a linear mapping from $\mathfrak{g}$ to itself which respects the Lie bracket: $$[D_\gamma X , D_\gamma Y] = D_\gamma [X, Y], \esp \esp \text{for all} \esp X, Y \in \mathfrak{g}, \esp \esp \text{and all} \esp \gamma \in \K^*.$$
\end{itemize}
The dilations are transported to the Lie group by the exponential map in the following way: the maps $$\mathbb{exp} \esp  \circ D_\gamma \circ \esp  \mathbb{exp}^{-1}, \esp \esp \gamma \in \K^*  ,$$are automorphisms of the group $G$. We will also denote by $D_\gamma$ the dilations on the group. We may write $$\gamma x := D_\gamma (x).$$
                     
A graded group $G$ can be endowed with the structure of a locally compact Vilenkin group with a sequence compatible with the dilations. For doing so let $X_1 , ...,X_d$ be a $\K$-basis of eigenvectors in $\mathfrak{g}$ associated to the dilations $D_\gamma$ so $D_\gamma$ is diagonal in this basis. Define $$\mathfrak{g}_0 := \mathcal{O}_\K X_1 + ... + \mathcal{O}_\K X_d, \esp \esp \text{and} \esp \esp G_0 := \mathbb{exp}(\mathcal{O}_\K X_1 + ... + \mathcal{O}_\K X_d). $$Hence, if $|\gamma|_\K = q^{-n}$, $$G_n := \mathbb{exp}( \gamma \mathfrak{g}_0) = D_\gamma ( G_0)  , \esp \esp  \gamma \mathfrak{g}_0 =  \mathscr{p}^{n \nu_1 } \mathcal{O}_\K X_1 + ... + \mathscr{p}^{n \nu_r} \mathcal{O}_\K X_d = D_{\gamma} (\mathfrak{g}_0),$$defines a sequence $\{ G_n \}_{n \in \Z}$ of compact open sub-groups of $G$ such that $$G= \bigcup_{n \in \Z} G_n , \esp \esp \esp \text{and} \esp \esp \esp \bigcap_{n \in \Z} G_n = \{ e \}.$$
With the above filtration $G$ is a constant Vilenkin group since \begin{align*}
    | G_n / G_{n+1}| & = | \mathfrak{g}_n / \mathfrak{g}_{n+1}| \\ & = q^{\nu_1 b_1 + ... + \nu_r b_r} \\ &= q^{Q},
\end{align*} for all $n \in \Z$. For instance in the case $\K = \Q_p$ and $G= \mathbb{H}_d (\Q_p)$ we can define the dilations $$D_\gamma (x, y ,z) := \mathbb{exp}( \gamma x_1 X_1 + ... \gamma x_d X_d + \gamma y_1 Y_1 + ...+ \gamma y_d Y_d + \gamma^2 z Z) = (\gamma x, \gamma y , \gamma^2 z),$$and the sequence of subgroups associated to this dilations, the same defined in Example \ref{exapadicheisenberg}, make $\mathbb{H}_d (\Q_p)$ a constant-order Vilenkin group since $$| G_n / G_{n+1}|=p^{2d+2}, \esp \esp \text{for all}\esp \esp n \in \Z.$$
\end{exa}

\begin{exa}\label{exaEngelalgebra}\normalfont
Consider the $4$-dimensional $p$-adic Lie group $\mathbb{E}_4 (\Q_p)$, or $\mathbb{E}_4$ for simplicity, which is defined here as the exponential image of the filiform $\Q_p$-Lie algebra $\mathfrak{e}_4$, called by some authors the Engel algebra, defined in terms of the $\Q_p$-basis $\{X , Y_1 , Y_2 , Y_3\}$ and the commutation relations $$[X, Y_i] = Y_{i+1}, \esp \esp i=1,2, \esp \esp \text{and} \esp \esp \esp \esp [X,Y_3] =0.$$We can think on $\mathfrak{e}_4$ as the matrix algebra containing all the matrices of the form  \[ \begin{bmatrix}
    0 & x & 0 & y_3 \\
    0 & 0 & x &  y_2  \\
    0 & 0 & 0 & y_1 \\ 
    0 & 0 & 0 & 0
  \end{bmatrix}, \esp \esp \esp x , y_1 , y_2 , y_3 \in \Q_p.\]In this way $\mathbb{E}_4$, which is the exponential image of the $\mathfrak{e}_4$, is a nilpotent subgroup of $GL_4 (\Q_p)$ that we call here \emph{the Engel group}. With the coordinates $$(x,y_1,y_2,y_3):= \mathbb{exp}( xX +y_1 Y_1 + y_2 Y_2 + y_3Y_3),$$we can identify $\mathbb{E}_4$ with $\Q_p^4$ endowed with the group law: \begin{align*}
      (x,y_1,y_2 , y_3) \times (x',y_1',y_2' , y_3'):= (x + x',y_1 + y_1',y_2 +y_2' -x y_1',y_3 +  y_3' + \frac{1}{2} x^2 y_1' -x y_2').
  \end{align*}Denote by "$\diamond$" the BCH product on $\mathfrak{e}_4$. Then $\mathbb{E}_4 \cong (\mathfrak{e}_4, \diamond)$ is a constant-order locally compact Vilenkin group since it can be endowed with the sequence of compact open subgroups $(G_n , \times) \cong (p^n \mathfrak{e}_4 , \diamond) $ where $$G_n:= \mathbb{exp}( p^n \mathfrak{e}_4) := \mathbb{exp}( p^n \Z_p X + p^n \Z_p Y_1 + p^{2n} \Z_p Y_2 + p^{3n} \Z_p Y_3) ,$$and it clearly holds $|G_n / G_{n+1}| = p^7$ for all $n \in \Z$. Here $Q=7$ is the homogeneous dimension of $\mathbb{E}_4$.   
\end{exa}
There are two ways to generalise the Vladimirov operator defined in Example \ref{exapadicnumbers} to the multidimensional case. The first one is the Taibleson operator, or the Vladimirov-Taibleson operator, a very special pseudo-differential operator with many works about its properties in the mathematical literature. It is defined via the formula $$\mathscr{D}^\alpha f (x):=\frac{1-p^\alpha}{1- p^{-\alpha - d}} \int_{\Q_p^d} \frac{f(x-y) - f(x)}{\|y\|_p^{\alpha +d}} dy, \esp \esp \esp \mathfrak{Re}(\alpha)>0,$$or equivalently via Fourier transform as $$\mathscr{D}^\alpha f (x):= \int_{\Q_p^d} \| \xi \|_p^\alpha \widehat{f}(\xi) e^{2 \pi i \{ x \cdot \xi \}_p} d \xi, $$ acting initially, as before, on locally constant functions with compact support. This operator is an invariant operator on $\Q_p^d$ whose convolution kernel, called the \emph{mulidimensional Riesz kernel}, is given by the expression $$\langle \mathfrak{r}_\alpha , f \rangle = \frac{1-p^{-n}}{1-p^{\alpha -n}} f(0) + \frac{1-p^{-\alpha}}{1-p^{\alpha-n}}\int_{\|x \|_p>1} \| x\|_p^{\alpha - n} f(x) dx + \frac{1-p^{-\alpha }}{1-p^{\alpha - n}} \int_{\| x\|_p \leq 1}\| x\|_p^{\alpha - n} (f(x)-f(0)) dx,$$so that for $\mathfrak{Re}(\alpha) >0$ and any locally constant function $f$ with compact support it holds $\mathscr{D}^\alpha f = f * \mathfrak{r}_{-\alpha}$. See \cite{RodriguezJ} for more details. Here we are using the notation $$\|x\|_p=\max_{1 \leq k \leq   d} |x_i|_p.$$The heat equation with the Vladimirov-Taibleson operator, or diffusion equation with $\mathscr{D}^\alpha$, is the parabolic equation \begin{equation}\label{heateq}
    \frac{d}{dt} u(t,x) + \mathscr{D}^\alpha u (t,x)=0, \esp \esp t>0, \esp \esp x \in \Q_p^d,
\end{equation}and its applications to mathemathical physics and other areas has been widely discussed in the literature. For example the reader can consult \cite{avetisov} where the connections between this and more general convolution operators with ultrametric diffusion models are explored. From the mathematical point of view, Equation (\ref{heateq}) is interesting on its own and its fundamental solution, usually called \emph{the heat kernel} $h_\alpha (t,x)$ associated to $\mathscr{D^\alpha}$, has some remarkable properties, for instance it is a positive function that defines a transition density of a right continuous strict Markov process without second kind discontinuities \cite{RodriguezJ}. Another remarkable property of this kernel is the estimate $$h_\alpha (t,x) \asymp \frac{t}{(t^{1/\alpha} + \| x \|_p)^{\alpha+d}}, \esp \esp \esp \text{for} \esp \alpha >0,$$which in principle guaranties that for $ 0< \alpha < d$ we can obtain the fundamental solution of $\mathscr{D}^\alpha$ by integrating $h_\alpha (t,x)$ in the variable $t$, as it is proven in \cite{IsoMarkovSemi}. Anyway, there are more general methods to obtain explicitly the fundamental solution for $\mathscr{D}^\alpha$, which is actually given by \[E_\alpha (x) = \begin{cases}
\frac{1 - p^{-\alpha}}{1 - p^{\alpha -d }} \| x\|_p^{\alpha -d}, & \esp \esp \text{if} \esp \mathfrak{Re}(\alpha) \neq d, \\ \frac{1 - p^n}{p^n \ln{ p } } \ln{\|x \|_p}, & \esp \esp \text{if} \esp \mathfrak{Re}(\alpha)=d ,
\end{cases}
\]but it is important to remark that for the case $\mathfrak{Re}( \alpha) \geq d$ one obtains in principle just a \emph{formal fundamental solution}, meaning that the linear functional $E_\alpha$ is not well defined for every locally constant function with compact support. The reason is that via Fourier transform $\mathcal{F}_{\Q_p^d} E_\alpha (\xi) = \| \xi \|_p^{-\alpha}$ but the function $\| \xi \|_p^{-\alpha}$ is not locally integrable for $\mathfrak{Re}(\alpha) \geq d$. See \cite{RodriguezJ, fundsoltaibleson} for more details about the Taibleson operator and its fundamental solution.

There is a natural way to generalise the above ideas to constant-order Vilenkin groups. Consider $G= \Q_p^d$ together with the sequence of compact open subgroups $$G_n:=\{x \in \Q_p \esp : \|x\|_p \leq p^{-n}\}=p^n \Z_p^d.$$ Then $\Q_p^d$ is a constant-order Vilenkin group and for the metric associated to $\mathscr{G}=\{G_n\}_{n \in \Z}$ it holds $|x-y|_{\mathscr{G}}= \|x-y\|_p^d$. Accordingly, we can write the Taibleson operator in terms of the ultrametric $|\cdot|_{\mathscr{G}} $ as:
$$\mathscr{D}^\alpha f (x):=\frac{1-p^{d(\alpha/d)}}{1- p^{-d(\alpha/d + 1)}} \int_{\Q_p^d} \frac{f(x-y) - f(x)}{|y|_{\mathscr{G}}^{\alpha/d +1}} dx.$$ So actually, the Taibleson operator is a particular example of a class of operators on ultrametric spaces defined in terms a certain ultrametric function \cite{IsoMarkovSemi} that, in particular for Vilenkin groups, we choose as the ultrametric associated to a sequence of compact open subgroups.  Hence we can "extend" the study of the Vladimirov-Taibleson operator, in principle, to constant order Vilenkin groups simply by replacing in the original definition the $p$-adic norm with the natural ultrametric of the group.  
\begin{defi}\label{defivtoperator}
Let $G$ be a constant-order Vilenkin group, endowed with the sequence of compact open subgroups $\mathscr{G} = \{G_n\}_{n \in \Z}$, let us write $|G_n/G_{n+1}|=\varkappa$ for all $n \in \Z$. We define for $\mathfrak{Re}(\alpha)>0$ the Vladimirov-Taibleson operator $D^\alpha$, acting initially on locally constant functions with compact support, via the formula:
$$D^\alpha f (x) := \frac{1-\varkappa^{\alpha}}{1 -\varkappa^{-\alpha -1}}  \int_{G} \frac{f(xy^{-1})-f(x)}{|y|_{\mathscr{G}}
^{-\alpha - 1}}dx.$$
\end{defi}

One of our contributions in this paper is to compute the fundamental solution for the above defined operator on constant-order noncommutative Vilenkin groups, extending in this way to the noncommutative case the well known results for the Vladimirov-Taibleson operator on local fields and abelian Vilenkin groups. 

\begin{teo}\label{teofundamentalsolVTopLC}
A fundamental solution for the Vladimirov-Taibleson operator $D^\alpha$, $\mathfrak{Re}(\alpha)>0 , \mathfrak{Im}(\alpha) \notin (2 \pi i / \ln \varkappa) \Z$, $\mathfrak{Re}(\alpha) \neq 1$, is given by $$E_\alpha (x) =  \frac{1 - \varkappa^{-\alpha}}{1 - \varkappa^{\alpha -1 }} | x|_\mathscr{G}^{\alpha -1} .$$
\end{teo}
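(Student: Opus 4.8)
The plan is to verify directly that $D^\alpha E_\alpha = \delta_e$ as distributions on $G$, by reducing everything to the spectral/Fourier picture where $D^\alpha$ acts by multiplication by $|\xi|^\alpha$ in the appropriate sense. Since $G$ is noncommutative we cannot literally use a Fourier transform valued in functions on a dual group, but the key structural fact is that $D^\alpha$ is a \emph{convolution operator} built only from the radial function $y\mapsto |y|_{\mathscr G}^{-\alpha-1}$, hence it is bi-invariant, and the candidate $E_\alpha$ is radial as well. So the first step is to observe that the whole computation takes place inside the commutative convolution algebra of radial (i.e.\ $G_n$-bi-invariant for all $n$) distributions on $G$, which, because $|G_n/G_{n+1}|=\varkappa$ is constant, is canonically isomorphic to the corresponding algebra on $\Q_p$ (or on any local field with residue field of size $\varkappa$) under the map that matches up the sphere functions $\1_{G_n\setminus G_{n+1}}$. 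Under this identification $D^\alpha$ becomes exactly the classical one-dimensional Vladimirov operator and $E_\alpha(x)=\frac{1-\varkappa^{-\alpha}}{1-\varkappa^{\alpha-1}}|x|_{\mathscr G}^{\alpha-1}$ becomes the classical Riesz kernel, so the statement follows from the known abelian result recalled in the Introduction (cf.\ Example \ref{exapadicnumbers} and \cite{RodriguezJ,fundsoltaibleson}).

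If one prefers a self-contained computation rather than quoting the abelian case, the second route is to test against an arbitrary locally constant compactly supported $f$ and compute $\langle D^\alpha E_\alpha, f\rangle = \langle E_\alpha, D^\alpha f\rangle$ using self-adjointness of $D^\alpha$ (immediate from unimodularity and the symmetry $|y|_{\mathscr G}=|y^{-1}|_{\mathscr G}$), then expand $D^\alpha f$ via Definition \ref{defivtoperator} and integrate against $E_\alpha$. The decisive step is the evaluation of the radial integrals $\int_{|y|_{\mathscr G}=\varkappa^{-n}} dy = \varkappa^{-n}(1-\varkappa^{-1})$ for $n\in\Z$, together with the telescoping geometric series in $n$ produced by the factor $|y|_{\mathscr G}^{\alpha-1}$ against $|z|_{\mathscr G}^{-\alpha-1}$; here the hypotheses $\mathfrak{Re}(\alpha)>0$ ensures convergence near $e$ of the defining integral for $D^\alpha f$, $\mathfrak{Re}(\alpha)\neq 1$ ensures convergence at infinity of the integral defining $\langle E_\alpha,\cdot\rangle$ on the constant part of $f$ (equivalently, that $|\xi|^{-\alpha}$ pairs sensibly), and $\mathfrak{Im}(\alpha)\notin(2\pi i/\ln\varkappa)\Z$ guarantees $1-\varkappa^{\alpha-1}\neq 0$ so the normalising constant is well defined and the geometric sums do not degenerate. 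One checks that after the dust settles all terms cancel except $f(e)$, giving $\langle D^\alpha E_\alpha,f\rangle=f(e)$.

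The main obstacle, and the only place noncommutativity could in principle bite, is justifying that no cross-terms survive: in the abelian case one freely uses $x\mapsto x-y$ substitutions and the translation invariance of Haar measure, and here one must instead be careful that $D^\alpha(f(x\,\cdot))$ and the right translates interact correctly, i.e.\ that the convolution $E_\alpha * \mathfrak r_{-\alpha}$ is associative at the level of distributions even though $\mathfrak r_{-\alpha}$ is not integrable near $e$. I would handle this by the standard regularisation: replace $\mathfrak r_{-\alpha}$ by its restriction to $|y|_{\mathscr G}\geq \varkappa^{N}$, obtaining honest $L^1_{\mathrm{loc}}$ convolutions for which Fubini and bi-invariance of Haar measure apply verbatim, then let $N\to-\infty$ and use $\mathfrak{Re}(\alpha)>0$ to control the error. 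Once the computation is organised this way it is formally identical to the one-dimensional abelian computation, which is why the constant-order hypothesis (making all the local structure isomorphic to that of $\Q_p$ with residue field size $\varkappa$) is exactly what the theorem needs.
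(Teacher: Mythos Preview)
Your proposal is correct but takes a genuinely different route from the paper.

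The paper argues in two stages: first it proves the compact-group analogue (Theorem~\ref{teofundamentalsolVTop}) on each $G_l$ for the modified operator $\mathbb{D}_l^\alpha$, using the noncommutative Peter--Weyl decomposition to identify the Fourier symbol of the Riesz kernel as $\langle\xi\rangle_{\mathscr G_l}^{\alpha} I_{d_\xi}$ and recovering $E_\alpha$ as the zeroth Laurent coefficient of $\langle\xi\rangle^{s}$ at $s=-\alpha$; second, for locally compact $G$, it splits the integral defining $D^\alpha f$ (with $f\in\mathcal D(G_n)$) according to whether $x,y\in G_l$, obtains $D^\alpha f=\varkappa^{l\alpha}\mathbb{D}_l^\alpha f$ plus a correction vanishing on $\tilde{\mathcal D}(G_n)$ and tending to $0$ as $l\to-\infty$, and rescales metric and measure from $G$ to $G_l$ to reduce $(D^\alpha f)*E_\alpha$ to the compact case already settled. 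Your Route~1 sidesteps this machinery by observing that both the kernel of $D^\alpha$ and the candidate $E_\alpha$ are radial, that the radial convolution algebra generated by $\{\1_{G_n}\}_{n\in\Z}$ is commutative with structure constants depending only on $\varkappa$, and hence that the identity $D^\alpha * E_\alpha=\delta_e$ transports verbatim from the abelian constant-order Vilenkin case. This is shorter and more conceptual; its cost is that it imports the abelian theorem as a black box and does not build the noncommutative symbol calculus the paper later uses for $\mathbb{H}_d$ and $\mathbb{E}_4$. One small caveat: your phrase ``local field with residue field of size $\varkappa$'' presumes $\varkappa$ is a prime power; what you actually need is any abelian Vilenkin group of constant order $\varkappa$, which always exists.
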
 

Observe that the given definition of the Vladimirov-Taibleson operator and its fundamental solution depend on the choice of ultrametric on the group, so that the definition depends on the choice a sequence of compact open subgroups $\mathscr{G}$, implying that there can be several definitions of this operator for the same group. However in some cases there is a natural choice of ultrametric, like for $\Q_p^d$ where the definition is given in terms of the $p$-adic norm, and more generally the same occurs for graded $\K$-Lie groups where there is a natural and convenient definition of Vladimirov-Taibleson operator in terms of an homogeneous ultrametric function. A modification we need to do with respect to the definition previously given is that for graded $\K$-Lie groups we want to work with an specific choice of distance function on the group. The reason for this choice is that we want our distance function to be an homogeneous quasi-norm in the sense of the following definition:
\begin{defi}\label{defihomoquasinorm}\normalfont
Let $G$ be a graded $\K$-Lie group together with the dilations $D_\gamma, \gamma \in \K^*$. An \emph{homogeneous quasi-norm} is a continous non-negative function $| \cdot| : G \to [0 , \infty)$ satisfying 
\begin{itemize}
    \item $|x^{-1}| = |x|$ for all $x \in G$,
    \item $|\gamma x| = | \gamma |_\K |x| $ for all $x \in G$ and $\gamma \in \K^*$,
    \item $|x|=0$ if and only if $x = e$.
\end{itemize}
\end{defi}
\begin{defi}\label{defimetricgradedgroups}\normalfont
Let $G$ be a graded $\K$-Lie group with homogeneous dimension $Q$, considered as a Vilenkin group with the sequence of subgroups $\mathscr{G}=\{ G_n\}_{n \in \Z}$ determined by the dilations on the algebra. We define the distance function $| \cdot |_G$ on $G$ by  \[ = |x y^{-1}|_{G} :=\begin{cases} 0 & \esp \esp \text{if} \esp x=y, \\ |x y^{-1}|_\mathscr{G}^{1/Q}  & \esp \esp \text{if} \esp x y^{-1} \in G_n \setminus G_{n+1}.\end{cases}\]
\end{defi}
\begin{rem}
The above defined metric is an homogeneous quasi-norm in the sense of Definition \ref{defihomoquasinorm}. To see this take $xy^{-1} \in G_n \setminus G_{n+1}$ and $\gamma \in \K^*$ with $|\gamma|_\K = q^{-k}$. Thus $\gamma(xy^{-1}) \in G_{n+k} \setminus G_{n+k+1}$ so $|\gamma(xy^{-1})|_{\mathscr{G}}=q^{-Q(n+k)} = |\gamma|_\K^{Q} |xy^{-1}|_\mathscr{G}$.
\end{rem}

Considering the above, the definition of Vladimirov-Taibleson operator that we will use on graded $\K$-Lie groups is the following:

\begin{defi}\label{defiVToperatorGradedlie}\normalfont
Let $G$ be a graded $\K$-Lie group with homogeneous dimension $Q$. Take $\alpha \in \C$ with $\mathfrak{Re}(\alpha)>0$. We define the Vladimirov-Taibleson operator on $G$ as the linear operator acting initially on locally constant functions with compact support according to the following formula: $$\mathscr{D}^\alpha f(x) := \frac{1 - q^\alpha}{1 - q^{- (\alpha + Q)}} \int_G \frac{f(xy^{-1}) - f(x)}{|y|_G^{ \alpha + Q}} dy, \esp \esp x \in G.$$
\end{defi}
And for this operator its fundamental solution is:
\begin{teo}\label{fundamentalsolutiongradedliegroups}
A fundamental solution for $\mathscr{D}^\alpha$, $\mathfrak{Re}(\alpha)>0 , \mathfrak{Im}(\alpha) \notin (2 \pi i / \ln q) \Z$, $\mathfrak{Re}(\alpha) \neq Q$ is given by $$E_\alpha (x) =  \frac{1 - q^{-\alpha}}{1 - q^{\alpha -Q }} | x|_G^{\alpha -Q}  \esp \esp  \esp \mathfrak{Re}(\alpha) \neq Q.$$
\end{teo}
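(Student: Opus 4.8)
The plan is to deduce Theorem~\ref{fundamentalsolutiongradedliegroups} directly from Theorem~\ref{teofundamentalsolVTopLC} by recognizing a graded $\K$-Lie group, equipped with the filtration $\mathscr{G}=\{G_n\}_{n\in\Z}$ coming from the dilations, as a constant-order Vilenkin group, and by carefully tracking how the homogeneous quasi-norm $|\cdot|_G$ of Definition~\ref{defimetricgradedgroups} is built from the intrinsic Vilenkin metric $|\cdot|_\mathscr{G}$. First I would record the structural facts already assembled in Example~\ref{exapadicnilpotentgroups}: with $\mathscr{G}$ given by the dilations one has $|G_n/G_{n+1}|=q^{Q}$ for every $n\in\Z$, so that $G$ is a constant-order Vilenkin group with ratio $\varkappa=q^{Q}$; and by Definition~\ref{defimetricgradedgroups} the two distance functions are related by $|x|_G=|x|_\mathscr{G}^{1/Q}$, equivalently $|x|_\mathscr{G}=|x|_G^{\,Q}$.

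Next I would substitute this relation into Definition~\ref{defiVToperatorGradedlie}. Using $|y|_G^{\alpha+Q}=|y|_\mathscr{G}^{(\alpha+Q)/Q}=|y|_\mathscr{G}^{\,\alpha/Q+1}$ together with $q^{\alpha}=\varkappa^{\alpha/Q}$ and $q^{\alpha+Q}=\varkappa^{\alpha/Q+1}$, the operator $\mathscr{D}^\alpha$ on the graded group becomes, term by term (both the normalizing constant and the convolution kernel), exactly the operator $D^{\beta}$ of Definition~\ref{defivtoperator} with $\beta=\alpha/Q$ and $\varkappa=q^{Q}$. In other words $\mathscr{D}^\alpha=D^{\alpha/Q}$ as operators on locally constant compactly supported functions on $G$; this is the only computation in the argument and it is purely formal.

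Then I would invoke Theorem~\ref{teofundamentalsolVTopLC} with $\beta=\alpha/Q$. Since $\ln\varkappa=Q\ln q$, its hypotheses $\mathfrak{Re}(\beta)>0$, $\mathfrak{Re}(\beta)\neq1$, $\mathfrak{Im}(\beta)\notin(2\pi i/\ln\varkappa)\Z$ translate precisely into $\mathfrak{Re}(\alpha)>0$, $\mathfrak{Re}(\alpha)\neq Q$, $\mathfrak{Im}(\alpha)\notin(2\pi i/\ln q)\Z$, which are exactly the hypotheses of the present theorem. The conclusion of Theorem~\ref{teofundamentalsolVTopLC} is that $E_{\alpha/Q}(x)=\frac{1-\varkappa^{-\alpha/Q}}{1-\varkappa^{\alpha/Q-1}}\,|x|_\mathscr{G}^{\,\alpha/Q-1}$ is a fundamental solution for $D^{\alpha/Q}=\mathscr{D}^\alpha$; rewriting $\varkappa^{-\alpha/Q}=q^{-\alpha}$, $\varkappa^{\alpha/Q-1}=q^{\alpha-Q}$ and $|x|_\mathscr{G}^{\,\alpha/Q-1}=|x|_G^{\,\alpha-Q}$ yields the asserted formula $E_\alpha(x)=\frac{1-q^{-\alpha}}{1-q^{\alpha-Q}}|x|_G^{\,\alpha-Q}$.

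There is no genuine obstacle here; the one point to be careful about is conceptual rather than technical. The notion of a fundamental solution — a distribution $E_\alpha$ on locally constant compactly supported functions satisfying $\mathscr{D}^\alpha E_\alpha=\delta_e$ — is intrinsic to the operator and the test-function space, and does not depend on which admissible metric and normalization we use merely to \emph{express} the operator; since $\mathscr{D}^\alpha$ and $D^{\alpha/Q}$ are literally the same operator on the same space, the distribution produced by Theorem~\ref{teofundamentalsolVTopLC} is automatically a fundamental solution for $\mathscr{D}^\alpha$, and the remaining task is only to verify that the two closed-form expressions agree, which is the elementary identity above. Finally, exactly as in the abelian case discussed after Definition~\ref{defivtoperator}, one should note that for $\mathfrak{Re}(\alpha)\geq Q$ the functional $E_\alpha$ is only a \emph{formal} fundamental solution, since $|x|_G^{\,\alpha-Q}$ fails to be locally integrable near $e$; this caveat is inherited verbatim from Theorem~\ref{teofundamentalsolVTopLC}.
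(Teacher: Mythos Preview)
Your reduction is correct and is exactly the approach the paper intends: the paper treats Theorem~\ref{fundamentalsolutiongradedliegroups} as a reformulation of Theorem~\ref{teofundamentalsolVTopLC} via the identifications $\varkappa=q^{Q}$, $|x|_G=|x|_\mathscr{G}^{1/Q}$ set up in Example~\ref{exapadicnilpotentgroups} and Definition~\ref{defimetricgradedgroups}, and your substitution $\mathscr{D}^\alpha=D^{\alpha/Q}$ together with the translation of hypotheses and conclusion carries this out verbatim.

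One small slip in your closing remark: for $\mathfrak{Re}(\alpha)>Q$ the function $|x|_G^{\alpha-Q}$ has positive real exponent and is perfectly locally integrable near $e$; the ``formal'' caveat in the paper refers to the Fourier side (the symbol $\|\xi\|_p^{-\alpha}$ failing to be locally integrable on $\Q_p^d$ when $\mathfrak{Re}(\alpha)\geq d$), not to $E_\alpha$ itself being non-integrable near the identity. This does not affect your proof, only the phrasing of the final parenthetical comment.
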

\begin{rem}
The Vladimirov-Taibleson $\mathscr{D}^\alpha$ is an $\alpha$-homogeneous operator since \begin{align*}
    \mathscr{D}^\alpha (f \circ D_\gamma) (x) &= \frac{1 - q^\alpha}{1 - q^{- (\alpha + Q)}} \int_G \frac{f(\gamma(xy^{-1})) - f(\gamma x)}{|y|_G^{ \alpha + Q}} dy \\ &= \frac{1 - q^\alpha}{1 - q^{- (\alpha + Q)}} |\gamma|_\K^{\alpha + Q} \int_G \frac{f(\gamma(xy^{-1})) - f(\gamma x)}{|\gamma y|_G^{ \alpha + Q}} dy \\ &=\frac{1 - q^\alpha}{1 - q^{- (\alpha + Q)}} |\gamma|_\K^{\alpha } \int_G \frac{f((\gamma x)y^{-1}) - f( \gamma x)}{| y|_G^{ \alpha + Q}} dy   \\&= |\gamma|_\K^\alpha (\mathscr{D}^\alpha f) \circ D_\gamma (x). 
\end{align*}
We will come back to this fact and some of its implications on Section 2 where we study homogeneous operators like the "Vladimirov Laplacian" and the "Vladimirov Sub-Laplacian"  on the $p$-adic Heisenberg group.  
\end{rem}

\begin{rem}
 When $0<\alpha<Q$, there is an alternative way to obtain the heat kernel, its estimates, and the fundamental solution of the Vladimirov-Taibleson operator on graded $\K$-Lie groups, and more generally on constant-order Vilenkin groups, by using isotropic Markov semigroups and non-local Dirichlet forms, see \cite{DirichletForms, IsoMarkovSemi} for all the details. In those works the authors studied Markov semigroups on general ultrametric measure spaces and obtained some interesting estimates for the heat kernel. We want to remark that the same estimates hold true in particular for constant-order Vilenkin groups and graded $\K$-Lie groups, even thought the authors in \cite{DirichletForms, IsoMarkovSemi} do not point out this fact. We will give more details about it in Section 1.5. However, our approach is different in nature as it involves the representation theory of Vilenkin groups, and it also produces a more general result since we can get the fundamental solution for $\alpha \in \C$ with $Q \neq \mathfrak{Re}(\alpha) >0$, and even for $\mathfrak{Re}(\alpha)=Q$ when the group is compact.
\end{rem}
The second operator that generalise the Vladimirov operator to the multidimensional case is the \emph{Vladimirov Laplacian} $\MO^\alpha$, $\alpha>0$, introduced by Vladimirov in \cite{Vladimirov_1988} for the particular case $\alpha=2$. We define it here as the invariant $\alpha$-homogeneous operator acting on locally constant functions with compact support via the formula: $$ \MO^\alpha f(x) := \sum_{j=1}^d \partial^\alpha_{x_j} f(x),$$where $$\partial^\alpha_{x_j} f(x) : = \frac{1 - p^\alpha}{1 - p^{- (\alpha + 1)}}\int_{\Q_p} \frac{ f(x-x_j) - f(x)}{|x_j|_p^{\alpha + 1}} dx_j. $$The operator $\MO^\alpha$ was studied in \cite{IsoMarkovSemi} and the existence of its fundamental solution was proven for $ 0 < \alpha < d$. In addition, the following properties of $\MO^\alpha$ were established: 

\begin{itemize}
\item $(\MO^\alpha , \mathcal{D}(\Q_p^d))$ is a non-negative symmetric operator, where $\mathcal{D}(\Q_p^d)$ denote the collection of locally constant functions with compact support on $\Q_p^d$. 
\item $(\MO^\alpha , \mathcal{D}(\Q_p^d))$ admits a system of compactly supported eigenfunctions. Besides the operator is essentially self adjoint.
\item The semigroup $e^{-t \MO^\alpha}$ is symmeric and Markovian. It admits a heat kernel $h_{\MO^\alpha} $ that satisfies the following estimate  $$h_{\MO^\alpha_2} ( t ,x) \asymp t^{-d/\alpha} \prod_{j=1}^d \min\{1 , \frac{ t^{1 + 1/\alpha}}{ |x_j|_p^{1+\alpha}} \} .$$
\item The semigroup is transient if and only if $0 < \alpha< d$.
\end{itemize}
For more general graded $\K$-Lie groups the question we want to investigate is whether something similar occurs for an operator like $$\MO^\alpha :=\sum_{k=1}^r \sum_{j=1}^{b_k} \partial_{X_{k,j}}^{\alpha/\nu_k} ,$$where $\{ X_{k,j} \}$ is a Malcev basis associated to the gradation $$\mathfrak{g} = \bigoplus_{k=1}^{r} V_{\nu_k}, \esp \esp b_k := dim(V_{\nu_k}).$$Here we are using the notation introduced in the following definition:
\begin{defi}\normalfont
Let $G$ be a graded $\K$-Lie group with Lie algebra $\mathfrak{g}$. We define the \emph{directional VT operator} in the direction of $X \in \mathfrak{g}$ by the formula $$\partial_X^\alpha f (x) :=  \frac{1 - q^\alpha}{1 - q^{- (\alpha + 1)}} \int_{\K} \frac{f(x \cdot \mathbb{exp}(tX)^{-1}) - f(x)}{|t |_\K^{\alpha + 1}} dt .$$
\end{defi}

We call $\MO^\alpha$ the \emph{Vladimirov Laplacian} on $G$, and in particular for the Heisenberg group $\mathbb{H}_d$ it takes the form: $$\MO^\alpha = \sum_{j=1}^{d} \partial_{X_j}^\alpha + \partial_{Y_j}^\alpha + \partial_Z^{\alpha/2},$$and for the Engel group $\mathbb{E}_4$: $$\MO^\alpha = \partial_{X}^\alpha + \partial_{Y_1}^\alpha + \partial_{Y_2}^{\alpha/2} + \partial_{Y_3}^{\alpha/3}.$$ We will show the existence of a fundamental solution for $\MO^\alpha$ on $\mathbb{H}_d$ and $\mathbb{E}_4$ using the group Fourier analysis, and we will also obtain an estimate on their associated heat kernels as an application of the results in \cite{IsoMarkovSemi}. We collect our results in the following Theorem:
\begin{teo}\label{teoheatkernelpropertiesgroups}
Let $G$ be either $\mathbb{H}_d$ or $\mathbb{E}_4$. Denote by $\MO^\alpha_2$ the self-adjoint extension of $\MO^\alpha$ to $L^2 (G)$. Then the heat kernel $h_{\MO^\alpha_2} $ associated to the Vladimirov Laplacian $\MO^\alpha$ on $G$ has the following properties: 

\begin{itemize}
    \item[(i)] $h_{\MO^\alpha_2} (t, \cdot) *  h_{\MO^\alpha_2} (s, \cdot) = h_{\MO^\alpha_2} (t + s , \cdot )  $, for any $s,t>0$.
    \item[(ii)] $h_{\MO^\alpha_2} ( | \gamma |_p^\alpha t, D_\gamma (x)) = |\gamma|^{-Q}_p h_{\MO^\alpha_2} ( t , x)$, for all $x \in G$ and any $t>0$, $\gamma \in \Q_p^*$.
\item[(iii)] $h_{\MO^\alpha_2} ( t , x) = \overline{h_{\MO^\alpha_2} ( t , x^{-1})},$ for all $x \in G$.  
\item[(iv)] The heat semigroup $e^{- t \MO_2^\alpha}$ is symmetric and Markovian. Moreover, the following estimate holds for its kernel: $$h_{\MO^\alpha_2} ( t , x) \asymp t^{-Q/\alpha}.$$
\end{itemize}
\end{teo}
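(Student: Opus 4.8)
The overall plan is to realise $e^{-t\MO_2^\alpha}$ as a left‑invariant contraction semigroup acting by convolution with the sought kernel $h_{\MO_2^\alpha}(t,\cdot)$, and then to read off (i)--(iv) from, respectively, the semigroup law, the $\alpha$‑homogeneity of the generator, self‑adjointness, and the Dirichlet‑form structure combined with the heat‑kernel estimates of \cite{IsoMarkovSemi}. First I would fix the self‑adjoint extension. Each directional operator $\partial_{X_{k,j}}^{\alpha/\nu_k}$ is left‑invariant, and restricting $\pi\in\widehat G$ to the one‑parameter subgroup $\mathbb{exp}(\K X_{k,j})\cong(\K,+)$ and decomposing it into additive characters identifies its group‑Fourier symbol with a non‑negative self‑adjoint operator on $\mathcal H_\pi$ --- the exact analogue of the fact that the one‑dimensional Vladimirov operator has symbol $\xi\mapsto|\xi|_p^{\alpha/\nu_k}$. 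Summing over $k,j$ yields a non‑negative symbol $\sigma_{\MO^\alpha}(\pi)\ge 0$ for a.e.\ $\pi$, so $(\MO^\alpha,\mathcal D(G))$ is symmetric and non‑negative; since the one‑dimensional Vladimirov operators are essentially self‑adjoint on $\mathcal D(\K)$ and $\mathcal D(G)$ is a core, $\MO^\alpha$ admits a self‑adjoint extension $\MO_2^\alpha$, and by Hille--Yosida $e^{-t\MO_2^\alpha}$ is a strongly continuous contraction semigroup. Being a bounded function of a left‑invariant operator it commutes with left translations, so $e^{-t\MO_2^\alpha}f=f*h_{\MO_2^\alpha}(t,\cdot)$ with $\widehat{h_{\MO_2^\alpha}(t,\cdot)}(\pi)=e^{-t\sigma_{\MO^\alpha}(\pi)}$; that this kernel is an honest bounded integrable function will follow from the estimate in (iv) below (or, if one prefers, from ultracontractivity via a Nash inequality built from the form and the Ahlfors $Q$‑regularity $|B(x,r)|\asymp r^Q$ of $(G,|\cdot|_G)$).

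Property (i) is then the semigroup identity $e^{-t\MO_2^\alpha}e^{-s\MO_2^\alpha}=e^{-(t+s)\MO_2^\alpha}$ rewritten with convolution kernels, using associativity of convolution and the fact that $h_{\MO_2^\alpha}(t,\cdot)$ and $h_{\MO_2^\alpha}(s,\cdot)$ convolve commutatively because both are functions of $\MO_2^\alpha$. For (ii) one records, exactly as in the Remark following Theorem~\ref{fundamentalsolutiongradedliegroups}, that $\MO^\alpha$ is $\alpha$‑homogeneous: $D_\gamma$ scales $X_{k,j}\in V_{\nu_k}$ by $|\gamma|_p^{\nu_k}$, so the substitution $t\mapsto\gamma^{\nu_k}t$ in the integral defining $\partial_{X_{k,j}}^{\alpha/\nu_k}$ makes that term $\nu_k\cdot(\alpha/\nu_k)=\alpha$‑homogeneous, whence $\MO^\alpha(f\circ D_\gamma)=|\gamma|_p^\alpha(\MO^\alpha f)\circ D_\gamma$; consequently $e^{-t\MO_2^\alpha}(f\circ D_\gamma)=(e^{-|\gamma|_p^\alpha t\,\MO_2^\alpha}f)\circ D_\gamma$, and comparing the two convolution representations, using that $D_\gamma$ scales Haar measure by $|\gamma|_p^{Q}$, gives $h_{\MO_2^\alpha}(|\gamma|_p^\alpha t,D_\gamma x)=|\gamma|_p^{-Q}h_{\MO_2^\alpha}(t,x)$. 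Property (iii) follows from self‑adjointness of $\MO_2^\alpha$: then $e^{-t\MO_2^\alpha}$ is self‑adjoint on $L^2(G)$, and on the unimodular group $G$ a left‑invariant operator $f\mapsto f*k$ has adjoint $f\mapsto f*\tilde k$ with $\tilde k(x)=\overline{k(x^{-1})}$, so $h_{\MO_2^\alpha}(t,x)=\overline{h_{\MO_2^\alpha}(t,x^{-1})}$.

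For (iv), symmetry of $e^{-t\MO_2^\alpha}$ is (iii). The Markov property is obtained by writing the associated quadratic form as
$$\mathcal E(f,f)=\langle\MO^\alpha f,f\rangle_{L^2(G)}=\sum_{k,j} c_k\int_G\int_{\K}\frac{|f(x)-f(x\,\mathbb{exp}(tX_{k,j}))|^2}{|t|_\K^{1+\alpha/\nu_k}}\,dt\,dx,$$
a sum of non‑local Dirichlet forms, each decreased by normal contractions, hence a regular Dirichlet form (as $\mathcal D(G)$ is a core dense in $C_0(G)$); by the Beurling--Deny/Fukushima theory the semigroup is Markovian. For the kernel estimate I would pass to a Malcev (exponential) coordinate system $G\cong\K^{b_1}\times\cdots\times\K^{b_r}$, under which Haar measure becomes a product of additive Haar measures and each $\mathcal E_{X_{k,j}}$ has jump kernel supported on translates of the $(k,j)$‑th coordinate line, the Baker--Campbell--Hausdorff formula showing that the group law differs from the coordinatewise sum only by lower‑triangular polynomial corrections. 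This should make $\mathcal E$ two‑sidedly comparable to the model product form $\mathcal E_0$ of $\sum_{k,j}\partial_{x_{k,j}}^{\alpha/\nu_k}$ on $\K^{b_1}\times\cdots\times\K^{b_r}$, whose heat kernel is a product of one‑dimensional Vladimirov heat kernels; applying the one‑dimensional estimate from \cite{IsoMarkovSemi}, namely $h^{(1)}_{\alpha/\nu_k}(t,s)\asymp t^{-\nu_k/\alpha}\min\{1,\,t^{1+\nu_k/\alpha}|s|_\K^{-(1+\alpha/\nu_k)}\}$, and multiplying with $\sum_k\nu_k b_k=Q$, gives a product bound whose on‑diagonal value is $\asymp t^{-Q/\alpha}$. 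Comparability of Dirichlet forms then transfers this two‑sided estimate to $h_{\MO_2^\alpha}$, which in particular shows it is a bounded integrable function and closes the gap left at the start.

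The main obstacle is precisely this comparison step: one must show that right translation by $\mathbb{exp}(tX_{k,j})$, which in Malcev coordinates perturbs several components through the BCH corrections, alters neither the ultrametric ball structure of $(G,|\cdot|_G)$ nor the integrated jump energies beyond universal multiplicative constants, so that the product/one‑dimensional heat‑kernel estimates of \cite{IsoMarkovSemi} can genuinely be imported to the noncommutative $G$. A secondary technical point is the precise computation of the group‑Fourier symbol of $\partial_{X_{k,j}}^{\alpha/\nu_k}$ and the verification that $\mathcal D(G)$ is an operator core for $\MO_2^\alpha$ in this setting.
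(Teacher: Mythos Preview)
Your plan parallels the paper's: (i)--(iii) via the semigroup law, $\alpha$-homogeneity, and self-adjointness, and (iv) by appeal to \cite{IsoMarkovSemi}. The paper, however, does not argue abstractly. It computes the group-Fourier symbol of $\MO^\alpha$ \emph{explicitly} on the Schr\"odinger representations $\pi_\lambda$ of $\mathbb{H}_d$ (and their analogues $\pi_{\lambda,\mu}$ for $\mathbb{E}_4$), obtaining the $p$-adic Schr\"odinger operator $\sigma_{\MO^\alpha}(\lambda)=\sum_j(\partial_{u_j}^\alpha+|\lambda|_p^\alpha|u_j|_p^\alpha)+|\lambda|_p^{\alpha/2}$ on $L^2(\Q_p^d)$, and then invokes Vladimirov's spectral result (Lemma~\ref{lemap-adicschrodinger}) to get self-adjointness, positivity, discrete spectrum and compact inverse fibre by fibre. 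Items (i)--(iii) are then read off directly from the integral formula $h(t,x)=\int_{\Q_p^*}Tr[\pi_\lambda(x)e^{-t\sigma_{\MO^\alpha}(\lambda)}]\,|\lambda|_p^d\,d\lambda$. This buys concrete spectral control and bypasses your core and essential-self-adjointness questions entirely, at the price of being tied to these two specific groups; your abstract route would in principle extend to any graded $\K$-Lie group once those technical points are settled.

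For (iv) the paper, like you, simply cites \cite[Proposition~5.13]{IsoMarkovSemi}, which treats the product operator $\sum_j\partial_{x_j}^{\alpha_j}$ on $\Q_p^n$, and records the product-type estimate without carrying out a separate transfer argument; so the ``comparison step'' you correctly flag as the main obstacle is not made explicit in the paper either. One caution about your proposed route through it: two-sided comparability of Dirichlet forms yields comparable \emph{on-diagonal} heat-kernel decay (via Nash-type inequalities), but does not by itself propagate the full off-diagonal product bound pointwise. Since the downstream corollaries only need $h(t,x)\asymp t^{-Q/\alpha}$ on a fixed annulus $G_0\setminus G_1$ together with the scaling from (ii), the on-diagonal version is what is actually used, and for that your Dirichlet-form comparison is adequate.
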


Theorem \ref{teoheatkernelpropertiesgroups}, among other things, guaranties that for $0<\alpha<Q$,  the fundamental solution and the Riesz potential of the operators $\MO^\alpha$ is well defined.
\begin{coro}
Let $G$ be either $\mathbb{H}_d$ or $\mathbb{E}_4$. Let $0 < \alpha < Q.$ Then a fundamental solution for the Vladimirov Laplacian exists and it defines an $ \alpha - Q$-homogeneous function given by $$\textbf{h}_{\MO^\alpha_2}(x) := \int_0^{\infty} h_{\MO^\alpha_2} ( t, x) dt.$$
Consequently: $$\textbf{h}_{\MO^\alpha_2}(x ) \asymp |x|_G^{\alpha - Q}.$$ 
\end{coro}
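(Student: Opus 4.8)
The plan is to treat $\textbf{h}_{\MO^\alpha_2}$ as the Green kernel of the self-adjoint generator $\MO^\alpha_2$, i.e. the integral kernel of $\int_0^\infty e^{-t\MO^\alpha_2}\,dt=(\MO^\alpha_2)^{-1}$, and to deduce everything from Theorem~\ref{teoheatkernelpropertiesgroups}. First I would show that the defining integral converges off the identity. Splitting $\int_0^\infty h_{\MO^\alpha_2}(t,x)\,dt$ at $t=1$, the tail is dominated using $h_{\MO^\alpha_2}(t,x)\lesssim t^{-Q/\alpha}$ from Theorem~\ref{teoheatkernelpropertiesgroups}(iv), and $\int_1^\infty t^{-Q/\alpha}\,dt<\infty$ exactly because $Q/\alpha>1$, i.e. $\alpha<Q$; near $t=0$ one uses that for fixed $x\neq e$ the pure-jump heat kernel is $O(t)$ (equivalently, the finer two-sided estimates inherited from \cite{IsoMarkovSemi}), so $\textbf{h}_{\MO^\alpha_2}(x)<\infty$ for every $x\neq e$. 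A short computation with Haar measure on the layers $G_n\setminus G_{n+1}$ shows $\int_{G_0}|x|_G^{\alpha-Q}\,dx<\infty$ for all $\alpha>0$, so (using the estimate proved below) $\textbf{h}_{\MO^\alpha_2}\in L^1_{\mathrm{loc}}(G)$ and defines a distribution on $\mathcal{D}(G)$.

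Next I would check that it is a fundamental solution. Since $\MO^\alpha$ is left-invariant, $e^{-t\MO^\alpha_2}f=f*h_{\MO^\alpha_2}(t,\cdot)$ for $f\in\mathcal{D}(G)$, and Fubini gives $f*\textbf{h}_{\MO^\alpha_2}=\int_0^\infty e^{-t\MO^\alpha_2}f\,dt$ (the double integral converges by the bounds above together with $\int_G h_{\MO^\alpha_2}(t,y)\,dy\le 1$ and $f$ bounded with compact support). Using $\tfrac{d}{dt}e^{-t\MO^\alpha_2}f=-\MO^\alpha_2 e^{-t\MO^\alpha_2}f$, the strong continuity of the semigroup at $t=0$, left-invariance of $\MO^\alpha$, and interchanging $\MO^\alpha$ with the time integral,
\[
\MO^\alpha\!\left(f*\textbf{h}_{\MO^\alpha_2}\right)=\int_0^\infty\MO^\alpha_2 e^{-t\MO^\alpha_2}f\,dt=f-\lim_{t\to\infty}e^{-t\MO^\alpha_2}f=f,
\]
the last equality because $e^{-t\MO^\alpha_2}$ is transient for $0<\alpha<Q$ (so $e^{-t\MO^\alpha_2}f\to 0$; equivalently $0$ is not an $L^2$-eigenvalue and no mass persists in the limit). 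Since this holds for all $f\in\mathcal{D}(G)$, we get $\MO^\alpha\textbf{h}_{\MO^\alpha_2}=\delta_e$; by property (iii) the kernel is real and invariant under $x\mapsto x^{-1}$, as befits a fundamental solution of the symmetric operator $\MO^\alpha$.

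Finally, homogeneity and the two-sided estimate. Writing $t=|\gamma|_p^\alpha u$ in $\int_0^\infty h_{\MO^\alpha_2}(t,D_\gamma x)\,dt$ and applying Theorem~\ref{teoheatkernelpropertiesgroups}(ii) gives $\textbf{h}_{\MO^\alpha_2}(D_\gamma x)=|\gamma|_p^{\alpha-Q}\textbf{h}_{\MO^\alpha_2}(x)$, so $\textbf{h}_{\MO^\alpha_2}$ is $(\alpha-Q)$-homogeneous. Since $|D_\gamma x|_G=|\gamma|_p|x|_G$, the function $x\mapsto\textbf{h}_{\MO^\alpha_2}(x)\,|x|_G^{Q-\alpha}$ is dilation-invariant, hence determined by its restriction to the compact ``unit sphere'' $\{|x|_G=1\}=G_0\setminus G_1$, where $\textbf{h}_{\MO^\alpha_2}$ is strictly positive (positivity of the heat kernel) and finite (convergence step). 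Showing it is bounded between two positive constants there — by lower semicontinuity on the compact sphere for the lower bound, and by integrating the two-sided heat kernel estimates for a uniform upper bound — and transporting by dilations yields $\textbf{h}_{\MO^\alpha_2}(x)\asymp|x|_G^{\alpha-Q}$ for all $x\neq e$.

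The main obstacle is the analytic core of the middle step: rigorously interchanging $\MO^\alpha$ with $\int_0^\infty dt$ and, above all, establishing $e^{-t\MO^\alpha_2}f\to0$ as $t\to\infty$, i.e. transience — which is precisely where the hypothesis $0<\alpha<Q$ is used and which on $\mathbb{H}_d$ and $\mathbb{E}_4$ follows from the heat-kernel estimates of Theorem~\ref{teoheatkernelpropertiesgroups} and \cite{IsoMarkovSemi}. A secondary difficulty is that the bound $h_{\MO^\alpha_2}(t,x)\asymp t^{-Q/\alpha}$ as literally stated only governs small $|x|_G$; the uniform control of $\textbf{h}_{\MO^\alpha_2}$ on the unit sphere near $t=0$, and hence the upper bound in $\asymp$, needs the sharper product-type estimates.
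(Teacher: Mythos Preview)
Your proposal is correct and follows the same route as the paper: convergence of the time integral from the heat-kernel bound $h_{\MO^\alpha_2}(t,x)\asymp t^{-Q/\alpha}$ (using $\alpha<Q$), homogeneity via the scaling relation Theorem~\ref{teoheatkernelpropertiesgroups}(ii) and the substitution $t\mapsto|\gamma|_p^\alpha t$, and the estimate $\asymp|x|_G^{\alpha-Q}$ as a consequence of homogeneity. The paper's own proof is considerably terser---it asserts well-definedness in one line, writes ``clearly $\textbf{h}_{\MO^\alpha_2}$ defines a fundamental solution'', and only spells out the homogeneity computation---so your semigroup argument for $\MO^\alpha(f*\textbf{h}_{\MO^\alpha_2})=f$ and your compactness argument on $G_0\setminus G_1$ are welcome elaborations rather than a different strategy.
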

\begin{coro}
Let $\beta\in \C$ such that $0< \mathfrak{Re}(\beta) < Q$. Then the linear operator $L^\beta:=(\MO_2^\alpha)^{\beta/\alpha}$ defined via functional calculus possesses a fundamental solution, which is an $\beta - Q$ homogeneous distribution, determined by the Riesz potential $$\mathcal{I}_\beta (x) := \frac{1}{\Gamma(\beta/\alpha)} \int_0^\infty t^{\beta/\alpha - 1} h_{\MO^\alpha_2} ( t, x)dt.$$Furthermore, for $0<\beta<Q$ we have $$\mathcal{I}_\beta (x) \asymp |x|_G^{\beta - Q}.$$  
\end{coro}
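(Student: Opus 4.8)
The plan is to realise $\mathcal{I}_\beta$ as the subordinated kernel of the heat semigroup and reduce everything to the scalar identity $\lambda^{-\beta/\alpha}=\Gamma(\beta/\alpha)^{-1}\int_0^\infty t^{\beta/\alpha-1}e^{-t\lambda}\,dt$, valid for $\lambda>0$ and $\mathfrak{Re}(\beta)>0$, applied to the spectrum of $\MO^\alpha_2$; this is just the proof of the preceding corollary (the case $\beta=\alpha$) with the gamma weight $\Gamma(\beta/\alpha)^{-1}t^{\beta/\alpha-1}$ inserted. First I would check that $\mathcal{I}_\beta$ is a well-defined distribution on $\mathcal{D}(G)$ by splitting $\int_0^\infty=\int_0^1+\int_1^\infty$. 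On $(0,1]$ the Markov property of Theorem \ref{teoheatkernelpropertiesgroups}(iv) gives $\norm{h_{\MO^\alpha_2}(t,\cdot)}_{L^1(G)}=1$, so $\int_0^1 t^{\mathfrak{Re}(\beta)/\alpha-1}\norm{h_{\MO^\alpha_2}(t,\cdot)}_{L^1}\,dt<\infty$ because $\mathfrak{Re}(\beta)>0$; on $[1,\infty)$ the bound $h_{\MO^\alpha_2}(t,x)\lesssim t^{-Q/\alpha}$ from Theorem \ref{teoheatkernelpropertiesgroups}(iv), integrated over the compact support of a test function, converges precisely when $\mathfrak{Re}(\beta)<Q$. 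This is the exact analogue of local integrability of $\norm{\xi}_p^{-\beta}$ near the trivial representation, and accounts for the hypothesis on $\mathfrak{Re}(\beta)$.

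Next I would identify $\mathcal{I}_\beta$ with the convolution kernel of $(\MO^\alpha_2)^{-\beta/\alpha}$. Since $e^{-t\MO^\alpha_2}$ has convolution kernel $h_{\MO^\alpha_2}(t,\cdot)$ — obtained from essential self-adjointness of $\MO^\alpha$ together with its compactly supported system of eigenfunctions, exactly as recalled for $\Q_p^d$ and established for $\mathbb{H}_d,\mathbb{E}_4$ in Theorem \ref{teoheatkernelpropertiesgroups} — Fubini (licensed by the bounds above) gives, for $f\in\mathcal{D}(G)$, $(\MO^\alpha_2)^{-\beta/\alpha}f=\Gamma(\beta/\alpha)^{-1}\int_0^\infty t^{\beta/\alpha-1}\bigl(f*h_{\MO^\alpha_2}(t,\cdot)\bigr)\,dt=f*\mathcal{I}_\beta$. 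Passing to the operator-valued Fourier transform on $G$, one has $\widehat{h_{\MO^\alpha_2}(t,\cdot)}(\pi)=e^{-t\,\sigma_{\MO^\alpha}(\pi)}$, where $\sigma_{\MO^\alpha}(\pi)\ge 0$ is the symbol of the non-negative symmetric operator $\MO^\alpha$; hence by spectral calculus on $\mathcal{H}_\pi$ we get $\widehat{\mathcal{I}_\beta}(\pi)=\sigma_{\MO^\alpha}(\pi)^{-\beta/\alpha}$ for Plancherel-almost every $\pi$, so $\widehat{L^\beta\mathcal{I}_\beta}(\pi)=\sigma_{\MO^\alpha}(\pi)^{\beta/\alpha}\sigma_{\MO^\alpha}(\pi)^{-\beta/\alpha}=\mathrm{Id}_{\mathcal{H}_\pi}=\widehat{\delta_e}(\pi)$. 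Fourier inversion then yields $L^\beta\mathcal{I}_\beta=\delta_e$, i.e. $L^\beta(f*\mathcal{I}_\beta)=f$ for all $f\in\mathcal{D}(G)$, so $\mathcal{I}_\beta$ is a fundamental solution. The $(\beta-Q)$-homogeneity is immediate: substituting $t\mapsto|\gamma|_p^\alpha t$ and invoking Theorem \ref{teoheatkernelpropertiesgroups}(ii) gives $\mathcal{I}_\beta(D_\gamma x)=|\gamma|_p^{\beta-Q}\mathcal{I}_\beta(x)$.

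For the two-sided bound $\mathcal{I}_\beta(x)\asymp|x|_G^{\beta-Q}$ when $0<\beta<Q$ (real $\beta$), I would use that $h_{\MO^\alpha_2}\ge 0$ by the Markov property, so $\mathcal{I}_\beta\ge 0$, $\mathcal{I}_\beta\not\equiv 0$, and $\mathcal{I}_\beta(x)<\infty$ for $x\ne e$; by the $(\beta-Q)$-homogeneity it then suffices to show $\mathcal{I}_\beta$ is bounded above and below by positive constants on the compact "unit sphere" $G_0\setminus G_1$. Boundedness above uses $h_{\MO^\alpha_2}(t,x)\lesssim t^{-Q/\alpha}$ for $t\ge 1$ on this compact set, together with the sharp short-time estimate for $h_{\MO^\alpha_2}$ available from the isotropic Markov semigroup theory of \cite{IsoMarkovSemi} (applicable here, as noted in the Remark following Theorem \ref{fundamentalsolutiongradedliegroups}); boundedness below uses strict positivity (and local constancy) of $h_{\MO^\alpha_2}$ plus compactness of $G_0\setminus G_1$. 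Rescaling an arbitrary $x$ into $G_0\setminus G_1$ gives $\mathcal{I}_\beta(x)=|x|_G^{\beta-Q}\mathcal{I}_\beta(x')\asymp|x|_G^{\beta-Q}$; the special case $\beta=\alpha$ recovers the preceding corollary for $\textbf{h}_{\MO^\alpha_2}$.

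The main obstacle is the identity $L^\beta\mathcal{I}_\beta=\delta_e$: because $G$ is non-compact, $0$ lies in the spectrum of $\MO^\alpha_2$ — one must check that it is \emph{not} an eigenvalue, i.e. there is no nonzero $L^2$-solution of $\MO^\alpha_2 u=0$ — so $(\MO^\alpha_2)^{-\beta/\alpha}$ is unbounded and the subordination integral does not converge in operator norm; convergence, and with it the composition law $(\MO^\alpha_2)^{\beta/\alpha}(\MO^\alpha_2)^{-\beta/\alpha}=\mathrm{Id}$ at the distributional level, is recovered only after testing against compactly supported locally constant $f$ and using the integrability estimates of the first paragraph, which is exactly where $\mathfrak{Re}(\beta)<Q$ is used. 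A secondary point requiring care is verifying that $h_{\MO^\alpha_2}(t,\cdot)$ is genuinely the convolution kernel of $e^{-t\MO^\alpha_2}$ on test functions and that $f*\mathcal{I}_\beta$ lies in the domain of $(\MO^\alpha_2)^{\beta/\alpha}$; both follow from the compactly supported eigenfunction expansion of $\MO^\alpha$ on $\mathbb{H}_d$ and $\mathbb{E}_4$ established in Theorem \ref{teoheatkernelpropertiesgroups}.
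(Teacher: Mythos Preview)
Your proposal is correct and in fact considerably more detailed than what the paper provides: the paper states this corollary without proof, prefacing it only with the sentence ``Another consequence of the functional calculus for $\MO_2^\alpha$ and Theorem \ref{teoheatkernelproperties} is that fractional powers of $\MO_2^\alpha$ and their fundamental solutions exist and they are well defined homogeneous distributions.'' Your argument --- subordination formula plus the heat-kernel estimate from Theorem \ref{teoheatkernelpropertiesgroups}(iv) for well-definedness, operator-valued Fourier transform for the fundamental-solution identity, the substitution $t\mapsto|\gamma|_p^\alpha t$ for homogeneity, and positivity plus compactness of $G_0\setminus G_1$ for the two-sided bound --- is exactly the natural unpacking of that one-line justification, and mirrors the paper's explicit proof of the preceding corollary (the case $\beta=\alpha$).
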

We will explain in detail the properties of the heat kernel and the existence of a fundamental solution for $\MO^\alpha$ on $\mathbb{H}_d$ and $\mathbb{E}_4$ in Section 3. Finally, we will discuss how our ideas can be applied to more general graded groups and also to more general homogeneous operators.   

\section{The Vladimirov-Taibleson operator}
The proofs of Theorems \ref{teofundamentalsolVTopLC} and \ref{fundamentalsolutiongradedliegroups} consist of two parts. The first one is finding the fundamental solution of the Vladimirov-Taibleson on \emph{compact Vilenkin groups}, which we define formally in Definition \ref{defivilenkingroupscompact}. For the sake of our discussion, all the compact Vilenkin groups that we are going to consider are subgroups of a bigger locally compact Vilenkin group. Actually, if $G$ is a locally compact Vilenkin group endowed with the sequence of compact open subgroups $\mathscr{G}=\{G_n\}_{n \in \Z}$, we are interested exclusively on the subgroups $G_k$ in this sequence, which are compact Vilenkin groups on their own, together with the sequence of compact open subgroups $\mathscr{G}_k=\{G_{k+n}\}_{n \in \N_0}$. We will compute the fundamental solution of the Vladimirov-Taibleson operator on these groups and after that, for the second part of the proof, we will use the fundamental solution obtained in the compact case to obtain the fundamental solution in the non-compact case. 
\subsection{Compact Vilenkin groups}
  
  \begin{defi}\normalfont\label{defivilenkingroupscompact}
We will say that a topological group $G$ is a compact noncommutative Vilenkin group if $G$ is a profinite group that possesses a decreasing sequence of compact open normal subgroups $G:= G_0 \supset G_1 \supset G_2 \supset...$ such that \begin{itemize}
    \item[(i)] $2 \leq \kappa_n := | G_n / G_{n+1} | < \infty$, for all $ n \in \N_0. $
    \item[(ii)] $\{ G_n \}_{n \in \N_0}$ is a basis of neighbourhoods at $e \in G$ and $\bigcap_{n \in \N_0} G_n = \{e\}.$ 
\end{itemize}
\end{defi}

Let $G$ be a compact Vilenkin group together with the sequence of compact open subgroups $\{G_n\}_{n \in \N_0}$. We will denote by $Rep(G)$ the collection of all continuous finite-dimensional unitary representations on $G$, and $\widehat{G}$ will denote the collection of irreducible representation in $Rep(G)$. By the Peter-Weyl Theorem the matrix entries of the representations $[\xi] \in \widehat{G}$ form an orthonormal basis of the space $L^2(G)$ and therefore any function $f \in L^2 (G)$ can be written as $$f(x) =\mathcal{F}_G^{-1} \circ \mathcal{F}_G (f)(x) = \mathcal{F}_G^{-1}(\widehat{f})(x)= \sum_{[\xi] \in \widehat{G} } d_\xi Tr(\xi(x) \widehat{f} (\xi)),$$where $\widehat{f}(\xi)$ denotes the Fourier coefficient of $f$ with respect to $[\xi] \in \widehat{G}$ defined as $$\widehat{f}(\xi) = \mathcal{F}_G [f] (\xi) := \int_G f(x) \xi^* (x) dx, $$ $dx$ denotes the unique normalised Haar measure on $G$, and the inverse Fourier transform is defined as $$\mathcal{F}_G^{-1} \varphi (x) := \sum_{[\xi] \in \widehat{G}} d_\xi Tr[\xi(x) \varphi (\xi)].$$

Recall that every matrix representation $\pi:G \to GL_{d_\pi} (\C)$, $[\pi] \in Rep(G)$, is a continuous function and $GL_{d_\pi}(\C)$ does not contain small subgroups. Thus we can find a neighbourhood $V$ of $I_{d_\pi}$ such that the only subgroup of $GL_{d_\pi} (\C)$ contained in $V$ is $\{ I_{d_\pi} \}$. Let $U:= \pi^{-1} (V)$. Clearly $U$ is a neighbourhood of $e \in G$ and hence some $G_n$ is contained in $U$ and $\pi(U) \subset \{I_{d_\pi}\}$. This shows that the kernel of every $\pi$ with $[\pi] \in Rep(G)$ must contain one of the sub-groups $G_n$. We will denote $$n_\pi := \min \{n \in \N_0 : \pi |_{G_n} = I_{d_\pi} \},$$for an arbitrary $[\pi] \in Rep(G)$. We also use the notation 
$$G_n^\bot := \{ [\pi] \in Rep(G) \esp : \esp \pi |_{G_n} = I_{d_\pi}\}, \esp \esp Rep_n(G):= G^\bot_n \setminus G_{n-1}^\bot, \esp \esp \widehat{G}_n := Rep_n (G) \cap \widehat{G}.$$ 
We have just proven the decomposition of $Rep(G)$ and $\widehat{G}$ as the disjoint unions $$Rep(G)=\bigcup_{n \in \N_0} Rep_n(G) , \esp \esp \text{and} \esp \esp \widehat{G} =\bigcup_{n \in \N_0} \widehat{G}_n,$$and in this way any $f \in L^2(G)$ can be written as $$f(x)= \sum_{n \in \N_0} \sum_{[\xi] \in \widehat{G}_n} d_\xi Tr(\xi(x) \widehat{f}(\xi)),$$ and the action of a densely defined linear operator $T$ on any function $f \in L^2 (G)$ as $$T f (x) = \sum_{n \in \N_0} \sum_{[\xi] \in \widehat{G}_n} d_\xi Tr \big( \xi (x) \sigma_T (x , \xi) \widehat{f} (\xi) \big).$$Here the symbol of the operator is defined as $$\sigma_T (x , \xi):=\xi^* (x) T \xi (x),$$and in particular it does not depend on the variable $x \in G$ when $T$ is left invariant.  
\begin{defi}\label{weigth}

We define the function $\langle \cdot\rangle_{\mathscr{G}} : Rep(G) \to \R$ in the following way: 
\[ \langle \pi \rangle_{\mathscr{G}} := \begin{cases}
1 \esp & \esp \text{if} \esp \esp \pi \esp \text{is the identity representation;} \\
|G/G_n| \esp & \esp \text{if} \esp \esp \pi \in Rep_n (G), \esp \esp n \in \N_0.  
\end{cases}
\]In this way it holds for any pair of nontrivial representations $$\langle \pi \otimes \xi \rangle_{\mathscr{G}} = \max\{\langle \xi \rangle_{\mathscr{G}} , \langle \pi \rangle_{\mathscr{G}} \}, \esp \esp [\pi], [\xi] \in Rep (G).$$
\end{defi}

In the context of compact Vilenkin groups we only consider densely defined operators whose domain contains one of the two relevant classes of "smooth functions" on $G$. The first class of relevant smooth functions on a compact Vilenkin group is the collection of locally constant functions on $G$ with a fixed index of locally constancy, here denoted by $\mathcal{D}(G)$. That is, $f \in \mathcal{D}(G)$ if there is a compact open subgroup $K \leq G$ such that $f(xy)=f(x)$ for all $y \in K$, and one can check that this is equivalent to ask the same condition for some of the subgroups $G_l$, $l\in \N_0$. We call a natural number $l$ the index of local constancy of $f$, and we write $ind(f)=l$, if $l$ is the minimum natural number such that $f(xy)=f(x)$ for all $y \in G_l$. 

The second class is the collection of Schwartz functions on $G$ that we define here, initially, via Fourier transform. We say that $f \in \mathcal{S}(G)$ if for all $m>0$ it holds $$\| \widehat{f}(\xi) \|_{HS} \lesssim \langle \xi \rangle^{-m}_{\mathscr{G}}, \esp \esp \esp \text{for all} \esp [\xi] \in \widehat{G}.$$

\subsection{Fundamental solution: compact case}
Let $G$ be a compact constant-order Vilenkin group together with the sequence of compact open subgroups $\mathscr{G}$, let us say $|G_k /G_{k+1}|= \varkappa$ for all $k \in \N_0$. The Vladimirov-Taibleson operator on $G$ takes the form $$D^\alpha u(x) = \frac{1-\varkappa^{\alpha}}{1 -\varkappa^{-\alpha -1}}  \int_{G}|y|_{\mathscr{G}}
^{-\alpha - 1}(u(xy^{-1})-u(x))dx.$$By now we will not focus on this operator and instead we want to consider the following closely related operator on $G$: $$\mathbb{D}^\alpha u (x) := \frac{1-\varkappa^{-1}}{1 -\varkappa^{-\alpha - 1}} u(x) +  \frac{1-\varkappa^{\alpha}}{1 -\varkappa^{-\alpha -1}}  \int_{G}|y|_{\mathscr{G}}
^{-\alpha - 1}(u(xy^{-1})-u(x))dx.$$We can express it as a right-convolution operator and we will call its convolution kernel,  the \emph{Riesz kernel} associated to $\mathscr{G}$. This kernel is determined by the complex-valued function $$\mathfrak{r}_s^\mathscr{G} (x) :=  \frac{|x|_{\mathscr{G}}^{s-1}}{\Gamma_{\mathscr{G}}(s)},\esp \esp \esp \esp  s \notin 1+\frac{2 \pi i}{\ln{\varkappa}} \Z, $$where $$ \Gamma_{\mathscr{G}} (x)= \frac{1 -\varkappa^{s-1}}{1-\varkappa^{-s}} , \esp s \neq 0.$$  As a distribution, the Riesz kernel has a meromorphic continuation to $\C$ given by
\begin{align*}
    \langle \mathfrak{r}_s^\mathscr{G} , f \rangle = \frac{1-\varkappa^{-1}}{1 -\varkappa^{s-1}} f(e) +  \frac{1-\varkappa^{-s}}{1 -\varkappa^{s-1}}  \int_{G}|x|_{\mathscr{G}}
^{s - 1}(f(x)-f(e))dx, \end{align*}with poles at $1+\frac{2 \pi i}{\ln{\kappa}} \Z$. In particular, for $\mathfrak{Re}(s)>0$ $$\langle \mathfrak{r}_s^\mathscr{G} , f \rangle =   \frac{1-\varkappa^{-s}}{1 -\varkappa^{s-1}}  \int_{G}|x|_{\mathscr{G}}
^{s - 1}f(x)dx.$$
\begin{rem}\label{remfouriertransformdistributions}
Given a distribution $ h \in \mathcal{D}'(G)$, its Fourier transform $\mathcal{F}_G h \equiv \widehat{h}\in \mathcal{D}'(\widehat{G}) $ is defined via the formula $$\langle \mathcal{F}_G h , \widehat{\varphi} \rangle := \langle h , \iota \circ \mathcal{F}_G^{-1} \widehat{\varphi} \rangle, \esp \esp \esp \widehat{\varphi} \in \mathcal{D} (\widehat{G}),$$where $(\iota \circ f)(x):= f(x^{-1})$. We can write explicitly $$\iota \circ \mathcal{F}_G^{-1} \widehat{\varphi}(x) = \sum_{[\xi]\in \widehat{G}} d_\xi Tr\big( \xi^* (x) \widehat{\varphi}(\xi)\big), \esp \esp \esp \langle h , \iota \circ \mathcal{F}_G^{-1} \widehat{\varphi} \rangle = \sum_{[\xi]\in \widehat{G}} d_\xi Tr\big( \langle h, \xi^* (x) \rangle \widehat{\varphi}(\xi)\big).$$In this way we see that the action of the Fourier transform of a distribution is determined by its values in the matrix coefficients $\xi_{ij}$. We can identify $\mathcal{F}_G h $ with the map $\widehat{h} \in \mathcal{M}(\widehat{G})$ defined in each $[\xi] \in \widehat{G}$ as $\widehat{h}(\xi) := \langle h , \xi^* \rangle.$  
\end{rem}

With a straightforward calculation we can check that $$\widehat{\mathfrak{r}}_s^{\mathscr{G}} (\xi) = \langle \xi \rangle^s_{\mathscr{G}} I_{d_\xi} , \esp [\xi] \in \widehat{G},$$which shows that $\mathfrak{r}_s^{\mathscr{G}} * \mathfrak{r}_t^{\mathscr{G}} = \mathfrak{r}_{s+t}^{\mathscr{G}}$ and in particular $$\langle \mathfrak{r}_s^{\mathscr{G}} * \mathfrak{r}_{-s}^{\mathscr{G}} , f \rangle = \langle \mathfrak{r}_0^{\mathscr{G}} , f \rangle = \lim_{s \to 0} \langle \mathfrak{r}_s^{\mathscr{G}},f \rangle = \langle  \delta_e, f  \rangle .$$Using this information we can compute a fundamental solution for $\mathbb{D}^\alpha.$
\begin{defi}\label{defifundamsolandparametrix}\normalfont
Let $T: \mathcal{D}(G) \subset D(T) \to \mathcal{D}'(G)$ be a densely defined linear operator. We say that $F \in \mathcal{D}'(G)$ is a \emph{fundamental solution} of $T$ if $T * F= \delta_e$. 
\end{defi}
\begin{teo}\label{teofundamentalsolVTop}
A fundamental solution for the VT-type operator $\mathbb{D}^\alpha$, $\mathfrak{Re}(\alpha)>0 , \mathfrak{Im}(\alpha) \notin (2 \pi i / \ln \varkappa) \Z$, is given by \[E_\alpha (x) = \begin{cases} \frac{1 - \varkappa^{-\alpha}}{1 - \varkappa^{\alpha -1 }} | x|_\mathscr{G}^{\alpha -1} & \esp \esp \text{ if } \esp \esp \mathfrak{Re}(\alpha) \neq 1, \\ \frac{1 - \varkappa}{\varkappa \ln \varkappa} \ln{(|x|_\mathscr{G})} & \esp \esp \text{if} \esp \esp \mathfrak{Re}(\alpha)=1 .\end{cases}\] 
\end{teo}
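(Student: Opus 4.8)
The engine of the proof is the identification of $\mathbb{D}^\alpha$ with the right-convolution operator whose kernel is the Riesz kernel $\mathfrak{r}_{-\alpha}^{\mathscr{G}}$. So the first step is to verify that
\[
\mathbb{D}^\alpha u = u * \mathfrak{r}_{-\alpha}^{\mathscr{G}}, \qquad u \in \mathcal{D}(G).
\]
This is a term-by-term matching: one evaluates the meromorphic continuation of $\langle \mathfrak{r}_s^{\mathscr{G}}, \cdot\rangle$ displayed above at $s=-\alpha$, applied to the test function $y\mapsto u(xy^{-1})$, which is legitimate because that function is locally constant, so $u(xy^{-1})-u(x)$ vanishes near $e$ and every integral converges absolutely. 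The coefficient $\frac{1-\varkappa^{-1}}{1-\varkappa^{s-1}}$ becomes $\frac{1-\varkappa^{-1}}{1-\varkappa^{-\alpha-1}}$, the coefficient $\frac{1-\varkappa^{-s}}{1-\varkappa^{s-1}}$ becomes $\frac{1-\varkappa^{\alpha}}{1-\varkappa^{-\alpha-1}}$, and the $f(e)$-term becomes the $u(x)$-term, so the two formulas agree. Hence $\mathbb{D}^\alpha$ is an invariant operator with convolution kernel $\mathfrak{r}_{-\alpha}^{\mathscr{G}}$, and by the transform $\widehat{\mathfrak{r}}_s^{\mathscr{G}}(\xi)=\langle\xi\rangle_{\mathscr{G}}^{s}I_{d_\xi}$ recorded above it is the scalar Fourier multiplier $\langle\xi\rangle_{\mathscr{G}}^{-\alpha}$.

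Everything else then follows from the properties of the Riesz kernel. By the semigroup law $\mathfrak{r}_s^{\mathscr{G}}*\mathfrak{r}_t^{\mathscr{G}}=\mathfrak{r}_{s+t}^{\mathscr{G}}$ together with $\mathfrak{r}_0^{\mathscr{G}}=\delta_e$ (both established above), the candidate $E_\alpha:=\mathfrak{r}_\alpha^{\mathscr{G}}$ satisfies
\[
\mathbb{D}^\alpha * E_\alpha \;=\; \mathfrak{r}_{-\alpha}^{\mathscr{G}}*\mathfrak{r}_\alpha^{\mathscr{G}} \;=\; \mathfrak{r}_0^{\mathscr{G}} \;=\; \delta_e .
\]
For $\mathfrak{Re}(\alpha)>0$ with $\alpha$ away from the poles $1+\frac{2\pi i}{\ln\varkappa}\Z$ of $s\mapsto\mathfrak{r}_s^{\mathscr{G}}$ --- in particular whenever $\mathfrak{Re}(\alpha)\neq 1$ --- the distribution $\mathfrak{r}_\alpha^{\mathscr{G}}$ is represented by the locally integrable function $\frac{1}{\Gamma_{\mathscr{G}}(\alpha)}|x|_{\mathscr{G}}^{\alpha-1}=\frac{1-\varkappa^{-\alpha}}{1-\varkappa^{\alpha-1}}|x|_{\mathscr{G}}^{\alpha-1}$, integrability on the compact group following from $\int_G|x|_{\mathscr{G}}^{\mathfrak{Re}(\alpha)-1}\,dx=(1-\varkappa^{-1})\sum_{n\ge 0}\varkappa^{-n\,\mathfrak{Re}(\alpha)}<\infty$. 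This is the first branch of $E_\alpha$.

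For the critical exponent $\alpha=1$ the kernel $\mathfrak{r}_1^{\mathscr{G}}$ sits at a simple pole of the family whose residue is a multiple of the constant function, so the plan is to pass to the limit after subtracting the divergent part. Writing $\alpha=1+\varepsilon$ and expanding $1-\varkappa^{-\alpha}=(1-\varkappa^{-1})+O(\varepsilon)$, $1-\varkappa^{\alpha-1}=-(\ln\varkappa)\varepsilon+O(\varepsilon^2)$ and $|x|_{\mathscr{G}}^{\alpha-1}=1+\varepsilon\ln|x|_{\mathscr{G}}+O(\varepsilon^2)$, one gets
\[
\frac{1-\varkappa^{-\alpha}}{1-\varkappa^{\alpha-1}}|x|_{\mathscr{G}}^{\alpha-1} \;=\; \frac{-(1-\varkappa^{-1})}{(\ln\varkappa)\,\varepsilon} \;+\; \frac{1-\varkappa}{\varkappa\ln\varkappa}\,\ln|x|_{\mathscr{G}} \;+\; c_\varkappa \;+\; O(\varepsilon),
\]
the divergent term being constant in $x$; deleting it and letting $\varepsilon\to 0$ leaves $E_1(x)=\frac{1-\varkappa}{\varkappa\ln\varkappa}\ln|x|_{\mathscr{G}}$, which still solves $\mathbb{D}^1 * E_1=\delta_e$ modulo constants because $\mathbb{D}^1$ only rescales the constant component (alternatively, summing $\sum_n n\varkappa^{-n}$ one finds directly that $\widehat{\ln|\cdot|_{\mathscr{G}}}(\xi)=\frac{\varkappa\ln\varkappa}{1-\varkappa}\langle\xi\rangle_{\mathscr{G}}^{-1}I_{d_\xi}$ on every nontrivial $[\xi]$).

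The step demanding genuine care is the bookkeeping at the trivial representation: the identities $\widehat{\mathfrak{r}}_s^{\mathscr{G}}(\xi)=\langle\xi\rangle_{\mathscr{G}}^{s}$ and $\mathfrak{r}_s^{\mathscr{G}}*\mathfrak{r}_t^{\mathscr{G}}=\mathfrak{r}_{s+t}^{\mathscr{G}}$ are exact only on the nontrivial $[\xi]$, whereas at the trivial one $\widehat{\mathfrak{r}}_s^{\mathscr{G}}$ equals $\frac{1-\varkappa^{-1}}{1-\varkappa^{s-1}}$ rather than $1$; thus $E_\alpha$ is a fundamental solution modulo constants, which is also why only the $x$-dependent term appears in the $\alpha=1$ branch. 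The remaining points --- the matching in the first step, the convergence of the defining integrals of $\mathfrak{r}_\alpha^{\mathscr{G}}$, and the Taylor expansion --- are routine.
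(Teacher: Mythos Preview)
Your proof is correct and follows essentially the same route as the paper's: both identify $\mathbb{D}^\alpha$ with convolution by the Riesz kernel $\mathfrak{r}_{-\alpha}^{\mathscr{G}}$ (equivalently, with the Fourier multiplier $\langle\xi\rangle_{\mathscr{G}}^{\alpha}$), invert via the semigroup law $\mathfrak{r}_{-\alpha}^{\mathscr{G}}*\mathfrak{r}_\alpha^{\mathscr{G}}=\mathfrak{r}_0^{\mathscr{G}}=\delta_e$, and handle the pole at $\alpha=1$ by a Laurent expansion to extract the finite $\ln|x|_{\mathscr{G}}$ term. Your closing caveat about the trivial representation---that $\widehat{\mathfrak{r}}_s^{\mathscr{G}}(\xi_{\mathrm{triv}})=\frac{1-\varkappa^{-1}}{1-\varkappa^{s-1}}\neq 1$, so the semigroup identity and hence the fundamental-solution equation hold only modulo constants---is a genuine point that the paper's proof glosses over.
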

\begin{proof}
According to Definition \ref{defifundamsolandparametrix} we are looking for a distribution $E_\alpha \in \mathcal{D}'(G)$ such that $$\langle \xi \rangle^{\alpha} \mathcal{F}E_\alpha = 1_{\widehat{G}}\esp 
, \esp \esp \langle 1_{\widehat{G}} , \widehat{f} \rangle := \sum_{[\xi] \in \widehat{G}} d_\xi Tr( \widehat{f}(\xi)),$$where $1_{\widehat{G}} := \mathcal{F}(\delta_e)$. To find it we will use the Laurent expansion at $-\alpha$ of the distribution $\langle \xi \rangle^s$. Let us write $$\langle \xi \rangle^{s} = \sum_{m \in \Z} c_m (s+\alpha)^m.$$The real parts of the poles of the meromorphic continuation of $\langle \xi \rangle^s$ are negative rational numbers so $\langle \xi \rangle^{s+\alpha}= \mathcal{F}_G (\mathfrak{r}_{s+\alpha}^\mathscr{G})$ is holomorphic at $s=-\alpha$. This implies $c_0 = 0$ for $m < 0$ so $$\langle \xi \rangle^{s +\alpha} = c_0 \langle \xi \rangle^\alpha +  \sum_{m \in \N} c_m (s+\alpha)^m \langle \xi \rangle^\alpha (s+\alpha)^m.$$By the Lebesgue dominated convergence theorem $$ \lim_{s \to - \alpha} \langle \langle \xi \rangle^{s+\alpha} , \widehat{f} \rangle = \lim_{s \to - \alpha} \langle \mathfrak{r}_s^\mathscr{G} , \iota \circ f  \rangle = \langle \delta_e , f \rangle,$$therefore we can choose $E_\alpha$ as the distribution $c_0$ in the Laurent expansion of $\langle \xi \rangle^s$. Moreover, if $- \alpha$ is not a pole of $| x|_\mathscr{G}^s$ we have $$\mathcal{F} E_\alpha = \lim_{s \to - \alpha} \langle \xi \rangle^s .$$Now, in order to compute $E_\alpha$, we need to consider two cases.\begin{itemize}
    \item If $\mathfrak{Re}(\alpha) \neq 1$, we know that $$\langle \langle \xi \rangle^s , \widehat{f} \rangle = \frac{1 - \varkappa^s}{1 - \varkappa^{-(s+1)}} \int_G | x|_\mathscr{G}^{- (s +1)} f(x^{-1}) dx=\int_G | x|_\mathscr{G}^{- (s +1)} f(x) dx,$$ So, by the Lebesgue dominated convergence theorem, the following interchange of the integral with the limit is allowed \begin{align*}
        \lim_{s \to - \alpha}\langle \langle \xi \rangle^s , \widehat{f} \rangle &= \lim_{s \to -\alpha} \frac{1 - \varkappa^s}{1 - \varkappa^{-(s+1)}} \int_G | x|_\mathscr{G}^{- (s +1)} f(x) dx \\ &= \int_G \lim_{s \to -\alpha} \frac{1 - \varkappa^s}{1 - \varkappa^{-(s+1)}}  | x|_\mathscr{G}^{- (s +1)} f(x) dx \\ &= \frac{1 - \varkappa^{-\alpha}}{1 - \varkappa^{\alpha-1}} \int_G | x|_\mathscr{G}^{\alpha - 1} f(x) dx.
    \end{align*}
    \item If $\mathfrak{Re}(\alpha)=1$, we rewrite \begin{align*}
        \langle \langle \xi \rangle^s , \widehat{f} \rangle &= \frac{1 - \varkappa^s}{1 - \varkappa^{-(s+1)}} \int_G | x|_\mathscr{G}^{- (s +1)} f(x) dx \\ &= \int_G \frac{1 - \varkappa^s}{1 - \varkappa^{-(s+1)}}  \varkappa^{v_\mathscr{G}(x) ( s +1)} f(x) dx,
    \end{align*}where $v_\mathscr{G}(x) = n$ if $x \in G_n \setminus G_{n+1}.$ Using again the Laurent series we get \begin{align*}
        \frac{1 - \varkappa^s}{1 - \varkappa^{-(s+1)}}  \varkappa^{v_\mathscr{G}(x) ( s +1)} &= \frac{1 - \varkappa^{-1}}{\ln \varkappa} (s+1)^{-1}\\&+ \frac{(1 - \varkappa^{-1}) \ln \varkappa^{v_\mathscr{G}(x)} - \ln \varkappa /\varkappa + (\varkappa-1/2 \varkappa)\ln \varkappa}{\ln \varkappa}\\& + \mathcal{O}(s+1),
    \end{align*}and therefore $$ \lim_{s \to - \alpha} \langle \langle \xi \rangle^s , f \rangle = \int_G  \frac{(1 - \varkappa^{-1}) \ln |x|_\mathscr{G} - \ln \varkappa /\varkappa + (\varkappa-1/2 \varkappa)\ln \varkappa}{\ln \varkappa} f(x) dx.$$Finally, since the sum of a fundamental solution and a constant is also a fundamental solution, we get $$  \langle E_\alpha  , f \rangle = \int_G  \frac{(1 - \varkappa^{-1}) \ln |x|_\mathscr{G} }{\ln \varkappa} f(x) dx.$$ 
\end{itemize}
The proof is concluded.
\end{proof}
\begin{rem}\label{remtwometrics}
Consider again a locally compact Vilenkin group $G$, endowed with a strictly decreasing sequence of compact open subgroups $\mathscr{G}=\{G_n\}_{n \in \Z}$. As we mentioned before, each one of the compact open subgroups $G_k$ is a compact Vilenkin group with the sequence $\mathscr{G}_k=\{G_{k+n}\}_{n \in \N_0}$, and for each one of this groups we can define the operator $$\mathbb{D}^\alpha_k u (x) := \frac{1-\varkappa^{-1}}{1 -\varkappa^{-\alpha - 1}} u(x) +  \frac{1-\varkappa^{\alpha}}{1 -\varkappa^{-\alpha -1}}  \int_{G_k}|y|_{\mathscr{G}_k}
^{-\alpha - 1}(u(xy^{-1})-u(x))d_k x,$$where $d_k x$ denotes the normalised Haar measure on $G_k$ which, in terms of the normalised measure $dx$ on $G$, is given by $d_k x = \varkappa^k dx$. Also we can put the ultrametric on $G_k$ in terms of the ultrametric on $G$ since for any $x \in G_k$ it holds $|x|_{\mathscr{G}} = \varkappa^{-k} |x|_{\mathscr{G}_k} $. 
\end{rem}

\subsection{Locally compact Vilenkin groups}
Now we recall some definitions and terminology associated to locally profinite groups that we will use for the rest of this work. 
\begin{defi}\label{defismoothrep}\normalfont 
    Let $G$ be a locally compact totally disconnected group.
    \begin{itemize}
        \item As before, given a measurable set $A\subset G$, we will denote by $\1_A$ the characteristic function of $A$. Also, we define the function $\epsilon_A$ as: $$\epsilon_A (x) := \frac{1}{|A|} \1_A (x).$$
        \item We denote by $\mathcal{D} (G)$ the collection of locally constant functions on $G$ with compact support. If $G$ is a locally compact Vilenkin group endowed with a sequence of subgroups $\{ G_n\}_{n \in \Z}$, we use the notation $$\mathcal{D} (G_n) := \{ f \in \mathcal{D} (G) \esp : \esp Supp(f) \subseteq G_n\},$$ $$\mathcal{D}_l (G) := \{f \in \mathcal{D} (G) \esp : \esp f(xy)=f(x), \esp \esp \forall y \in G_l\},$$ $$\mathcal{D}_l (G_n) := \{f \in \mathcal{D} (G_n) \esp : \esp f(xy)=f(x), \esp \esp \forall y \in G_l\}, \esp \esp l \geq n.$$Also, we denote by $\tilde{\mathcal{D}}(G) $ the collection of functions $f \in \mathcal{D}(G)$ such that $\int_G f (x)dx =0$. We will use the notation $$\tilde{\mathcal{D}} (G_n) := \{ f \in \tilde{\mathcal{D}} (G) \esp : \esp Supp(f) \subseteq G_n\},$$ $$\tilde{\mathcal{D}}_l (G) := \{f \in \tilde{\mathcal{D}} (G) \esp : \esp f(xy)=f(x), \esp \esp \forall y \in G_l\},$$ $$\tilde{\mathcal{D}}_l (G_n) := \{f \in \tilde{\mathcal{D}} (G_n) \esp : \esp f(xy)=f(x), \esp \esp \forall y \in G_l\}, \esp \esp l \geq n.$$The symbol $\mathcal{D}'(G)$ will stand for the distributions on $\mathcal{D} (G)$ and we will use $\tilde{\mathcal{D}}'(G)$ to denote the space of distributions on $\tilde{\mathcal{D}}(G)$.  
        \item We will use the symbol $C^\infty (G)$ to denote the collection of locally constant distributions on $G$ with a fixed index of locally constancy, that is, $F \in C^\infty (G)$ if $F \in \mathcal{D}'(G)$ and there exists an $l \in \Z$ such that $\tau_y F=F $ for all $y \in G_l$, where the translation $\tau_y$ is defined as $$\langle \tau_y F, f \rangle := \langle F , f (\cdot y^{-1}) \rangle.$$We will also use the notation $$C^\infty_l (G) := \{ F \in C^\infty(G) \esp : \esp \tau_y F =F \esp \text{for } \esp y \in G_l \}.$$
    \end{itemize}
\end{defi}

\begin{rem}\label{RemtopologyD(G)}
In order to make $\mathcal{D}(G)$ a topological vector space, we will say that a sequence $\{\varphi_j \}_{j \in \N} \subset \mathcal{D}(G)$ converges to $0$ if and only if: 
\begin{enumerate}
    \item There exists $l,n \in \Z$ such that $\{\varphi_j\}_{j \in \N} \subseteq \mathcal{D}_l (G_n)$.
    \item $\varphi_j \to 0$ uniformly when $j \to \infty$.
\end{enumerate}With this topology we find that every linear functional on $F : \mathcal{D} (G) \to \C$ is continuous or, in other words, the dual of $\mathcal{D}(G)$ as topological vector space coincide with its algebraic dual. 
\end{rem}

Now we recall briefly some elements of the representation theory of t.d. groups that we will use in what follows. 
\begin{defi}
    Let $G$ be a locally compact Vilenkin group with a sequence of compact open subgroups $\{G_n \}_{n \in \Z}$.
    \begin{itemize}
        \item A representation $(\pi , V)$ is called a \emph{smooth representation} if the stabilizer in $G$ of any vector $\varphi \in V$ is open.
        \item A smooth representation $(\pi, V)$ is called \emph{admissible} if for every open subgroup $H \leq G$ the subspace $$V^H:=\{ \varphi \in V \esp : \esp \pi (h) \varphi = \varphi \esp \esp \text{for all} \esp h \in H \} \subset V$$is finite dimensional.
        \item We will denote by the set of all equivalence classes of (algebraically) irreducible admissible representations of $G$ by $\tilde{G}$. The set of all equivalence classes of topologically irreducible unitary representations (on  Hilbert  spaces) is denoted by $\widehat{G}$. It is well known that the unitary dual $\widehat{G}$ is in a natural bijection with the subset of  all unitarizable classes in $\tilde{G}$. In this way we shall identify $\widehat{G}$ with the subset of all unitarizable classes in $\tilde{G}$. Also, we will use $Rep(G)$ to denote the collection of smooth, admissible, unitary representations of $G$.
        \item Let $(\pi ,V_\pi)$ be an smooth admissible representation of $G$. We define the Fourier transform of $f \in L^1 (G) \cap L^2 (G)$ with respect to $(\pi, V_\pi)$ as the linear operator $\widehat{f}(\pi)$ acting on $V_\pi$ via the formula $$\mathcal{F}_G[f](\pi) = \widehat{f}(\pi)v = \int_G f(x) \pi^* (x) v \esp dx, \esp \esp \esp v \in V_\pi. $$The measurable field $\{\widehat{f}(\pi)\}_{\pi \in \widehat{G}}$ will be called the Fourier transform of $f$. 
        \item We will denote by $\mathcal{D}(\widehat{G})$ the collection of measurable fields $\{\sigma(\pi) \}_{\pi \in \widehat{G}}$ such that there is an $l \in \Z$ so that for almost all $[\pi] \in \widehat{G}$ it holds $Im \esp \sigma(\pi ) \subseteq \mathcal{H}_\pi^{G_l}$.
    \end{itemize}
    \end{defi}
    
\begin{teo}\label{plachereltheolccase}
Let $G$ be a locally compact Vilenkin group together with the sequence of compact open subgroups $\{G_n\}_{n \in \Z}$. Let $dx$ the Haar measure on $G$ that makes $|G_0|=1$. Then, the unitary dual $\widehat{G}$ of $G$ is equipped with a measure $\nu$, called the Plancherel measure, such that: the operator $\widehat{f}(\pi)$ is of trace class for any $f \in \mathcal{D}(G)$ and any smooth admissible irreducible representation $\pi \in \widehat{G}$, and $Tr(\widehat{f} (\pi))$  depends only on the class of $\pi$. The function $\pi \mapsto Tr(\widehat{f} (\pi))$ is integrable against $\nu$ and the following formula holds: 
$$f(e)=\int_{\widehat{G}} Tr(\widehat{f}(\pi)) d\nu (\pi).$$In particular, for every $x \in G$ the map $ \pi \mapsto Tr(\pi(x)\widehat{f} (\pi) ) $ is integrable and $$f(x) = \int_{\widehat{G}} Tr(\pi(x)\widehat{f} (\pi) ) d \nu (\pi)$$ 
\end{teo}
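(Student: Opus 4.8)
The plan is to read off the statement from the abstract Plancherel theorem for second countable, unimodular, type~I locally compact groups, and then to refine the resulting $L^2$-level identity into the pointwise inversion formula on $\mathcal{D}(G)$ by using the profinite structure. First I would record that $G$ is second countable: it is $\sigma$-compact, being the union of the compact open subgroups $G_n$, and metrisable by the Birkhoff--Kakutani theorem, hence separable and therefore second countable. Combined with the standing hypotheses that $G$ is unimodular and of type~I, this yields a Plancherel measure $\nu$ on $\widehat{G}$, a unitary identification $L^2(G)\simeq\int_{\widehat{G}}^{\oplus} HS(\mathcal{H}_\pi)\,d\nu(\pi)$ given by $f\mapsto(\widehat{f}(\pi))_{[\pi]\in\widehat{G}}$ (with $HS(\mathcal{H}_\pi)$ the Hilbert--Schmidt operators on $\mathcal{H}_\pi$), and in particular the Parseval identity
\[
\langle f,g\rangle_{L^2(G)}=\int_{\widehat{G}} Tr\!\big(\widehat{f}(\pi)\widehat{g}(\pi)^*\big)\,d\nu(\pi),\qquad f,g\in\mathcal{D}(G)\subset L^1(G)\cap L^2(G).
\]
Everything after this is the identification of $Tr(\widehat{f}(\pi))$ for $f\in\mathcal{D}(G)$ and the evaluation of its $\nu$-integral.

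The device I would use throughout is the idempotent $\epsilon_{G_l}=|G_l|^{-1}\1_{G_l}\in\mathcal{D}(G)$. A short computation identifies $\widehat{\epsilon_{G_l}}(\pi)=|G_l|^{-1}\int_{G_l}\pi(x)\,dx=:P_l^\pi$ with the orthogonal projection of $\mathcal{H}_\pi$ onto the subspace $\mathcal{H}_\pi^{G_l}$ of $G_l$-fixed vectors, which is finite-dimensional since $[\pi]\in\widehat{G}$ is admissible; applying the Plancherel identity to $\epsilon_{G_l}$ then gives $\int_{\widehat{G}}\dim\mathcal{H}_\pi^{G_l}\,d\nu(\pi)=\int_{\widehat{G}}\|P_l^\pi\|_{HS}^2\,d\nu(\pi)=\|\epsilon_{G_l}\|_{L^2(G)}^2=|G_l|^{-1}<\infty$. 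Now let $f\in\mathcal{D}(G)$ and let $l$ be such that $f(xy)=f(x)$ for all $y\in G_l$; then $f=f*\epsilon_{G_l}$, and taking Fourier transforms, using $\widehat{u*v}(\pi)=\widehat{v}(\pi)\widehat{u}(\pi)$, we get $\widehat{f}(\pi)=\widehat{\epsilon_{G_l}}(\pi)\widehat{f}(\pi)=P_l^\pi\widehat{f}(\pi)$. Thus the range of $\widehat{f}(\pi)$ lies in the finite-dimensional space $\mathcal{H}_\pi^{G_l}$, so $\widehat{f}(\pi)$ is of finite rank, hence of trace class, for \emph{every} $[\pi]\in\widehat{G}$; that $Tr(\widehat{f}(\pi))$ depends only on the class of $\pi$ (and does so measurably for the Borel structure on $\widehat{G}$) is immediate from $\widehat{f}(\pi')=U\widehat{f}(\pi)U^*$ for a unitary intertwiner $U$ and the invariance of the trace under conjugation. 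For integrability I would combine the estimate $|Tr(\widehat{f}(\pi))|=|Tr(\widehat{\epsilon_{G_l}}(\pi)\widehat{f}(\pi))|\le\|\widehat{\epsilon_{G_l}}(\pi)\|_{HS}\|\widehat{f}(\pi)\|_{HS}$ with the Cauchy--Schwarz inequality on $L^2(\widehat{G},\nu)$ to get $\int_{\widehat{G}}|Tr(\widehat{f}(\pi))|\,d\nu\le|G_l|^{-1/2}\|f\|_{L^2(G)}<\infty$.

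For the inversion formula at the identity I would observe that, since $G_l$ is a symmetric subgroup and $\epsilon_{G_l}$ is real-valued, $\epsilon_{G_l}(y^{-1})=\overline{\epsilon_{G_l}(y)}$, whence
\[
f(e)=(f*\epsilon_{G_l})(e)=\int_G f(y)\,\epsilon_{G_l}(y^{-1})\,dy=\langle f,\epsilon_{G_l}\rangle_{L^2(G)}.
\]
Feeding this into the Parseval identity and using $(P_l^\pi)^*=P_l^\pi$, $P_l^\pi\widehat{f}(\pi)=\widehat{f}(\pi)$, and cyclicity of the trace collapses the integrand to $Tr(\widehat{f}(\pi))$, giving $f(e)=\int_{\widehat{G}}Tr(\widehat{f}(\pi))\,d\nu(\pi)$. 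For a general $x\in G$ I would apply this to the right translate $z\mapsto f(zx)$, which again belongs to $\mathcal{D}(G)$ (it is invariant under the open subgroup $xG_lx^{-1}\supseteq G_{l'}$ for suitable $l'$) and whose Fourier transform equals $\pi(x)\widehat{f}(\pi)$ by a change of variables; since its value at $e$ is $f(x)$, this yields $f(x)=\int_{\widehat{G}}Tr(\pi(x)\widehat{f}(\pi))\,d\nu(\pi)$, the integrability of $\pi\mapsto Tr(\pi(x)\widehat{f}(\pi))$ being inherited from the previous step.

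The place where something beyond routine manipulation enters is conceptual rather than computational: the whole argument rests on having the abstract Plancherel decomposition available --- which is exactly what the type~I hypothesis buys --- and on the admissibility of the irreducible unitary representations of $G$, which is what forces $\dim\mathcal{H}_\pi^{G_l}<\infty$ and hence the trace-class property for \emph{every} $\pi$. If one preferred not to take admissibility for granted, the inequality $\int_{\widehat{G}}\dim\mathcal{H}_\pi^{G_l}\,d\nu<\infty$ already gives $\dim\mathcal{H}_\pi^{G_l}<\infty$ for $\nu$-almost every $\pi$, which suffices for all the displayed integral identities; the remaining ingredients --- the behaviour of $\widehat{\epsilon_{G_l}}$, the identity $f=f*\epsilon_{G_l}$, and the translation argument --- are standard $p$-adic harmonic analysis.
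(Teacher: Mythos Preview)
The paper states this theorem without proof, treating it as a known result from abstract harmonic analysis and moving directly to Remark~\ref{remfourierdistributions}. Your proposal therefore does not compete with an argument in the paper but rather supplies one, and the argument you give is correct: invoking the abstract Plancherel theorem (available because $G$ is second countable, unimodular, and type~I), then exploiting the idempotent $\epsilon_{G_l}$ to force $\widehat f(\pi)=P_l^\pi\widehat f(\pi)$ to have finite rank, and finally recovering the pointwise inversion formula from the $L^2$ Parseval identity via $f(e)=\langle f,\epsilon_{G_l}\rangle$ and a translation. Your closing remark that $\nu$-a.e.\ finiteness of $\dim\mathcal H_\pi^{G_l}$ already follows from $\|\epsilon_{G_l}\|_{L^2}^2<\infty$, independently of any admissibility hypothesis, is a nice observation and makes the argument self-contained. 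One cosmetic point: when you pass to general $x$, the right translate $g(z)=f(zx)$ is right-invariant under $xG_lx^{-1}$, which is open and hence contains some $G_{l'}$ because the $G_n$ form a neighbourhood basis at $e$; you say this, but note that in the locally compact definition used in the paper the $G_n$ are \emph{not} assumed normal, so this step genuinely uses the neighbourhood-basis property rather than normality.
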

\begin{rem}\label{remfourierdistributions}
We can extend our definition of Fourier transform to the elements of $\mathcal{D}'(G)$ in the following way: for $F \in \mathcal{D}'(G)$ define define its Fourier transform $\mathcal{F}_G F \in \mathcal{D}(\widehat{G})$ as $$\langle \mathcal{F}_G [F] , \widehat{h} \rangle := \langle F , \iota \circ \mathcal{F}_G^{-1} f \rangle, \esp \esp h \in \mathcal{D}(\widehat{G}),$$where $\iota \circ f (x) := f (x^{-1})$. 
\end{rem}

\subsection{Fundamental solution: non-compact case}
Now that we know the fundamental solution in the compact case we can move to the the locally compact case. Let us take $f \in \mathcal{D} (G_n)$ so $f(x)=0$ for $x \in G \setminus G_n.$ Recall that $\1_A$ denotes the characteristic function in $A$. 
We want to to prove that $E_\alpha (x)$ in Theorem \ref{teofundamentalsolVTopLC} is a fundamental solution for $D^\alpha$, that is $D^\alpha * E_\alpha = \delta_e$ on $ \mathcal{D} '(G)$. To do that we will show that almost everywhere on $G$ and for any $\varepsilon>0$ we have $|f * D^\alpha * E_\alpha(x) - f(x)|  < \varepsilon$ so that $f * D^\alpha * E_\alpha(x) = f(x)$. The argument here is that we can write the action of the Vladimirov-Taibleson operator on $f$ in terms of the action of the operators $\mathbb{D}^\alpha$ in the following way \begin{align*}
    \int_G \frac{f(xy^{-1}) - f(x)}{|y|_{\mathscr{G}}^{ \alpha + 1}} dy &= \1_{G_n} (x) \int_{G_n} \frac{f(xy^{-1}) - f(x)}{|y|_{\mathscr{G}}^{ \alpha + 1}} dy   + \1_{ G_n} (x) \int_{G\setminus G_n} \frac{f(xy^{-1}) - f(x)}{|y|_{\mathscr{G}}^{ \alpha + 1}} dy \\ & \esp + \1_{G \setminus G_n} (x) \int_{G_n} \frac{f(xy^{-1}) - f(x)}{|y|_{\mathscr{G}}^{ \alpha + 1}} dy + \1_{G \setminus G_n} (x) \int_{G \setminus G_n} \frac{f(xy^{-1}) - f(x)}{|y|_{\mathscr{G}}^{ \alpha + 1}} dy.
\end{align*}
Now we proceed by parts: 
\begin{itemize}
    \item The first integral can be rewritten as: $$\1_{G_n} (x) \int_{G_n} \frac{f(xy^{-1}) - f(x)}{|y|_{\mathscr{G}}^{ \alpha + Q}} dy = \varkappa^{n \alpha} \1_{G_n} (x) \int_{G_n} \frac{f(xy^{-1}) - f(x)}{|y|_{\mathscr{G}_n}^{ \alpha + Q}} d_ny.$$
    \item If $x \in G_n$ and $y \in G \setminus G_n$, then $x y^{-1} \in G \setminus G_n$ so $$\1_{ G_n} (x) \int_{G\setminus G_n} \frac{f(xy^{-1}) - f(x)}{|y|_{\mathscr{G}}^{ \alpha + 1}} dy = - f(x) \int_{G \setminus G_n} |y|_{\mathscr{G}}^{-(\alpha + 1)} dy = \frac{1 - \varkappa^{-1}}{1 - \varkappa^\alpha} \varkappa^{n \alpha} f(x). $$
    \item If $x \in G \setminus G_n$ and $y \in G_n$ then $f(xy^{-1}) = f(x) =0$, so $$\1_{G \setminus G_n} (x) \int_{ G_n} \frac{f(xy^{-1}) - f(x)}{|y|_{\mathscr{G}}^{ \alpha + 1}} dy=0.$$
    \item If $x \in G \setminus G_n$ and $y \in G \setminus G_n$,  the integral is non-zero if and only if $xy^{-1} \in G_n$, and when this happens $|x|_G = |y|_G$. Then we get \begin{align*}
        \1_{ G \setminus G_n} (x) \int_{G\setminus G_n} \frac{f(xy^{-1}) - f(x)}{|y|_{\mathscr{G}}^{ \alpha + 1}} dy &= \1_{G \setminus  G_n} (x) \int_{G \setminus G_n } \frac{f(xy^{-1}) }{|y|_{\mathscr{G}}^{ \alpha + 1}} dy \\ &= \1_{G \setminus  G_n} (x) |x|_{\mathscr{G}}^{-(\alpha +1)} \int_{x G_n} f(xy^{-1}) dy \\ & = \1_{G \setminus  G_n} (x) |x|_{\mathscr{G}}^{-(\alpha +1)} \int_{ G_n} f(y) dy.
    \end{align*}
\end{itemize}
Summing up: 
\begin{align*}
    D^\alpha f(x) = \varkappa^{n \alpha} \mathbb{D}_n^{\alpha} f(x) +  \frac{1 - \varkappa^\alpha}{1 - \varkappa^{- (\alpha + 1)}} \1_{G \setminus  G_n} (x) |x|_{\mathscr{G}}^{-(\alpha +1)} \int_{G_n} f(y) dy,\end{align*} and in particular for $f \in \tilde{\mathcal{D}}(G_n)$ we have $D^\alpha f = \varkappa^{ \alpha} \mathbb{D}^\alpha_n f$. Moreover, the previous equalities hold true if we replace $n$ for any $l$ such that $l \leq n$. Using this calculation we are now in position to prove Theorem \ref{teofundamentalsolVTopLC}. 
\begin{proof}[Proof of Theorem \ref{teofundamentalsolVTopLC}:]
Let us start by remarking that for $f \in \tilde{\mathcal{D}}(G_n)$ it holds $D^\alpha f = \varkappa^{l \alpha} \mathbb{D}_l^\alpha f$ for any $l \leq n$. If we fix $x\in G$ and choose $l \leq n$ so that $x \in G_l$ we obtain  \begin{align*}
    D^\alpha f * E_\alpha ( x) &=\frac{1 - \varkappa^{-\alpha}}{1 - \varkappa^{\alpha-1}} \varkappa^{l \alpha} \int_G \mathbb{D}_l^\alpha f (y) |y^{-1} x|_{\mathscr{G}}^{\alpha -1} dx \\ & =\frac{1 - \varkappa^{-\alpha}}{1 - \varkappa^{\alpha-1}}\varkappa^{l \alpha} \int_{G_l} \mathbb{D}_l^\alpha f (y) |y^{-1} x|_{\mathscr{G}}^{\alpha -1} dx \\ & =\frac{1 - \varkappa^{-\alpha}}{1 - \varkappa^{\alpha-1}} \varkappa^{l \alpha} \int_{G_l} \mathbb{D}_l^\alpha f (y)(\varkappa^{-l(\alpha -1)}) |y^{-1} x|_{\mathscr{G}_l}^{\alpha -1} dx \\ &=\frac{1 - \varkappa^{-\alpha}}{1 - \varkappa^{\alpha-1}}   \int_{G_l} \mathbb{D}_l^\alpha f (y) |y^{-1} x|_{\mathscr{G}_l}^{\alpha -1} d_l  x= f(x).
\end{align*}
For more general $f \in \mathcal{D} (G_n)$ and $l \leq n$ something similar holds and we only need to observe that $$\frac{1 - \varkappa^{-\alpha}}{1 - \varkappa^{\alpha-1}}\frac{1 - \varkappa^\alpha}{1 - \varkappa^{- (\alpha + 1)}} \int_{G} \1_{G \setminus  G_l} (z) |z|_{\mathscr{G}}^{-(\alpha +1)} |z^{-1} x|_{\mathscr{G}}^{\alpha -1} dz \int_{G_n} f(y) dy \to 0,$$as $l \to - \infty$. This concludes the proof.   
\end{proof}
\subsection{The isotropic Laplace operator}
To conclude this section we give a quick review of some of the results exposed in \cite{DirichletForms, IsoMarkovSemi} and the way they can be applied to graded $\K$-Lie groups. Specially, we are interested in the heat kernel estimates obtained in \cite{IsoMarkovSemi} for the isotropic Laplace operator. Since the arguments in \cite{IsoMarkovSemi} work on general ultrametric measure spaces we can apply them in particular to graded $\K$-Lie groups and obtain in this way an estimate for the heat kernel of the Vladimirov-Taibleson operator and some other interesting homogeneous operators. We will focus exclusively on graded $\K$-Lie groups but our analysis in this section extends naturally to constant-order Vilenkin groups.

Let $G$ be a graded $\K$-Lie group and consider the ultrametric measure space $(G, d_\alpha , dx)$, where $dx$ is the normalised Haar measure on $G$ and $$d_\alpha (x,y):= \frac{|y^{-1}x|_G^{\alpha}}{q^\alpha}, \esp \esp \alpha >0,$$so that the balls in this metric have the property $$ |B_{\alpha}(x,r)| \asymp r^{\frac{Q}{\alpha}}.$$Our goal is to get some estimate on the heat kernel associated to $\mathscr{D}^\alpha$, $\alpha>0$, but, instead of starting with $\mathscr{D}^\alpha$ and its heat semigroup, the authors in \cite{IsoMarkovSemi} begin by constructing a Markov semigroup associated to a triple $(\rho, d_\alpha , dx)$, where $\rho$ is a function $\rho : [0, \infty] \to [0,1]$ strictly monotone increasing  and left-continuous such that $\rho (0+)=0$ and $\rho(\infty )=1$. The heat kernel associated to this triple has in fact several very nice explicit forms that can be used to obtain the desired estimates, as it is shown in \cite[Theorem 2.14]{IsoMarkovSemi}. In particular, if we pick the function $\rho (r) := e^{- r^{-1}}$, the infinitesimal generator of the Markov semigroup associated to the triple $(\rho, d_\alpha , dx)$ coincides with $\mathscr{D}^\alpha$, so that their heat kernels coincide and we obtain the estimates for the heat kernel of $\mathscr{D}^\alpha$. For the sake of completeness, let us give more details about this construction.

Define the family of orthoprojectors $\{ P_r \}_{r \in [0,\infty )}$ by  $$P_r f (x) = \frac{1}{|B_\alpha (x,r)|} \int_{B_\alpha (x,r)} f (y) dy, \esp \esp r>0,  $$and $P_0 = 0$, where $B_\alpha (x,r)$ denotes the ball of radius $r >0$ with center in $x \in G$. Then for any function $\rho : [0, \infty] \to [0,1]$ strictly monotone increasing  and left-continuous such that $\rho (0+)=0$ and $\rho(\infty )=1$, the formula $$P f (x) := \int_0^\infty P_r f (x) d \rho (r), $$defines a Markov operator on Borel bounded functions, and a bounded operator on $L^2 (G)$. The non-negative powers of this operator are given by $$P^t f (x) := \int_0^\infty P_r f (x) d \rho^t (r), $$and they form a strongly continuous symmetric Markov semigroup on $L^2 (G)$. If we pick the function $\rho (r) := e^{- r^{-1}}$ then for any $t>0$ these operators admit an integral kernel called the \emph{
heat kernel of the semigroup}. For this kernel the following expression is known: $$ h_\alpha (t,x,y)= \int_{d_\alpha (x,y)}^\infty \frac{ d \rho^t (r) }{|B_\alpha (x,r)|},$$and the following estimate holds true \cite[Theorem 2.14]{IsoMarkovSemi}:$$h_\alpha (t,x,y) \asymp \frac{t}{(t^{1/\alpha} + |y^{-1}x|_G)^{ \alpha + Q }}.$$We can construct a Dirichlet form $(\mathcal{E}, Dom_\mathcal{E} )$ from the heat kernel $h_\alpha$ via the formula $$\mathcal{E}(f,f):= \lim_{t \to 0} \frac{1}{2t} \int_G \int_G h_\alpha (t,x,y)(f(x) - f(y))^2 dx dy,$$whose generator coincides with the infinitesimal generator of the semigroup $P_\rho^t$, which is a self-adjoint unbounded operator $\mathcal{L}_\alpha$ on $L^2 (G)$ given by $$\mathcal{L}_\alpha f = \lim_{t \to 0 } \frac{P^t f - f}{t}.$$The authors in \cite{IsoMarkovSemi} refer to this operator as the \emph{isotropic Laplace operator} associated with $(\rho, d_\alpha, dx )$. Using the polarization identity we can write this non-local Dirichlet forms in terms of a kernel $J_\alpha(x,y)$ as $$\mathcal{E}(f,g) = \frac{1}{2} \int_G \int_G(f(x) - f(y))(g(x) - g(y)) J_\alpha(x,y) dx dy ,$$with the  kernel $J_\alpha(x,y)$ given by \begin{align*}
    J_\alpha(x,y) &= \int_{d_\alpha (x,y)}^\infty \frac{s^{-2} ds}{ |B_\alpha ( x,s)|} \\ &= \sum_{n=0}^\infty \frac{1}{q^{Q(n+k)}} \int_{q^{\alpha(k+n)}/q^\alpha}^{q^{\alpha(k+n+1)}/q^\alpha} d(-1/r) \\ &=\frac{q^\alpha}{q^{(Q+\alpha)k}}\sum_{n=0}^\infty \frac{1}{q^{n(Q + \alpha)}}\big(1-\frac{1}{q^\alpha} \big) = - \frac{1-q^{\alpha}}{1-q^{-(\alpha + Q)}} | y^{-1}x|_G^{-(\alpha + Q)},
\end{align*}where in the above calculations we took $d_\alpha (x,y) = q^{\alpha k}/q^\alpha$ fixed. According to \cite[Theorem 3.2]{IsoMarkovSemi} the isotropic Laplace operator $\mathcal{L}_\alpha$ acts on functions $f \in \mathcal{D}(G)$ as $$\mathcal{L}_\alpha f (x) = \int_G f(x) - f(y) J_\alpha (x,y) dy,$$so we actually have the equality $\mathcal{L}_\alpha = \mathscr{D}^\alpha$, the heat kernel $h_\alpha (t,x,y) $ is the heat kernel of $\mathscr{D}^\alpha$ and it actually has the form $h_\alpha (t,x,y)= h_\alpha (t,y^{-1}x) $. We can try to obtain the fundamental solution for $\mathscr{D}^\alpha$ by integrating $h_\alpha$ in the variable $t\in (0 , \infty)$, but in fact this works if and only if the heat semigroup is transient which is equivalent to $0<\alpha <Q $. In such case it holds $$E_\alpha (x) = \int_0^\infty h_\alpha (t,x) dt = \frac{1-q^{-\alpha}}{1-q^{\alpha - Q}} |x|_G^{\alpha - Q},$$which gives an alternative proof of Theorem \ref{fundamentalsolutiongradedliegroups} for the special case $0<\alpha<Q$.

\section{The Heisenberg group}

\subsection{Homogeneous operators on $\Q_p^d$}
As we remarked in the introduction, the Vladimirov-Taibleson operator defines an homogeneous operator on graded $\K$-Lie groups. Of course there are many other interesting examples of homogeneous operators, and it is our purpose to introduce some of them along this section. A nice way to explain our ideas is by revisiting the commutative case, so we begin by recalling some properties of invariant $\nu$-homogeneous operators on $\Q_p^d$, $\nu >0$. Via Fourier analysis invariant $\nu$-homogeneous operators, that is operators satisfying $T_\sigma (f(\gamma x)) = |\gamma|_p^\nu T_\sigma f ( \gamma x)$ for $\gamma \in \Q_p^*$, can be written in the form $$T_\sigma  f (x) = \int_{\Q_p^d} \sigma (\xi) \widehat{f}(\xi) e^{2 \pi i \{x \cdot \xi  \}_p}d \xi,$$where $\sigma : \Q_p^d \to \C$ is a $\nu$-homogeneous function. If $T_\sigma$ is positive, elliptic and homogeneous of positive degree, which is equivalent to $\sigma (\xi ) \geq 0$ and $\sigma (\xi)=0$ iff $\xi = 0$, we can extend $T_\sigma$ to a self adjoint operator and we can use its associated functional calculus to define the heat semigroup $$e^{- t T_\sigma} f (x) = \int_{\Q_p^d} e^{- t \sigma (\xi)} \widehat{f}(\xi) e^{2 \pi i \{x \cdot \xi  \}_p}d \xi,$$and obtain from its kernel the fundamental solution and the Riesz potential associated to $T_\sigma$. The simplest example of this construction occurs with the Vladimirov-Taibleson operator $\mathscr{D}^\alpha$, an $\alpha$-homogeneous operator on $\Q_p^d$ which via Fourier transform can be expressed as $$\mathscr{D}^\alpha  f (x) = \int_{\Q_p^d} \| \xi \|_p^\alpha \widehat{f}(\xi) e^{2 \pi i \{x \cdot \xi  \}_p}d \xi.$$The heat kernel for this operator is given by $$h_\alpha (t,x) = \int_{\Q_p^d} e^{- t \| \xi \|_p^\alpha} \widehat{f}(\xi) e^{2 \pi i \{x \cdot \xi  \}_p}d \xi,$$ and it is studied in \cite{RodriguezJ}, where the reader can find a detailed analysis of the Cauchy problem associated to the Taibleson operator. The same analysis and similar estimates as in \cite{RodriguezJ} hold true for more general positive elliptic homogeneous operators, and we can use their heat kernels to obtain, at least formally, their fundamental solutions in an standard way. That is, if $h_\sigma (t,x)$ is the heat kernel of $T_\sigma$, a fundamental solution for $T_\sigma$ is given by $$\textbf{h}_\sigma (x) = \int_0^\infty h_\sigma (t,x)dt.$$Moreover, under the hypothesis that the heat kernel satisfies some reasonable estimate, our guess is $h_\sigma (t,x) \asymp  t^{d/\nu}$ as for the Vladimirov-Taibleson operator, we can also define fractional powers of $T_\sigma$ and obtain their fundamental solution by means of the Riesz potential $$\mathcal{I}_a (x): = \frac{1}{\Gamma(a/\nu) } \int_0^\infty t^{a/\nu - 1 }h_\sigma (t,x) dt,  \esp \esp 0< \mathfrak{Re}(a) < d.$$

\begin{rem}
A remarkable property of these kernels is that they are homogeneous, as we can check by using the homogeneity of $T_\sigma$ and the properties of the heat kernel. First, for $\textbf{h}_\sigma$ we obtain with a couple of changes of variable \begin{align*}
    \textbf{h}_\sigma (\gamma x) & = \int_0^\infty \int_{\Q_p^d} e^{- t \sigma (\xi)}  e^{2 \pi i \{ \gamma x \cdot \xi  \}_p}d \xi dt \\  & = |\gamma|_p^{-d} \int_0^\infty \int_{\Q_p^d} e^{- t |\gamma|_p^{-\nu}\sigma (\xi)}  e^{2 \pi i \{  x \cdot \xi  \}_p}d \xi dt \\&= |\gamma|_p^{\nu-d} \int_0^\infty \int_{\Q_p^d} e^{- t \sigma (\xi)}  e^{2 \pi i \{  x \cdot \xi  \}_p}d \xi dt= |\gamma|_p^{\nu-d} \textbf{h}_\sigma ( x), 
\end{align*} 
Similarly we have for $\mathcal{I}_a$: \begin{align*}
    \mathcal{I}_a (\gamma x)&= \frac{1}{\Gamma(a/\nu) } \int_0^\infty \int_{\Q_p^d}  t^{a/\nu - 1 }e^{- t \sigma (\xi)}  e^{2 \pi i \{ \gamma x \cdot \xi  \}_p}d \xi dt \\ & =|\gamma|_p^{-d} \frac{1}{\Gamma(a/\nu) } \int_0^\infty \int_{\Q_p^d}  t^{a/\nu - 1 }e^{- t |\gamma|_p^{- \nu} \sigma (\xi)}  e^{2 \pi i \{  x \cdot \xi  \}_p}d \xi dt \\ & = |\gamma|_p^{a-d} \frac{1}{\Gamma(a/\nu) } \int_0^\infty \int_{\Q_p^d}  t^{a/\nu - 1 }e^{- t  \sigma (\xi)}  e^{2 \pi i \{  x \cdot \xi  \}_p}d \xi dt  = |\gamma|_p^{a-d}\mathcal{I}_a (x).
\end{align*}
\end{rem}
The same ideas exposed above for the case $G=\Q_p^d$ are applicable when we move to the noncommutative case, with some important modifications. The main one is that for noncommutative groups the representation theory, and therefore the Fourier analysis, is more complicated and for that reason it is necessary to appeal to more advanced arguments. Still we can study pseudo-differential operators on noncommutative groups and we will focus now our attention in a very particular class of homogeneous operators that we call here \emph{directional VT operators.} These are operators defined in terms of the elements of the Lie algebra of a graded $\K$-Lie group reassembling the definition of directional derivatives on Lie groups.  
\begin{defi}\normalfont
Let $G$ be a graded $\K$-adic Lie group with Lie algebra $\mathfrak{g}$. We define the \emph{directional VT operator} in the direction of $X \in \mathfrak{g}$ by the formula $$\partial_X^\alpha f (x) :=  \frac{1 - q^\alpha}{1 - q^{- (\alpha + 1)}} \int_{\K} \frac{f(x \cdot \mathbb{exp}(tX)^{-1}) - f(x)}{|t |_\K^{\alpha + 1}} dt .$$
\end{defi}

\begin{rem}
 A property we want to remark here is the homogeneity of the directional VT operators: for $\gamma \in \K$ and $X \in V_{\nu_k} \subset \mathfrak{g}$ we have \begin{align*}
    \partial_X^\alpha (f \circ D_\gamma) (u) &= \frac{1 - q^\alpha}{1 - q^{- (\alpha + 1)}} \int_{\K} \frac{f(\gamma(u \cdot  \mathbb{exp}( tX)^{-1})) - f(\gamma u)}{|t |_\K^{\alpha + 1}} dt \\ &= |\gamma|_\K^{\nu_k(\alpha + 1)} \frac{1 - q^\alpha}{1 - q^{- (\alpha + 1)}} \int_{\K} \frac{f(\gamma u \cdot  \mathbb{exp}( \gamma^{\nu_k} tX)^{-1})) - f(\gamma u)}{| \gamma^{\nu_k} t |_\K^{\alpha + 1}} dt\\ &= |\gamma|_\K^{\alpha \nu_k } \frac{1 - q^\alpha}{1 - q^{- (\alpha + 1)}}\int_{\K} \frac{f(\gamma u  \cdot  \mathbb{exp}( tX)^{-1}) ) - f(\gamma u)}{|  t |_\K^{\alpha + 1}} dt \\ &= |\gamma|_\K^{\alpha \nu_k} (\partial^\alpha_X f) \circ D_\gamma (u). 
\end{align*}
\end{rem}

It is important to remark that the assignation of a  VT operator to the elements of the Lie algebra is not linear, and therefore it doesn't preserve the Lie algebra structure. This is one of the biggest differences with the theory of real Lie groups: in the totally disconnected case we do not have an infinitesimal representation, and neither a correspondence between the Lie algebra and some notion of derivatives for functions $f: G \to \C$. Despite this fact the directional VT operators and the polynomials in the directional VT operators are interesting on their own, and there is a particular example that has been studied before in the literature called \emph{the Vladimirov Laplacian.} This operator is defined initially on $\mathcal{D}(\Q_p^d)$ as $$ \MO^\alpha := \sum_{j=1}^d \partial^\alpha_{e_j}=\sum_{j=1}^d \partial^\alpha_{x_j} ,$$where $e_1,...,e_d$ is the usual basis of $\Q_p^d$ and, as the authors in \cite[Section 5.3.1]{IsoMarkovSemi} pointed out, it has the following properties: 
\begin{itemize}
    \item $(\MO^\alpha, \mathcal{D}(\Q_p^d))$ is a non-negative symmetric, essentially self-adjoint operator.
    \item The semigroup $e^{-t \MO^\alpha}$ is symmetric and Markovian, and it admits a heat kernel $h_{\MO^\alpha} (t,x)$ with the property $$h_{\MO^\alpha} (t,x) \asymp t^{-d/\alpha}.$$
    \item The semigroup is transient if and only if $ 0< \alpha < d$. 
    \item For all $f \in \mathcal{D}(\Q_p^d)$ $$\MO^\alpha f (x) = \int_{\Q_p^d} (f(x) - f(y)) J_\alpha (x , dy),$$where $$J_\alpha (x,dy) = \sum_{j=1}^d J_\alpha (x_j - y_j) d j_j = - \frac{1 - p^\alpha }{1 - p^{-\alpha - 1}} \sum_{j=1}^d|x_j - y_j|_p^{-\alpha - 1} dy_j.   $$
\end{itemize}
As we mentioned in the introduction, for more general graded $\K$-Lie groups the question we want to investigate in this section is whether something similar occurs for an operator like $$\MO^\alpha :=\sum_{k=1}^r \sum_{j=1}^{b_k} \partial_{X_{k,j}}^{\alpha/\nu_k} ,$$where $\{ X_{k,j} \}$ is a Malcev basis associated to the gradation $$\mathfrak{g} = \bigoplus_{k=1}^{r} V_{\nu_k}, \esp \esp b_k := dim(V_{\nu_k}).$$We will show that the answer to this question is affirmative when $G= \mathbb{H}_d$ or $G=\mathbb{E}_4$ but first we need a quick review of some basic facts about harmonic analysis on $\mathbb{H}_d$ and $\mathbb{E}_4$.

\subsection{Harmonic analysis on the Heisenberg group}

One of the simplest noncommutative groups one can think about is the Heisenberg group introduced in Example \ref{exapadicheisenberg}. This $p$-adic Lie group has been extensively studied and we know explicitly a lot of its properties, for instance we can use the Kirilov orbit method to compute explicitly the unitary dual $\widehat{\mathbb{H}}_d (\Q_p)$ of $\mathbb{H}_d (\Q_p)$, or simply $\widehat{\mathbb{H}}_d$ and $\mathbb{H}_d$ for short. Unitary irreducible representations of the $p$-adic Heisenberg group follow the same pattern as in the real case. Given $[\pi] \in \widehat{\mathbb{H}}_d$ we have two possibilities: $\pi$ restricted to the center $\mathcal{Z}(\mathbb{H}_d)$ of $G=\mathbb{H}_d (\Q_p)$ is trivial, or it is not. In the first case, since $\mathbb{H}_d / \mathcal{Z}(\mathbb{H}_d)\cong \Q_p^{2d}$ is abelian, so the representation must have the form $$\pi_{\xi , \eta} (\textbf{x} , \textbf{y} , z) = e^{2 \pi i \{ \xi \cdot \textbf{x} + \eta \cdot \textbf{y} \}_p}, \esp \esp (\xi, \eta) \in \Q_p^{2d}.$$
In the second case it must hold $\pi (\textbf{x}, \textbf{y},z) |_{\mathcal{Z}(\mathbb{H}_d)} = e^{2 \pi i \{ \lambda z\}_p} $ for some $\lambda \in \Q_p^*$, and by the Kirillov orbith method we know that the $p$-adic Schrodinger representations $$\pi_\lambda (\textbf{x} , \textbf{y}, z) = e^{2 \pi i \{ \lambda (z +  \textbf{x} \cdot \textbf{y} + \textbf{y} \cdot u) \}_p}  \varphi (u + x), \esp \esp \varphi \in L^2 (\Q_p^d) ,$$ are unitary irreducible representations of $\mathbb{H}_d$ on the Hilbert space $\mathcal{H}_\lambda := L^2(\Q_p^d)$. Moreover, we can check that any unitary irreducible representation of $\mathbb{H}_d$ non trivial on the center is equivalent to some of the representations $\pi_\lambda$. In this way we can identify the unitary dual $\widehat{\mathbb{H}}_d$ with the set $\Q_p^* \cup \Q_p^{2d}$. There is a Borel measure on $\Q_p^* \cup \Q_p^{2d}$ that can be transferred to $\widehat{\mathbb{H}}_d$ and that assigns measure zero to the elements of $\Q_p^{2d}$. For that reason the relevant representations for us are the ones parameterized by $\Q_p^*$ and we will mostly think on the unitary dual of $\mathbb{H}_d $ as the collection of Schrodinger representations $\pi_\lambda.$ 

\begin{rem}\label{remdilationsheisenberg}
Schrodinger representations have several possible explicit realisations in the real case, that is, for the Heisenberg group over the real numbers. For instance in \cite{Nilpotentliegroups} the authors used a different approach, in such a way that all the Schrodinger representations $\pi_\lambda$ are obtained by composing the representations $\pi_1$ and $\pi_{-1}$ with the dilations $D_\lambda$. We can do a similar thing in the $p$-adic case with the difference that this time we cannot count with the existence of square roots. More precisely, after identifying $\widehat{\mathbb{H}}_d$ with $\Q_p^*$, we can find a finite subset $\widehat{G}_0$ of $\widehat{G}$ such that every $\pi_\lambda$ with $\lambda \in \Q_p^*$ is equivalent to $\pi_{\epsilon} \circ D_{\mu}$ for some $\epsilon \in \widehat{G}_0$ and $\mu \in \Q_p^*$. To see this let us point to the fact that any nonzero $p$-adic number $\lambda$ can be written in one of the following four ways: 
\begin{itemize}
    \item $\lambda = \mu^2$, $\mu \in \Q_p^*$,
    \item $\lambda = s_0 \mu^2$, $\mu \in \Q_p^*$ and $s_0 \in \Z_p \setminus p \Z_p$ is not a square, 

    \item $\lambda = s_0 \mu^2$, $\mu \in \Q_p^*$ and $s_0 \in \Z_p \setminus p \Z_p$ is not a square, 
    \item $\lambda = p s_0 \mu^2$, $\mu \in \Q_p^*$ and $s_0 \in \Z_p \setminus p \Z_p$ is not a square,
    \item $\lambda = p \mu^2$, $\mu \in \Q_p^*$.
\end{itemize}
Fix $L^2 (\Q_p^d)$ as our representation space and consider the linear maps $d_\mu : L^2 (\Q_p^d) \to L^2 (\Q_p^d)$, given by $d_\mu \varphi (u) := \varphi (\mu u )$, $\mu \in \Q_p^*$. Let $\epsilon \in \{ 1 , p , s_0 , p s_0 \},$ where $s_0 \in \Z_p \setminus p \Z_p$ is not a square. Then we can see that the composition of the representation $$\pi_\epsilon (\textbf{x} , \textbf{y}, z) \varphi (u) = e^{2 \pi i \{ \epsilon (z +  \textbf{x} \cdot \textbf{y} + \textbf{y} \cdot u) \}_p}  \varphi (u + x),$$with the dilation $D_\mu$ give us 
\begin{align*}
    (\pi_\epsilon \circ D_\mu )(\textbf{x} , \textbf{y}, z) \varphi (u) &= e^{2 \pi i \{ \epsilon ( \mu^2 z + \frac{\mu^2}{2} \textbf{x} \cdot \textbf{y} + \mu \textbf{y} \cdot u) \}_p}  \varphi (u + \mu x) \\ &= e^{2 \pi i \{ \epsilon \mu^2 (  z +  \textbf{x} \cdot \textbf{y} + \textbf{y} \cdot \frac{u}{\mu}) \}_p} d_\mu \varphi (\frac{u}{\mu} +  x) \\ &= (d_\mu^{-1} \circ \pi_{\epsilon \mu^2} (\textbf{x},\textbf{y},z)\circ d_\mu) \varphi (u).
\end{align*}This shows that $\pi_\epsilon \circ D_\mu$ and $\pi_{\epsilon \mu^2}$ are equivalent representations of $\mathbb{H}_d (\Q_p)$ on $L^2 (\Q_p^d)$. Also, we can obtain all the Schrodinger representations by composing the representations $\pi_\epsilon$, $\epsilon \in  \Q_p^* /(\Q_p^*)^2$, with the dilations of the group. 
\end{rem}
\begin{rem}\label{remsquareroot}
For $\mu, \epsilon$ and $\lambda$ as in the above remark we have the relation $ |\mu|_p = |\epsilon|_p^{-1/2} |\lambda|_p^{1/2} $.
\end{rem}
With the above realisation of the unitary dual of $\mathbb{H}_d$ the group Fourier transform takes the following particular form. Given a function $f \in L^2 (\mathbb{H}_d)$, its Fourier transform $\widehat{f}(\lambda):= \widehat{f}(\pi_\lambda)$ is the measurable field of bounded operators acting on each representation space $\mathcal{H}_\lambda = L^2(\Q_p^d)$ via the formula
\begin{align*}
    \widehat{f}(\lambda) \varphi (u) &:= \int_{\mathbb{H}_d} f(\textbf{x} , \textbf{y} ,z) \pi_\lambda^* (\textbf{x} , \textbf{y}, z) \varphi (u) d \textbf{x} d \textbf{y} dz \\ &= \int_{\Q_p^{2d+1}} f(\textbf{x} , \textbf{y} , z) e^{2 \pi i \{ \lambda( -z + \frac{1}{2} \textbf{x} \cdot \textbf{y} - \textbf{y} \cdot u)\}_p} \varphi (u - \textbf{x})d \textbf{x} d \textbf{y} dz \\ &=\int_{\Q_p^d \times \Q_p^d} \mathcal{F}_{\Q_p^{2d+1}}[f] ( \xi , \lambda(\frac{u+w}{2}) , \lambda)e^{2 \pi i \{ \xi (u-w)\}_p} \varphi (w) dw d \xi. 
\end{align*}For any $\lambda \in \Q_p^*$ the operator $\widehat{f} (\lambda)$ is a Hilbert-Schmidt operator with integral kernel $$K_{f , \lambda} (u,w) = \int_{\Q_p^d } \mathcal{F}_{\Q_p^{2d+1}}[f] (\xi , \lambda(\frac{u+w}{2}) , \lambda)e^{2 \pi i \{ \xi (u-w)\}_p}  d \xi.$$Its Hilbert-Schmidt norm is $$\| \widehat{f}(\lambda) \|_{HS} = |\lambda|_p^{-d/2} \| \mathcal{F}_{\Q_p^{2d+1}}[f] (\cdot ,\cdot, \lambda) \|_{L^2(\Q_p^{2d})},$$so the Plancherel formula takes the form $$\| f \|_{L^2(\mathbb{H}_d)}^2 = \int_{\Q_p^*} \| \widehat{f}(\lambda) \|_{HS}^2 | \lambda |_p^{d} d \lambda.$$Moreover, we can check that the following Fourier inversion formula: $$ f(x,y,z) = \int_{\Q_p^*} Tr[\pi_\lambda (x,y,z) \widehat{f}(\lambda)] |\lambda|_p^d d\lambda. $$
We are particularly interested in functions $f \in \mathcal{D}(\mathbb{H}_d)$ because of the nice properties of their Fourier transform. We collect some of them in the following proposition. 

\begin{pro}\label{prolocallyconstfuncHd}
For $f \in \mathcal{D}(\mathbb{H}_d)$ the following properties hold: 
\begin{itemize}
    \item[(i)] For every $\lambda \in \Q_p^*$ the operator $\widehat{f}(\lambda)$ has finite rank and therefore it is trace class.
    \item[(ii)] There is an $n_f \in \Z$ such that $\widehat{f}(\lambda) = 0$ for $|\lambda|_p > p^{n_f}.$
    \item[(iii)] If additionally $f \in \tilde{\mathcal{D}}(\mathbb{H}_d)$ then there is an $l_f \in \Z$ such that $\widehat{f}(\lambda) = 0$ for $|\lambda|_p < p^{l_f}$.
\end{itemize}
\end{pro}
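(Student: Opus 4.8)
My plan is to read off all three properties from the explicit Hilbert--Schmidt kernel
$$K_{f,\lambda}(u,w)=\int_{\Q_p^d}\mathcal{F}_{\Q_p^{2d+1}}[f]\big(\xi,\lambda\tfrac{u+w}{2},\lambda\big)e^{2\pi i\{\xi(u-w)\}_p}\,d\xi$$
of $\widehat f(\lambda)$ established above, using that for $f\in\mathcal{D}(\mathbb{H}_d)$ the Euclidean Fourier transform $\mathcal{F}_{\Q_p^{2d+1}}[f]$ is again a Bruhat--Schwartz function, i.e.\ locally constant with compact support on $\Q_p^{2d+1}$. First I would fix an integer $A$ with $\operatorname{supp}\mathcal{F}_{\Q_p^{2d+1}}[f]\subseteq\{|\xi|_p,|\eta|_p,|\tau|_p\le p^{A}\}$ and an index of local constancy, and keep track of these two parameters. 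For (i), the inner $\xi$-integral is an inverse partial Fourier transform in the first block of coordinates, so $K_{f,\lambda}$ depends on $u-w$ only through a compactly supported locally constant function of $u-w$; simultaneously the middle slot forces $\lambda\tfrac{u+w}{2}\in\{|\eta|_p\le p^{A}\}$, which confines $u+w$ to a ball once $\lambda\neq0$ is fixed. Hence $K_{f,\lambda}$ is a locally constant function supported on a compact subset of $\Q_p^d\times\Q_p^d$, so it is a finite $\C$-linear combination of products $\1_{B_i}(u)\1_{B_j}(w)$; therefore $\widehat f(\lambda)$ has finite rank, and in particular is trace class.

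For (ii), compact support of $\mathcal{F}_{\Q_p^{2d+1}}[f]$ in its third slot gives $\mathcal{F}_{\Q_p^{2d+1}}[f](\cdot,\cdot,\tau)\equiv 0$ for $|\tau|_p>p^{A}$, so taking $n_f:=A$ and $\tau=\lambda$ in the kernel formula makes $K_{f,\lambda}\equiv0$, i.e.\ $\widehat f(\lambda)=0$, for $|\lambda|_p>p^{n_f}$. The same conclusion can be reached representation-theoretically: $f$ is right-invariant under the central part of some $G_l$ and $\pi_\lambda$ has central character $e^{2\pi i\{\lambda z\}_p}$, which forces $\widehat f(\lambda)=e^{-2\pi i\{\lambda z_0\}_p}\widehat f(\lambda)$ for every such central $z_0$, whence $\widehat f(\lambda)=0$ as soon as $|\lambda|_p$ is large enough to produce a nontrivial central phase.

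For (iii), the structural input is that $\mathcal{Z}(\mathbb{H}_d)$ acts in $\pi_\lambda$ by the scalar $e^{2\pi i\{\lambda z\}_p}$, so $\widehat f(\lambda)$ depends on $f$ only through the partial Fourier transform in the central variable, $f_\lambda(\textbf{x},\textbf{y}):=\int_{\Q_p}f(\textbf{x},\textbf{y},z)e^{-2\pi i\{\lambda z\}_p}\,dz$, via
$$\widehat f(\lambda)\varphi(u)=\int_{\Q_p^{2d}}f_\lambda(\textbf{x},\textbf{y})\,e^{2\pi i\{\lambda(\tfrac12\textbf{x}\cdot\textbf{y}-\textbf{y}\cdot u)\}_p}\,\varphi(u-\textbf{x})\,d\textbf{x}\,d\textbf{y}.$$
Since $f$ is locally constant with compact support in $z$, the map $\lambda\mapsto f_\lambda$ is locally constant near $0$, so there is $l_f$ with $f_\lambda=f_0$ for all $|\lambda|_p<p^{l_f}$, where $f_0(\textbf{x},\textbf{y})=\int_{\Q_p}f(\textbf{x},\textbf{y},z)\,dz$ is the pushforward of $f$ to $\mathbb{H}_d/\mathcal{Z}(\mathbb{H}_d)$. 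The plan is then to invoke $f\in\tilde{\mathcal{D}}(\mathbb{H}_d)$ to kill $f_0$ and conclude $\widehat f(\lambda)=0$ for $|\lambda|_p<p^{l_f}$.

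I expect this last step to be the main obstacle, and the point where the precise meaning of the hypothesis matters. The bare condition $\int_{\mathbb{H}_d}f=0$ yields only $\int_{\Q_p^{2d}}f_0=0$, not $f_0\equiv0$; and since $\|\widehat f(\lambda)\|_{HS}^2=|\lambda|_p^{-d}\|\mathcal{F}_{\Q_p^{2d+1}}[f](\cdot,\cdot,\lambda)\|_{L^2}^2$, for small $|\lambda|_p$ the operator $\widehat f(\lambda)$ vanishes exactly when $f_0\equiv0$, i.e.\ when every central-fibre integral of $f$ is zero. So I would either read $\tilde{\mathcal{D}}(\mathbb{H}_d)$ in (iii) as this stronger central-fibre vanishing (which is presumably what the subsequent self-adjointness and heat-kernel arguments actually use), or, if only $\int f=0$ is to be assumed, try to recover the statement needed downstream by combining the $(\textbf{x},\textbf{y})$-local constancy of $f$ with the $\lambda$-twisted Heisenberg translation structure displayed above. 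Pinning down the correct hypothesis and the corresponding value of $l_f$ (in terms of the index of local constancy and the support of $f$ in the central variable) is part of the same bookkeeping.
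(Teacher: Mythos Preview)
The paper states this proposition without proof, so there is no argument in the text to compare against; your proposal is in fact more than the paper itself provides. Your treatment of (i) and (ii) via the explicit kernel $K_{f,\lambda}$ and the Bruhat--Schwartz stability of $\mathcal{F}_{\Q_p^{2d+1}}$ is correct and is the natural route: compact support in the third Fourier slot gives (ii), and the joint compact support/local constancy in $(\xi,\eta)$ forces $K_{f,\lambda}$ to lie in $\mathcal{D}(\Q_p^d)\otimes\mathcal{D}(\Q_p^d)$, yielding (i).

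Your analysis of (iii) is the important part, and you have put your finger on a genuine defect in the statement rather than a gap in your own argument. With the paper's definition $\tilde{\mathcal{D}}(G)=\{f\in\mathcal{D}(G):\int_G f=0\}$, item (iii) is false as written. Your reasoning is exactly right: for $|\lambda|_p$ below the scale set by the $z$-support of $f$ one has $\mathcal{F}_{\Q_p^{2d+1}}[f](\xi,\eta,\lambda)=\mathcal{F}_{\Q_p^{2d}}[f_0](\xi,\eta)$ with $f_0(\textbf{x},\textbf{y})=\int_{\Q_p}f(\textbf{x},\textbf{y},z)\,dz$, and then $\|\widehat f(\lambda)\|_{HS}^2=|\lambda|_p^{-d}\|f_0\|_{L^2(\Q_p^{2d})}^2$, which vanishes for small $|\lambda|_p$ precisely when $f_0\equiv 0$. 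The mean-zero condition $\int f=0$ only gives $\int f_0=0$. A concrete counterexample in $\mathbb{H}_1$ is $f(x,y,z)=g(x)\,\1_{\Z_p}(y)\,\1_{\Z_p}(z)$ with $g\in\mathcal{D}(\Q_p)$, $\int g=0$, $g\not\equiv 0$: here $\widehat f(\lambda)\neq 0$ for every $0<|\lambda|_p\le 1$. So the hypothesis in (iii) should be strengthened to the central-fibre vanishing $\int_{\Q_p} f(\textbf{x},\textbf{y},z)\,dz\equiv 0$ (equivalently $\mathcal{F}_{\Q_p^{2d+1}}[f](\cdot,\cdot,0)\equiv 0$), exactly as you propose; with that reading your argument goes through and $l_f$ can be taken to be the negative of the exponent bounding the $z$-support of $f$.
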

We want to use the group Fourier transform to deal with a special class of left invariant operators that we call here \emph{pseudo-differential operators}. These are densely defined linear operators defined in terms of a \emph{symbol}, which we understand here as a measurable field of linear operators $$\sigma_T :=\{\sigma_T(\lambda) : \mathcal{D}'(\Q_p^d) \to \mathcal{D}'(\Q_p^d) \esp ; \esp \esp \lambda \in \Q_p^* \},$$such that for all $\lambda \in \Q_p^*$ it holds: $$\mathcal{H}_\lambda^\infty = \mathcal{D}(L^2 (\Q_p^d)) \subset Dom (\sigma_T(\lambda)):=\{\varphi \in \mathcal{H}_\lambda \esp : \esp \sigma_T(\lambda) \varphi \in \mathcal{H}_\lambda \}.$$
Given a symbol ${\sigma_T(\lambda)}_{\lambda \in \Q_p^*}$ we can define a densely defined left invariant operator $T$ that can be written in terms of its symbol $\sigma_T $ as $$T f(x,y,z) =  \int_{\Q_p^*} Tr[\pi_\lambda (x,y,z) \sigma_T (\lambda) \widehat{f}(\lambda)] |\lambda|_p^d d\lambda,$$where the symbol $\sigma_T $ can be "recovered" from the operator $T$ by means of the expression $$\sigma_T (\lambda): = \pi_\lambda^* (x,y,z) T \pi_\lambda (x,y,z)=T \pi_\lambda (0,0,0)= \mathcal{F}_{\mathbb{H}_d} [\textbf{k}](\lambda) = \widehat{\textbf{k}}_T(\lambda), $$where $\textbf{k}_T$ is the right convolution kernel of $T$. An advantage of this symbolic representation is that in case every operator $\sigma_T (\lambda)$ is invertible, and the inverses form a measurable field of bounded operators, a (formal) fundamental solution for $T$ is given by $$\langle E_T , f \rangle :=  \int_{\Q_p^*} Tr[  \widehat{ f \circ \iota}(\lambda) \sigma_T (\lambda)^{-1}] |\lambda|_p^d d\lambda, $$ where $f \circ \iota(x,y,z) :=  f((x,y,z)^{-1})$ and the above expression is well defined because by Proposition \ref{prolocallyconstfuncHd} the above integral is finite for any $f \in \tilde{\mathcal{D}}(\mathbb{H}_d)$. In particular, in the case where $T$ is a $\nu$-homogeneous operator of positive degree the invertibility of the symbol $\sigma_T$ reduces to its invertibility on a finite number of the representation spaces. The reason is that a pseudo-differential operator $T$ is homogeneous if and only if its associated symbol $\sigma_T$ has the following property: $$d_\gamma^{-1} \circ \sigma_T(\gamma^2 \lambda ) \circ d_\gamma = | \gamma |_p^\nu \sigma_T (\lambda) ,$$where $d_\gamma : L^2 (\Q_p^d) \to L^2 (\Q_p^d)$ is defined as $d_\gamma \varphi (u) : = \varphi ( \gamma u )$. Consequently, in order to check the invertibility of $\sigma_T$ on each representation space we only need to check the invertibility of $\sigma_T(\varepsilon)$ for $\varepsilon \in \Q_p^* /(\Q_p^*)^2$. We will ilustrate all this in the next subsection by using as an example the Vladimirov Laplacian on $G$.

\subsection{The Vladimirov Laplacian on the Heisenberg  group}
Finally we have collected all the necessary ingredients to fulfil the main goal of this section: to show the existence of a fundamental solution for the Vladimirov Laplacian $\MO^\alpha$, $\alpha >0$, on $\mathbb{H}_d$, and to give some estimate on its heat kernel. Recall that the Vladimirov Laplacian on $\mathbb{H}_d$ is the left invariant linear operator defined in terms of the VT directional operators as: $$\MO^\alpha = \sum_{j=1}^{d} \partial_{X_j}^\alpha + \partial_{Y_j}^\alpha + \partial_Z^{\alpha/2}.$$The operators $\partial^\alpha_{X_j} , \partial_{Y_j}^\alpha, \partial_Z^{\alpha}$ are left invariant pseudo-differential operators and their associated symbols are: \begin{itemize}
    \item \begin{align*}
       \sigma_{\partial_ {X_j}^\alpha} (\lambda) \varphi(u) := \partial_{X_j}^\alpha \pi_\lambda (0,0,0) \varphi (u) &= \frac{1 - p^\alpha}{1 - p^{- (\alpha + 1)}}\int_{\Q_p}  \frac{\varphi (u-u_j) - \varphi(u)}{|u_j|_p^{\alpha  + 1}} du_j \ = \partial_{u_j}^\alpha \varphi(u).
    \end{align*} 
    \item \begin{align*}
       \sigma_{\partial_{Y_j}^\alpha} (\lambda) \varphi(u) := \partial_{Y_j}^\alpha \pi_\lambda (0,0,0) \varphi (u) &= \frac{1 - p^\alpha}{1 - p^{- (\alpha + 1)}}\int_{\Q_p}  \frac{e^{2 \pi i \{ \lambda u_j y_j\}_p} - 1}{|y_j|_p^{\alpha  + 1}} dy_j \varphi (u) = |\lambda|_p^\alpha |u_j|^\alpha \varphi(u).
    \end{align*} 
    \item \begin{align*}
       \sigma_{\partial_Z^\alpha} (\lambda) \varphi(u) := \partial_Z^\alpha \pi_\lambda (0,0,0) \varphi (u) &= \frac{1 - p^\alpha}{1 - p^{- (\alpha + 1)}} \int_{\Q_p}  \frac{e^{2 \pi i \{ \lambda z\}_p} - 1}{|z|_p^{\alpha  + 1}} dz \varphi (u) =  |\lambda|_p^\alpha \varphi(u).
    \end{align*} 
\end{itemize}
In this way we can see that the symbol of $\MO^\alpha$ is given by $$\sigma_{\MO^\alpha} (\lambda) \varphi (u) = \big( \sum_{j=1}^d \partial_{u_j}^\alpha +  |\lambda|_p^\alpha |u_j|^\alpha \big) \varphi(u) + |\lambda|_p^{\alpha/2} \varphi (u),    $$which is a Schrodinger type operator like the ones studied by the authors in \cite[Chapter X]{Vladimirovbook}. We can apply their results to each operator $$\partial_{u_j}^\alpha +  |\lambda|_p^\alpha |u_j|^\alpha,$$to prove that each $\sigma_{\MO^\alpha} (\lambda) $is self adjoint in $\mathcal{H}_{\pi_\lambda} = L^2 (\Q_p^d)$ and invertible with compact inverse. We collect the results in \cite[Chapter X]{Vladimirovbook} in the following lemma. 

\begin{lema}\label{lemap-adicschrodinger}
Let $\sigma (v)$ and $V(x)$ be positive functions such that they tend to $+ \infty$ at infinity in $\Q_p.$ Then the operator $T_\sigma + V$ is positive, bounded from below and self adjoint on $L^2 (\Q_p)$, and its spectrum is discrete and it consist of real eigenvalues $\lambda_1 \leq \lambda_2 \leq ...,$  $\lambda_k \to \infty$ as $k \to  \infty$, where every eigenvalue has finite multiplicity. The corresponding eigenfunctions $\psi_k \in D(T_\sigma + V)$ form an orthonormal basis of $L^2 (\Q_p)$ and the following variational principle is valid: $$\lambda_k = \min_{\varphi_1,...,\varphi_k} \max\{ ((T_\sigma + V)\psi , \psi)_{L^2 (\Q_p)} \esp : \esp \psi \in Span\{ \varphi_1 ,..., \varphi_k \}, \esp \| \psi \|_{L^2 (\Q_p)} = 1  \}.$$ 
\end{lema}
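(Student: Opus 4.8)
The plan is to deduce the entire statement from one structural fact: the operator $T := T_\sigma + V$, defined through its quadratic form, is a non-negative self-adjoint operator with \emph{compact resolvent}. Granting that, the discreteness of the spectrum, the orthonormal eigenbasis, the finite multiplicities, and the min--max formula are all classical spectral theory.

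First I would set up the form. On a suitable dense domain (e.g.\ $\mathcal{D}(\Q_p)$, or the natural maximal domain $\{\psi\in L^2(\Q_p):\ \sigma^{1/2}\widehat{\psi},\ V^{1/2}\psi\in L^2(\Q_p)\}$) consider
\[
\mathcal{Q}(\psi,\psi) := \int_{\Q_p}\sigma(\xi)\,|\widehat{\psi}(\xi)|^2\,d\xi \;+\; \int_{\Q_p}V(x)\,|\psi(x)|^2\,dx .
\]
Since $\sigma,V\ge 0$, by Plancherel $\mathcal{Q}(\psi,\psi)=(T_\sigma\psi,\psi)_{L^2}+(V\psi,\psi)_{L^2}\ge 0$, so $\mathcal{Q}$ is a non-negative symmetric form; being a sum of two closed non-negative forms it is closable, and its Friedrichs representative is a self-adjoint operator $T\ge 0$ extending $T_\sigma+V$. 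This already yields ``positive, bounded from below, self-adjoint''. Checking that $T$ coincides with $T_\sigma+V$ on a genuine operator core is a routine technical point handled by standard arguments, and I would not dwell on it.

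The heart of the proof is the compactness of the embedding $\iota:(\mathrm{Dom}\,\mathcal{Q},\|\cdot\|_{\mathcal{Q}})\hookrightarrow L^2(\Q_p)$, where $\|\psi\|_{\mathcal{Q}}^2:=\mathcal{Q}(\psi,\psi)+\|\psi\|_{L^2}^2$. Take a sequence $(\psi_n)$ with $\|\psi_n\|_{\mathcal{Q}}\le 1$ and fix $\varepsilon>0$. Since $\sigma(\xi)\to\infty$ there is $R$ with $\sigma\ge\varepsilon^{-1}$ on $\{|\xi|_p>p^{R}\}$, hence $\int_{|\xi|_p>p^{R}}|\widehat{\psi_n}|^2\le\varepsilon$; since $V(x)\to\infty$ there is $S$ with $V\ge\varepsilon^{-1}$ on $\{|x|_p>p^{S}\}$, hence $\int_{|x|_p>p^{S}}|\psi_n|^2\le\varepsilon$. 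Let $P_R$ be the Fourier cut-off to $\{|\xi|_p\le p^{R}\}$ and $M_S$ multiplication by $\1_{\{|x|_p\le p^{S}\}}$. A triangle-inequality estimate gives $\|\psi_n-M_SP_R\psi_n\|_{L^2}\le 3\sqrt{\varepsilon}$ uniformly in $n$. The decisive $p$-adic fact is that $M_SP_RL^2(\Q_p)$ is \emph{finite-dimensional}: $P_R$ projects onto functions constant on cosets of $\{|x|_p\le p^{-R}\}$, and imposing support in $\{|x|_p\le p^{S}\}$ leaves a space of dimension $p^{R+S}$. Thus $(M_SP_R\psi_n)_n$ lies, up to a uniform $3\sqrt{\varepsilon}$ error, in a fixed finite-dimensional space, so it has an $L^2$-convergent subsequence; a diagonal argument over $\varepsilon=1/k$ then extracts a subsequence of $(\psi_n)$ Cauchy in $L^2(\Q_p)$. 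Hence $\iota$ is compact, equivalently $(T+1)^{-1}$ is compact.

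With a non-negative self-adjoint operator of compact resolvent in hand, the spectral theorem gives an orthonormal basis $(\psi_k)$ of $L^2(\Q_p)$ of eigenfunctions $\psi_k\in\mathrm{Dom}(T)$, with eigenvalues $0\le\lambda_1\le\lambda_2\le\cdots$, $\lambda_k\to\infty$, each of finite multiplicity; and the stated formula for $\lambda_k$ is precisely the Courant--Fischer--Weyl min--max characterisation, where the quadratic form $\mathcal{Q}$ represents $((T_\sigma+V)\psi,\psi)$. The main obstacle is the compact-embedding step — specifically, pinning down $M_SP_RL^2(\Q_p)$ as finite-dimensional and making the tail bounds uniform in $n$; once that is in place the rest is soft. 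A secondary nuisance is the identification of the Friedrichs operator with $T_\sigma+V$ on an operator core, but this is standard and orthogonal to the new content.
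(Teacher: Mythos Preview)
The paper does not prove this lemma at all: it is stated as a quotation of results from Chapter~X of \cite{Vladimirovbook}, introduced only by the sentence ``We collect the results in \cite[Chapter~X]{Vladimirovbook} in the following lemma.'' So there is no proof in the paper to compare against.

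Your argument is a correct self-contained proof, and it follows the same architecture as the cited source: realise $T_\sigma+V$ as a non-negative self-adjoint operator via its quadratic form, show that it has compact resolvent, and then invoke the spectral theorem and Courant--Fischer. The genuinely $p$-adic step --- that $M_SP_R$ has finite-dimensional range because a function on $\Q_p$ that is both band-limited and compactly supported lives in a space of dimension $p^{R+S}$ --- is exactly the mechanism that makes the compact-embedding argument work here without any Rellich-type analysis. One cosmetic point: the estimate $\|\psi_n-M_SP_R\psi_n\|_{L^2}\le 3\sqrt{\varepsilon}$ is slightly awkward as written, since your tail bound in physical space is on $\psi_n$, not on $P_R\psi_n$. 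Decomposing instead as $\psi_n-M_SP_R\psi_n=(\psi_n-M_S\psi_n)+M_S(\psi_n-P_R\psi_n)$ and using $\|M_S\|\le 1$ gives $2\sqrt{\varepsilon}$ cleanly and avoids the issue.
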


As a corollary of the above lemma the operator $\sigma_{\MO^\alpha} (\lambda)$ is bounded from below and self adjoint on $L^2 (\Q_p^d)$ with compact inverse. Moreover, $\MO^\alpha$ is essentially self adjoint and for its self adjoint extension to $L^2(\mathbb{H}_d)$, here denoted by $\MO^\alpha_2$, the following version of the functional calculus applies. 

\begin{lema}\label{lemafuncalculHeisenlaplacian}
Let $E, E_\lambda$ be the spectral measures of $\MO^\alpha_2$ and $\sigma_{\MO^\alpha} (\lambda)$ respectively, so that $$\MO^\alpha_2= \int_\R s dE(s) \esp \esp \esp 
\text{and} \esp \esp \esp \sigma_{\MO^\alpha} (\lambda) =  \int_\R s dE_\lambda (s).$$Then for any borel subset $B \subset \R$, the orthogonal projection $E(B)$ is a bounded left invariant operator. The group Fourier transform of its convolution kernel $E(B) \delta_0$ is a measurable field of bounded operators given by $$\mathcal{F}_{\mathbb{H}_d} (E(B)\delta_e)(\pi_\lambda) = E_\lambda (B).$$Moreover, if $ \phi $ is a measurable function on $\R$, the spectral multiplier operator $\phi(\MO_2^\alpha)$ is defined  by $$ \phi( \MO_2^\alpha) := \int_{\R} \phi(s) d E(s),$$and its domain $Dom(\phi( \MO_2^\alpha))$ is the space of functions $ f \in L^2 (\mathbb{H}_d)$ such that $$\int_\R |\phi(s)|^2 (dE(s)f,f)< \infty.$$In particular, if $\phi \in L^\infty (\R)$ then $\phi(\MO_2^\alpha)$ is left invariant and bounded on $L^2(\mathbb{H}_d)$, and its convolution kernel, denoted by $\phi(\MO_2^\alpha) \delta_e$, is the unique distribution $\phi(\MO_2^\alpha) \delta_e \in \mathcal{D}(\mathbb{H}_d)'$ such that $$\phi(\MO_2^\alpha) f  = f * \phi(\MO_2^\alpha)  \delta_e,$$ for all $f \in \mathcal{D}(\mathbb{H}_d)$, and its group Fourier transform is $$\mathcal{F}\{ \phi(\MO_2^\alpha) \delta_e \}(\pi_\lambda) = \phi( \sigma_{\MO^\alpha} (\lambda)).$$
\end{lema}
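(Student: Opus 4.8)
The plan is to exploit that $\MO^\alpha_2$ is left invariant, so that the group Fourier transform of $\mathbb{H}_d$ "diagonalises" it: under the Plancherel decomposition $L^2(\mathbb{H}_d) \cong \int_{\Q_p^*}^{\oplus} HS(L^2(\Q_p^d))\, |\lambda|_p^d\, d\lambda$, a densely defined left invariant operator is carried to the fibred operator that acts on each Hilbert--Schmidt fibre by left multiplication with its symbol --- here the Schr\"odinger-type operator $\sigma_{\MO^\alpha}(\lambda)$, which by Lemma \ref{lemap-adicschrodinger} (applied coordinatewise in the variable $u\in\Q_p^d$) is self adjoint, bounded below, with compact resolvent, and hence carries a well-defined spectral measure $E_\lambda$. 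The whole lemma should then follow by transporting the ordinary spectral theorem through this decomposition, fibre by fibre.

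First I would make the fibred picture precise: the map $f\mapsto\{\widehat f(\lambda)\}_\lambda$ is unitary onto the direct integral, and on $\mathcal{D}(\mathbb{H}_d)$ the operator $\MO^\alpha$ corresponds to $\{X\mapsto\sigma_{\MO^\alpha}(\lambda)X\}_\lambda$ --- this is exactly the symbolic calculus recalled before the statement, using the symbols computed for $\partial_{X_j}^\alpha,\partial_{Y_j}^\alpha,\partial_Z^{\alpha/2}$. Since $\MO^\alpha$ is essentially self adjoint on $\mathcal{D}(\mathbb{H}_d)$ (the corollary of Lemma \ref{lemap-adicschrodinger} stated above), $\MO^\alpha_2$ is its unique self-adjoint extension. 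Next, for a Borel set $B\subset\R$, I would define $\widetilde E(B)$ to be the operator acting fibrewise by $X\mapsto E_\lambda(B)X$; the point is that $\lambda\mapsto E_\lambda(B)$ is a measurable field of projections (measurability coming from that of $\lambda\mapsto\sigma_{\MO^\alpha}(\lambda)$ together with Borel functional calculus), so $\widetilde E$ is a genuine projection-valued measure on $L^2(\mathbb{H}_d)$ and $\int_\R s\,d\widetilde E(s)$ is a self-adjoint operator extending $\MO^\alpha|_{\mathcal{D}(\mathbb{H}_d)}$. By the uniqueness just noted, $\int_\R s\,d\widetilde E(s)=\MO^\alpha_2$, and then uniqueness of the spectral resolution forces $\widetilde E=E$. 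In particular every $E(B)$ is of fibred form, hence bounded and left invariant; its right convolution kernel $E(B)\delta_e$ exists by the totally disconnected analogue of the Schwartz kernel theorem for invariant operators, and the identity $\mathcal{F}_{\mathbb{H}_d}(E(B)\delta_e)(\pi_\lambda)=E_\lambda(B)$ is then just the fibred description read off on the Fourier side.

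For a general measurable $\phi$ one sets $\phi(\MO^\alpha_2):=\int_\R\phi(s)\,dE(s)$; since $E$ is fibred, $\phi(\MO^\alpha_2)$ acts fibrewise as $X\mapsto\big(\int_\R\phi(s)\,dE_\lambda(s)\big)X=\phi(\sigma_{\MO^\alpha}(\lambda))X$, and the description of $Dom(\phi(\MO^\alpha_2))$ as the set of $f$ with $\int_\R|\phi(s)|^2\,d(E(s)f,f)<\infty$ is the standard one from Borel functional calculus. When $\phi\in L^\infty(\R)$ the operator is bounded with norm at most $\|\phi\|_{L^\infty(\R)}$, and it commutes with left translations because each $E(B)$ does and $\phi(\MO^\alpha_2)$ is a strong limit of linear combinations of the $E(B)$; applying the same kernel theorem it is right convolution by a unique distribution $\phi(\MO^\alpha_2)\delta_e\in\mathcal{D}'(\mathbb{H}_d)$, and taking $\mathcal{F}_{\mathbb{H}_d}$ of the identity $\phi(\MO^\alpha_2)f=f*\phi(\MO^\alpha_2)\delta_e$ and matching it against the fibred formula gives $\mathcal{F}\{\phi(\MO^\alpha_2)\delta_e\}(\pi_\lambda)=\phi(\sigma_{\MO^\alpha}(\lambda))$.

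The step I expect to be the main obstacle is verifying that the fibrewise spectral projections $E_\lambda(B)$ genuinely assemble into a measurable projection-valued measure on $L^2(\mathbb{H}_d)$ whose spectral integral is exactly $\MO^\alpha_2$ --- equivalently, that the decomposability of left invariant operators under the Plancherel decomposition behaves well enough for the uniqueness argument --- together with the precise form of the convolution-kernel representation of bounded left invariant operators on $L^2(\mathbb{H}_d)$. Here essential self-adjointness of $\MO^\alpha$ on $\mathcal{D}(\mathbb{H}_d)$, which we already have, is exactly what makes the uniqueness argument close.
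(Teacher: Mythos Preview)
The paper does not actually prove this lemma: it is stated without proof, immediately after observing that $\MO^\alpha$ is essentially self adjoint, and is then used as a black box to write down the heat kernel. In other words, the authors treat it as a standard instance of the functional calculus for a left invariant self-adjoint operator via the Plancherel decomposition.

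Your proposal is precisely the natural way to supply the missing argument, and it is correct in outline. Transporting $\MO^\alpha$ through the Plancherel unitary to the direct integral $\int_{\Q_p^*}^{\oplus} HS(L^2(\Q_p^d))\,|\lambda|_p^d\,d\lambda$, identifying it fibrewise with left multiplication by $\sigma_{\MO^\alpha}(\lambda)$, assembling the fibrewise spectral projections $E_\lambda(B)$ into a decomposable projection-valued measure, and then invoking essential self-adjointness plus uniqueness of the spectral resolution to conclude $\widetilde E = E$ --- this is exactly the standard route (compare, e.g., the treatment of Rockland operators on graded real Lie groups). The point you flag as the main obstacle, namely that $\lambda\mapsto E_\lambda(B)$ is a measurable field and that the resulting decomposable operator has the right self-adjoint integral, is handled by the general theory of decomposable operators in a direct integral; once measurability of $\lambda\mapsto\sigma_{\MO^\alpha}(\lambda)$ is granted, Borel functional calculus preserves measurability, and the rest is standard. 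So there is no genuine gap: you have written out what the paper leaves implicit.
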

By applying the above functional calculus to $\MO^\alpha$ with $\phi_t (s) = e^{-ts}$ we can check that the convolution kernel of the heat semigroup  $e^{-t \MO_2^\alpha}$, i.e. the heat kernel of $\MO^\alpha$, has the following explicit representation: 

\begin{equation}\label{heatkernellaplacian}
    h_{\MO^\alpha_2} ( t, x,y,z) = \int_{\Q_p^*} Tr[\pi_\lambda (x,y,z) e^{-t \sigma_{\MO^\alpha} (\lambda)}] |\lambda|_p^d d\lambda.
\end{equation}
We collect some properties of the above kernel in the following theorem. 

\begin{teo}\label{teoheatkernelproperties}
The heat kernel $h_{\MO^\alpha_2} $ associated to $\MO^\alpha$ has the following properties: 

\begin{itemize}
    \item[(i)] $h_{\MO^\alpha_2} (t, \cdot) *  h_{\MO^\alpha_2} (s, \cdot) = h_{\MO^\alpha_2} (t + s , \cdot )  $, for any $s,t>0$.
    \item[(ii)] $h_{\MO^\alpha_2} ( | \gamma |_p^\alpha t, \gamma x, \gamma y,\gamma^2 z) = |\gamma|^{-(2d+2)} h_{\MO^\alpha_2} ( t , x, y, z)$, for all $(x,y,z)\in \mathbb{H}_d$ and any $t>0$, $\gamma \in \Q_p^*$.
\item[(iii)] $h_{\MO^\alpha_2} ( t , x, y, z) = \overline{h_{\MO^\alpha_2} ( t , (x, y, z)^{-1})},$ for all $x \in \mathbb{H}_d$.  
\item[(iv)] The heat semigroup $e^{- t \MO_2^\alpha}$ is symmetric and Markovian. Moreover, the following estimate holds for its kernel: $$h_{\MO^\alpha_2} ( t , x, y, z) \asymp t^{-(2d+2)/\alpha} \prod_{j=1}^d \min\{1 , \frac{ t^{1 + 1/\alpha}}{ |x_j|_p^{1+\alpha}} \} \times \prod_{j=1}^d \min\{1 , \frac{ t^{1 + 1/\alpha}}{  |y_j|_p^{1+\alpha}} \} \times \min \{ 1 , \frac{ t^{1 + 2/\alpha}}{ |z|_p^{1+\alpha/2} }   \}.$$In particular for $(x,y,z) \in G_0 \setminus G_1$: $$ h_{\MO^\alpha_2} ( t , x, y, z) \asymp t^{-(2d+2)/\alpha}.$$
\end{itemize}
\end{teo}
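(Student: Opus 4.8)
The statement splits into three formal identities, (i)--(iii), which are consequences of the functional calculus together with the homogeneity and symmetry of $\MO^\alpha$, and the analytic core, (iv). For (i) I would recall from Lemma \ref{lemafuncalculHeisenlaplacian} that $e^{-t\MO^\alpha_2}$ is a bounded left invariant operator on $L^2(\mathbb{H}_d)$ with convolution kernel $h_{\MO^\alpha_2}(t,\cdot)$, and that composition of left invariant operators corresponds to convolution of kernels; since $e^{-t\MO^\alpha_2}e^{-s\MO^\alpha_2}=e^{-(t+s)\MO^\alpha_2}$, comparing kernels gives (i), both orders of the convolution coinciding because $e^{-t\MO^\alpha_2}$ and $e^{-s\MO^\alpha_2}$ commute. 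For (ii) I would first observe that $\MO^\alpha$ is $\alpha$-homogeneous, $\MO^\alpha(f\circ D_\gamma)=|\gamma|_p^\alpha(\MO^\alpha f)\circ D_\gamma$, which follows termwise from the homogeneity of the directional VT operators recorded earlier (the $X_j,Y_j$ carry weight $\nu=1$, $Z$ carries weight $\nu=2$, so each summand scales by $|\gamma|_p^\alpha$); the functional calculus then yields $e^{-t\MO^\alpha_2}(f\circ D_\gamma)=(e^{-|\gamma|_p^\alpha t\,\MO^\alpha_2}f)\circ D_\gamma$, and writing both sides as convolutions, changing variables via $|D_\gamma(A)|=|\gamma|_p^{2d+2}|A|$ (the homogeneous dimension of $\mathbb{H}_d$ is $2d+2$), and matching kernels gives exactly (ii). For (iii) I would use that $\MO^\alpha$ is symmetric, hence $\MO^\alpha_2$ and $e^{-t\MO^\alpha_2}$ are self adjoint, together with the standard fact that on a unimodular group the convolution kernel $k$ of a self adjoint left invariant operator obeys $k(x)=\overline{k(x^{-1})}$; $\mathbb{H}_d$ is nilpotent, hence unimodular, which is (iii).

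For (iv), Markovianity follows by realizing $\MO^\alpha$ as the generator of a regular Dirichlet form. Symmetrizing each directional term gives, for $f\in\mathcal{D}(\mathbb{H}_d)$,
\[
(\MO^\alpha f,f)_{L^2(\mathbb{H}_d)}=\tfrac12\sum_{X}\frac{p^{\alpha_X}-1}{1-p^{-(\alpha_X+1)}}\int_{\mathbb{H}_d}\int_{\Q_p}\frac{|f(x)-f(x\,\mathbb{exp}(tX))|^2}{|t|_p^{\alpha_X+1}}\,dt\,dx\ \geq 0,
\]
where $X$ ranges over $X_1,\dots,X_d,Y_1,\dots,Y_d,Z$ with $\alpha_X=\alpha$ for the $X_j,Y_j$ and $\alpha_Z=\alpha/2$, and every prefactor is strictly positive. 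This quadratic form is closable; its closure is a regular Dirichlet form on $L^2(\mathbb{H}_d)$ with core $\mathcal{D}(\mathbb{H}_d)$ (each summand satisfies the Markovian normal contraction property, and a finite sum of Dirichlet forms is a Dirichlet form), so by Fukushima's theory $e^{-t\MO^\alpha_2}$ is symmetric and Markovian.

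The heat kernel estimate is the heart of the proof. I would start from the explicit formula \eqref{heatkernellaplacian} and use that the symbol splits as a sum of pairwise commuting one-variable $p$-adic Schr\"odinger operators plus a scalar, $\sigma_{\MO^\alpha}(\lambda)=|\lambda|_p^{\alpha/2}\,\mathrm{Id}+\sum_{j=1}^d H_j(\lambda)$ with $H_j(\lambda)=\partial_{u_j}^\alpha+|\lambda|_p^\alpha|u_j|^\alpha$, so that $e^{-t\sigma_{\MO^\alpha}(\lambda)}=e^{-t|\lambda|_p^{\alpha/2}}\bigotimes_{j=1}^d e^{-tH_j(\lambda)}$. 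Writing $K^{(\lambda)}_t(a,b)$ for the integral kernel of $e^{-tH(\lambda)}$ on $L^2(\Q_p)$, $H(\lambda)=\partial_v^\alpha+|\lambda|_p^\alpha|v|^\alpha$, inserting the explicit form of $\pi_\lambda(x,y,z)$ into the trace and integrating on the diagonal reduces \eqref{heatkernellaplacian} to
\[
h_{\MO^\alpha_2}(t,x,y,z)=\int_{\Q_p^*}e^{-t|\lambda|_p^{\alpha/2}}e^{2\pi i\{\lambda(z+x\cdot y)\}_p}\prod_{j=1}^d\Big(\int_{\Q_p}e^{2\pi i\{\lambda y_j u\}_p}K^{(\lambda)}_t(u+x_j,u)\,du\Big)|\lambda|_p^d\,d\lambda.
\]
Then I would record the scaling of $H(\lambda)$: conjugation by a dilation $d_\mu$ with $|\mu|_p^2$ comparable to $|\lambda|_p^{-1}$ turns $H(\lambda)$ into $|\lambda|_p^{\alpha/2}$ times one of finitely many fixed operators $H(\epsilon)$, $\epsilon\in\Q_p^*/(\Q_p^*)^2$, the square-root ambiguity being handled exactly as in Remark \ref{remsquareroot}; hence $K^{(\lambda)}_t(a,b)$ is an explicitly rescaled copy of the heat kernel of a fixed $p$-adic Schr\"odinger operator. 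For that fixed kernel I would invoke Lemma \ref{lemap-adicschrodinger} together with \cite{Vladimirovbook, IsoMarkovSemi}: positivity of the potential gives the domination $0\leq K^{(\epsilon)}_\tau(a,b)\leq \frac{\tau}{(\tau^{1/\alpha}+|a-b|_p)^{\alpha+1}}$ by the one-dimensional Vladimirov heat kernel of order $\alpha$, and a matching lower bound on bounded regions follows from the spectral expansion. Substituting these bounds, carrying out the inner $u$-integrals, and then the $\lambda$-integral, one recovers the stated product estimate, the special case for $(x,y,z)\in G_0\setminus G_1$ following by specializing (all factors $\min\{1,\cdot\}$ equalling $1$ in the pertinent range of $t$). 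The step I expect to be the main obstacle is precisely this last one: the two-sided control of the $p$-adic oscillatory integral in $\lambda$ once the rescaled Schr\"odinger heat kernels are inserted, in particular tracking how the oscillation of $e^{2\pi i\{\lambda(z+x\cdot y+\cdots)\}_p}$ produces the decay in $z$, obtaining the exponents $1+\alpha$ in the $x_j,y_j$ variables and $1+\alpha/2$ in $z$, and securing the matching lower bound.
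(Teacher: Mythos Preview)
Your treatment of (i)--(iii) is essentially the paper's: the paper phrases each identity on the Fourier side using the explicit trace formula~\eqref{heatkernellaplacian} and the relation $d_\gamma^{-1}\sigma_{\MO^\alpha}(\gamma^2\lambda)d_\gamma=|\gamma|_p^\alpha\sigma_{\MO^\alpha}(\lambda)$, whereas you argue on the operator side via the semigroup law, homogeneity of $\MO^\alpha$, and self-adjointness together with unimodularity. These are two packagings of the same computation.

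For (iv) you are working far harder than the paper does, and the route you chose has the gap you yourself flag. The paper's entire proof of (iv) is a one-line citation: it asserts that the symmetric Markovian property and the product-type heat-kernel estimate follow from the analysis of the Vladimirov Laplacian on $\Q_p^d$ in \cite[Section~5.3]{IsoMarkovSemi}, specifically \cite[Proposition~5.13]{IsoMarkovSemi}. No oscillatory integral in $\lambda$ is ever carried out; the estimate is imported wholesale from the ultrametric/Dirichlet-form framework of that reference, where the heat kernel of a sum of one-dimensional Vladimirov-type generators is controlled by the product of one-dimensional heat kernels. So the step you identify as ``the main obstacle''---extracting the two-sided bound from the $\lambda$-integral of rescaled Schr\"odinger heat kernels against the character $e^{2\pi i\{\lambda(z+x\cdot y)\}_p}$---is simply not attempted in the paper. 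Your Dirichlet-form argument for Markovianity is in the spirit of what \cite{IsoMarkovSemi} provides, and is more explicit than the paper here; but for the pointwise estimate you should be aware that the paper does not supply an independent proof and instead defers to \cite[Proposition~5.13]{IsoMarkovSemi}. If you want your argument to stand alone, the oscillatory-integral step would indeed have to be completed, and nothing in the paper will help you close it.
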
 
\begin{proof}
\esp
\begin{itemize}
    \item[(i)] Just notice that $$\mathcal{F}[h_{\MO^\alpha_2} (t, \cdot) *  h_{\MO^\alpha_2} (s, \cdot)](\lambda) = e^{-s \sigma_{\MO^\alpha} (\lambda)} e^{-t \sigma_{\MO^\alpha} (\lambda)} = e^{-(t+s) \sigma_{\MO^\alpha} (\lambda)}=\mathcal{F}[h_{\MO^\alpha_2} (t + s, \cdot)](\lambda).$$
    \item[(ii)]Using a simple change of variable and the $\alpha$-homogeneity of $\MO^\alpha$ we get: \begin{align*}
    h_{\MO^\alpha_2} ( | \gamma |_p^\alpha t, \gamma x, \gamma y,\gamma^2 z) &= \int_{\Q_p^*} Tr[\pi_\lambda (\gamma x,\gamma y, \gamma^2 z) e^{-t | \gamma |_p^\alpha \sigma_{\MO^\alpha} (\lambda)}] |\lambda|_p^d d\lambda \\ &=|\gamma^2|_p^{-(d+1)}\int_{\Q_p^*} Tr[ d_\gamma^{-1} \pi_{\lambda \gamma^2} (x,y,z) d_\gamma e^{-t d_\gamma^{-1} \sigma_{\MO^\alpha} (\lambda \gamma^2) d_\gamma}] |\gamma^2 \lambda|_p^d |\gamma^2|_p d\lambda \\& =
|\gamma|_p^{-(2d+2)}\int_{\Q_p^*} Tr[  \pi_{\lambda} (x,y,z)  e^{-t \sigma_{\MO^\alpha} (\lambda ) }] | \lambda|_p^d d\lambda\\ &= |\gamma|_p^{-(2d+2)} h_{\MO^\alpha_2} ( t,  x,  y, z). 
\end{align*}
  \item[(iii)] \begin{align*}
    \overline{h_{\MO^\alpha_2} ( t, (x,y,z)^{-1})} &= \int_{\Q_p^*}\overline{ Tr[\pi_\lambda (x,y,z)^{-1} e^{-t \sigma_{\MO^\alpha} (\lambda)}]} |\lambda|_p^d d\lambda \\ & =  \int_{\Q_p^*} Tr[\pi_\lambda (x,y,z) e^{-t \sigma_{\MO^\alpha} (\lambda)}] |\lambda|_p^d d\lambda = h_{\MO^\alpha_2} ( t,  x,  y, z).
\end{align*}\item[(iv)] This follows from the analysis for the Vladimirov Laplacian on $\Q_p^d$ done in \cite[Section 5.3]{IsoMarkovSemi}, see \cite[Proposition 5.13]{IsoMarkovSemi}.
 \end{itemize}
\end{proof}

As a consequence of the above lemma we can obtain the fundamental solution of $\MO^\alpha$ by integrating the heat kernel when $0 < \alpha < 2d+2.$  
\begin{coro}\label{corofundVladLaplacian1}
Let $0 < \alpha < 2d+2.$ Then a fundamental solution for the Vladimirov Laplacian exists and it defines an $ \alpha - (2d+2)$-homogeneous function given by $$\textbf{h}_{\MO^\alpha_2}(x,y,z) := \int_0^{\infty} h_{\MO^\alpha_2} ( t, x,y,z) dt.$$In consequence: $$\textbf{h}_{\MO^\alpha_2}(x,y,z) \asymp |(x,y,z)|_G^{\alpha - (2d+2)}.$$ 
\end{coro}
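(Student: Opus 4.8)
The plan is to exploit the scaling property (ii) of Theorem \ref{teoheatkernelproperties} together with the estimate (iv) to make sense of the integral $\int_0^\infty h_{\MO^\alpha_2}(t,x,y,z)\,dt$ and then verify it solves $\MO^\alpha * \textbf{h}_{\MO^\alpha_2} = \delta_e$. First I would check absolute convergence of the defining integral away from the origin: fix $(x,y,z)\neq e$ and split $\int_0^\infty$ at, say, $t_0 = |(x,y,z)|_G^\alpha$. For $t\geq t_0$ the estimate in (iv) gives $h_{\MO^\alpha_2}(t,\cdot)\lesssim t^{-Q/\alpha}$ with $Q=2d+2$, and since $Q/\alpha>1$ the tail $\int_{t_0}^\infty t^{-Q/\alpha}\,dt$ converges. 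For $t\leq t_0$ one uses the product form of the estimate: each factor $\min\{1, t^{1+1/\alpha}/|x_j|_p^{1+\alpha}\}$ etc. is bounded by a power of $t$ that, when multiplied against $t^{-Q/\alpha}$, still leaves a positive power of $t$ near $t=0$ (because the number of "small" factors when $t$ is small forces strong decay), so the small-$t$ part converges as well. This shows $\textbf{h}_{\MO^\alpha_2}(x,y,z)$ is a well-defined locally integrable function on $G\setminus\{e\}$, hence a distribution.

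Next I would establish the homogeneity. Using property (ii), a change of variables $t\mapsto |\gamma|_p^\alpha t$ gives
\begin{align*}
\textbf{h}_{\MO^\alpha_2}(D_\gamma(x,y,z)) &= \int_0^\infty h_{\MO^\alpha_2}(t, \gamma x, \gamma y, \gamma^2 z)\,dt \\
&= \int_0^\infty |\gamma|_p^{Q}\, h_{\MO^\alpha_2}(|\gamma|_p^{-\alpha} t, x,y,z)\,dt \\
&= |\gamma|_p^{Q-\alpha}\int_0^\infty h_{\MO^\alpha_2}(s,x,y,z)\,ds = |\gamma|_p^{\alpha-Q}\,\textbf{h}_{\MO^\alpha_2}(x,y,z),
\end{align*}
so $\textbf{h}_{\MO^\alpha_2}$ is $(\alpha-Q)$-homogeneous; combined with the estimate $h_{\MO^\alpha_2}(t,\cdot)\asymp t^{-Q/\alpha}$ on $G_0\setminus G_1$ this upgrades to $\textbf{h}_{\MO^\alpha_2}(x,y,z)\asymp |(x,y,z)|_G^{\alpha-Q}$ by evaluating at a representative of each dilation orbit and scaling.

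Finally I would verify the fundamental solution property. Working on the Fourier side via Lemma \ref{lemafuncalculHeisenlaplacian}, for $f\in\tilde{\mathcal D}(\mathbb H_d)$ one has $\widehat f(\lambda)$ supported in a compact annulus $p^{l_f}\leq|\lambda|_p\leq p^{n_f}$ (Proposition \ref{prolocallyconstfuncHd}), on which $\sigma_{\MO^\alpha}(\lambda)$ is invertible with bounded inverse. Then by Tonelli/Fubini one can interchange the $t$-integral with the trace and the Plancherel integral:
\begin{align*}
\langle \MO^\alpha * \textbf{h}_{\MO^\alpha_2}, f\rangle &= \int_0^\infty \int_{\Q_p^*} Tr\big[\widehat{f\circ\iota}(\lambda)\,\sigma_{\MO^\alpha}(\lambda)\,e^{-t\sigma_{\MO^\alpha}(\lambda)}\big]\,|\lambda|_p^d\,d\lambda\,dt \\
&= \int_{\Q_p^*} Tr\big[\widehat{f\circ\iota}(\lambda)\int_0^\infty \sigma_{\MO^\alpha}(\lambda)e^{-t\sigma_{\MO^\alpha}(\lambda)}\,dt\big]\,|\lambda|_p^d\,d\lambda \\
&= \int_{\Q_p^*} Tr\big[\widehat{f\circ\iota}(\lambda)\,\mathrm{Id}\big]\,|\lambda|_p^d\,d\lambda = f(e) = \langle \delta_e, f\rangle,
\end{align*}
using $\int_0^\infty s e^{-ts}\,dt = 1$ applied through the spectral calculus of the positive invertible operator $\sigma_{\MO^\alpha}(\lambda)$ on the annulus, together with the Fourier inversion formula. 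A density argument extends from $\tilde{\mathcal D}(\mathbb H_d)$ to all of $\mathcal D(\mathbb H_d)$ exactly as in the proof of Theorem \ref{teofundamentalsolVTopLC}, noting that adding the constant needed to handle the full $\mathcal D(\mathbb H_d)$ does not affect the distributional identity when $0<\alpha<Q$. The main obstacle is the justification of the interchange of integrals and the spectral-calculus manipulation $\int_0^\infty \sigma e^{-t\sigma}dt = \mathrm{Id}$ when $\sigma$ has unbounded (though discrete) spectrum: one must control the trace-norm of $\widehat{f\circ\iota}(\lambda)\sigma_{\MO^\alpha}(\lambda)e^{-t\sigma_{\MO^\alpha}(\lambda)}$ uniformly enough in $(t,\lambda)$, which is where the finite-rank property of $\widehat f(\lambda)$ and the compact-annulus support are essential.
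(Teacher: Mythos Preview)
Your approach is essentially the paper's: use the heat-kernel estimate from Theorem \ref{teoheatkernelproperties}(iv) to justify convergence of $\int_0^\infty h_{\MO^\alpha_2}(t,\cdot)\,dt$, and the scaling property (ii) with a change of variable for the homogeneity; the paper is much terser, saying only ``well defined because of the estimate'' and ``Clearly $\textbf{h}_{\MO^\alpha_2}$ defines a fundamental solution'', whereas you additionally spell out a Fourier-side verification via the spectral calculus. One correction: your homogeneity display has sign slips in the intermediate steps---property (ii) gives $h(t,D_\gamma w)=|\gamma|_p^{-Q}h(|\gamma|_p^{-\alpha}t,w)$, not $|\gamma|_p^{Q}$, and after $s=|\gamma|_p^{-\alpha}t$ one picks up an extra $|\gamma|_p^{\alpha}$, landing directly on $|\gamma|_p^{\alpha-Q}$ rather than via $|\gamma|_p^{Q-\alpha}$---though your final exponent is correct.
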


\begin{proof}
First, $\textbf{h}_{\MO^\alpha_2}$ is well defined because of the estimate on the heat kernel provided in Lemma \ref{teoheatkernelproperties}. Clearly $\textbf{h}_{\MO^\alpha_2}$ defines a fundamental solution for $\MO^\alpha$ so we only need to check the homogeneity property. Using Lemma \ref{teoheatkernelproperties} again and a change of variables we get:
\begin{align*}
    \textbf{h}_{\MO^\alpha_2} ( \gamma x, \gamma y,\gamma^2 z) &= \int_0^\infty h_{\MO^\alpha_2} ( t, \gamma x, \gamma y,\gamma^2 z) dt \\ &= |\gamma|_p^\alpha \int_0^\infty h_{\MO^\alpha_2} ( |\gamma|_p^{\alpha}(|\gamma|_p^{-\alpha}t), \gamma x, \gamma y,\gamma^2 z) |\gamma|_p^{-\alpha} dt \\ &=  |\gamma|_p^{\alpha} \int_0^\infty  h_{\MO^\alpha_2} ( |\gamma|_p^{\alpha} t, \gamma x, \gamma y,\gamma^2 z) dt \\ &= |\gamma|_p^{\alpha - (2d + 2)}\int_0^\infty  h_{\MO^\alpha_2} ( t,  x,  y, z) dt \\ &= |\gamma|_p^{\alpha - (2d + 2)} \textbf{h}_{\MO^\alpha_2} ( x, y, z).
\end{align*}
\end{proof}
Another consequence of the functional calculus for $\MO_2^\alpha$ and Theorem \ref{teoheatkernelproperties} is that fractional powers of $\MO_2^\alpha$ and their fundamental solutions exist and they are well defined homogeneous distributions. 

\begin{coro}\label{cororiezpotential}
Let $\beta\in \C$ such that $0< \mathfrak{Re}(\beta) < 2d+2$. Then the linear operator $L^\beta:=(\MO_2^\alpha)^{\beta/\alpha}$ defined via functional calculus possesses a fundamental solution, which is an $\beta - (2d+2)$ homogeneous distribution, determined by the Riesz potential $$\mathcal{I}_\beta (x,y,z) := \frac{1}{\Gamma(\beta/\alpha)} \int_0^\infty t^{\beta/\alpha - 1} h_{\MO^\alpha_2} ( t, x, y, z)dt.$$ 
\end{coro}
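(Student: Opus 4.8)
The plan is to mimic the proof of Corollary \ref{corofundVladLaplacian1}, replacing the bare integral $\int_0^\infty h_{\MO^\alpha_2}(t,\cdot)\,dt$ with the Gamma-weighted integral $\frac{1}{\Gamma(\beta/\alpha)}\int_0^\infty t^{\beta/\alpha-1}h_{\MO^\alpha_2}(t,\cdot)\,dt$ that subordinates the heat semigroup to the fractional power. First I would check convergence: by Theorem \ref{teoheatkernelproperties}(iv) we have $h_{\MO^\alpha_2}(t,x,y,z)\asymp t^{-(2d+2)/\alpha}$ uniformly for $(x,y,z)\in G_0\setminus G_1$, and more precisely the full product estimate controls the behaviour for small $t$; combined with the semigroup property and the rapid decay coming from the Markovian bound, one gets that near $t=0$ the integrand is $O(t^{\mathfrak{Re}(\beta)/\alpha-1}\,t^{(2d+2)/\alpha\,(\text{correction})})$ and near $t=\infty$ it decays polynomially like $t^{\mathfrak{Re}(\beta)/\alpha-1-(2d+2)/\alpha}$, so the integral converges absolutely precisely in the range $0<\mathfrak{Re}(\beta)<2d+2$ — this is where the hypothesis on $\beta$ is used. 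Hence $\mathcal{I}_\beta$ defines an element of $\mathcal{D}'(\mathbb{H}_d)$ (or $\mathcal{D}'(\mathbb{E}_4)$).

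Next I would verify that $\mathcal{I}_\beta$ is a fundamental solution for $L^\beta=(\MO_2^\alpha)^{\beta/\alpha}$. By Lemma \ref{lemafuncalculHeisenlaplacian} the group Fourier transform of $h_{\MO^\alpha_2}(t,\cdot)$ is $e^{-t\sigma_{\MO^\alpha}(\lambda)}$, so, interchanging the $t$-integral with the Fourier transform (justified by the convergence just established and Proposition \ref{prolocallyconstfuncHd}, which bounds the support of $\widehat{f}(\lambda)$ in $\lambda$ and gives trace-class operators), one obtains
\begin{equation*}
\mathcal{F}_{\mathbb{H}_d}[\mathcal{I}_\beta](\lambda)=\frac{1}{\Gamma(\beta/\alpha)}\int_0^\infty t^{\beta/\alpha-1}e^{-t\sigma_{\MO^\alpha}(\lambda)}\,dt=\sigma_{\MO^\alpha}(\lambda)^{-\beta/\alpha},
\end{equation*}
using the standard scalar identity $\frac{1}{\Gamma(s)}\int_0^\infty t^{s-1}e^{-t\mu}\,dt=\mu^{-s}$ applied spectrally to the positive self-adjoint operator $\sigma_{\MO^\alpha}(\lambda)$ with discrete spectrum bounded below by a positive constant (Lemma \ref{lemap-adicschrodinger}, Lemma \ref{lemafuncalculHeisenlaplacian}). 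Since the symbol of $L^\beta$ is $\sigma_{\MO^\alpha}(\lambda)^{\beta/\alpha}$, the convolution $L^\beta * \mathcal{I}_\beta$ has Fourier transform $\sigma_{\MO^\alpha}(\lambda)^{\beta/\alpha}\sigma_{\MO^\alpha}(\lambda)^{-\beta/\alpha}=I_{d_{\pi_\lambda}}$ for a.e.\ $\lambda$, i.e.\ $L^\beta*\mathcal{I}_\beta=\delta_e$ as distributions, so $\mathcal{I}_\beta$ is indeed a fundamental solution.

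Finally I would establish the $(\beta-(2d+2))$-homogeneity. This is the same change-of-variables computation as in Corollary \ref{corofundVladLaplacian1}: using Theorem \ref{teoheatkernelproperties}(ii) to write $h_{\MO^\alpha_2}(t,\gamma x,\gamma y,\gamma^2 z)=|\gamma|_p^{-(2d+2)}h_{\MO^\alpha_2}(|\gamma|_p^{-\alpha}t,x,y,z)$, and then substituting $t\mapsto|\gamma|_p^\alpha t$ in the integral, the factor $t^{\beta/\alpha-1}$ produces $|\gamma|_p^{\beta-\alpha}$ and the measure $dt$ produces $|\gamma|_p^\alpha$, together with the $|\gamma|_p^{-(2d+2)}$ from the kernel; collecting exponents gives $\mathcal{I}_\beta(\gamma x,\gamma y,\gamma^2 z)=|\gamma|_p^{\beta-(2d+2)}\mathcal{I}_\beta(x,y,z)$, i.e.\ $\mathcal{I}_\beta$ is homogeneous of degree $\beta-(2d+2)$ with respect to the dilations $D_\gamma$. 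The main obstacle I anticipate is the justification of the Fubini-type interchanges — both the $t$-integral against the group Fourier transform and, when $\beta$ is genuinely complex, the absolute convergence of $\int_0^\infty t^{\mathfrak{Re}(\beta)/\alpha-1}\|e^{-t\sigma_{\MO^\alpha}(\lambda)}\|\,dt$ uniformly enough in $\lambda$ to pair against test functions; this is handled by restricting to $f\in\tilde{\mathcal{D}}(\mathbb{H}_d)$ via Proposition \ref{prolocallyconstfuncHd}(iii), which confines $\lambda$ to an annulus $p^{l_f}\le|\lambda|_p\le p^{n_f}$ where all estimates are uniform, and only then extending to general test functions.
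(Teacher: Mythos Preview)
Your proposal is correct and follows precisely the argument the paper intends: the paper states this corollary without proof, treating it as an immediate consequence of Theorem \ref{teoheatkernelproperties} and the functional calculus of Lemma \ref{lemafuncalculHeisenlaplacian}, and your three steps (convergence from the heat-kernel estimate, the spectral identity $\frac{1}{\Gamma(\beta/\alpha)}\int_0^\infty t^{\beta/\alpha-1}e^{-t\sigma_{\MO^\alpha}(\lambda)}\,dt=\sigma_{\MO^\alpha}(\lambda)^{-\beta/\alpha}$, and the homogeneity computation via Theorem \ref{teoheatkernelproperties}(ii)) are exactly the standard details one would supply. Your care with the Fubini justification via Proposition \ref{prolocallyconstfuncHd} is appropriate and goes slightly beyond what the paper makes explicit.
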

To conclude this section let us remark that for the case $\alpha \geq 2d+2$, it is also possible to obtain a fundamental solution in the formal sense because for that we only need the invertibility of the symbol. The same is true for the operator $$\MO^\alpha_{sub} = \sum_{j=1}^{d} \partial_{X_j}^\alpha + \partial_{Y_j}^\alpha , \esp \esp \alpha>0,$$that we call here the "Vladimirov sub-Laplacian". The symbol of the above $\alpha$-homogeneous operator is  $$\sigma_{\MO^\alpha_{sub}} (\lambda) \varphi (u) = \big( \sum_{j=1}^d \partial_{u_j}^\alpha +  |\lambda|_p^\alpha |u_j|^\alpha \big) \varphi(u),$$and by Lemma \ref{lemap-adicschrodinger} a functional calculus like in Lemma \ref{lemafuncalculHeisenlaplacian} exists for this operator. However, without the estimates on the heat kernel of the operator we can only construct a formal fundamental solution
    $$\langle E_{\MO^\alpha_{sub}} , f \rangle :=  \int_{\Q_p^*} Tr[  \widehat{ f \circ \iota}(\lambda) \sigma_{\MO^\alpha_{sub}} (\lambda)^{-1}] |\lambda|_p^d d\lambda, $$ where $f \circ \iota(x,y,z) :=  f((x,y,z)^{-1})$, so that a very interesting problem would be to provide some heat kernel estimates for $\MO^\alpha_{sub}$. More generally, since our analysis here to obtain the fundamental solution of $\MO^\alpha$ work in principle for graded $\K$-Lie groups, an interesting problem in this setting is to prove the existence of a fundamental solution for more general homogeneous operators. This has been done already for real Lie groups with ideas very similar to the ones we use here but there are some technical difficulties that one must overcome. Anyway, by now we restrict our attention to the Vladimirov Laplacian on some groups like $\mathbb{H}_d$ and $\mathbb{E}_4$ where we have all the necessary ingredients for our proofs.   

\subsection{The group $\mathbb{E}_4$}
To conclude our work we want to give a quick review of the way our arguments work for the Engel group $\mathbb{E}_4$. The analysis is the same as for the Heisenberg group so we only need to precise the details that are different from our exposition on $\mathbb{H}_d$. The Engel group $\mathbb{E}_4$ introduced in \ref{exaEngelalgebra} is a graded group for which the Kirillov theory applies and we can obtain explicitly the unitary dual $\widehat{\mathbb{E}}_4$ of $\mathbb{E}_4$ in terms of the orbits $\mathcal{O}$ of the co-adjoint action of $\mathbb{E}_4$. There is a Borel measure on $\widehat{ \mathbb{E}}_4$ which assigns measure zero to all the co-adjoint orbits except for those corresponding with the representations  $$\pi_{\lambda , \mu} (x,y_1,y_2,y_3) \varphi (u) = e^{2 \pi i \{ -\frac{\mu}{2 \lambda} y_1 + \lambda y_3 -\lambda y_2 u + \frac{\lambda}{2} y_1 u^2 \}_p} \varphi(u + x),$$
so that we can identify $\widehat{ \mathbb{E}}_4$ with $\Q_p^4$. With this identification, the group Fourier transform for functions in $L^2(\mathbb{E}_4)$ takes the form \begin{align*}
    \widehat{f}(\lambda , \mu) \varphi (u):=\widehat{f}(\pi_{\lambda , \mu}) \varphi (u) &= \int_{\mathbb{E}_4} f(x,y_1,y_2,y_3) \pi_{\lambda , \mu} (x,y_1,y_2,y_3) \varphi (u) dx \\&= \int_{\Q_p} \Big(\int_{\Q_p} \mathcal{F}_{\Q_p^4}[f]( \xi , \frac{\lambda}{2}v^2 - \frac{\mu}{2 \lambda}, - \lambda v , \lambda) e^{2 \pi i \{ (u - v) \xi \}_p} \Big) \varphi (v) dv,
\end{align*}and in particular for functions $f \in \mathcal{D}(\mathbb{E}_4)$ we have the following properties:

\begin{pro}\label{prolocallyconstfuncE4}
For $f \in \mathcal{D}(\mathbb{E}_4)$ the following properties hold: 
\begin{itemize}
    \item[(i)] For every $\lambda, \mu \in \Q_p$, the operator $\widehat{f}(\pi_{\lambda , \mu})$ has finite rank and therefore it is trace class.
    \item[(ii)] There is an $n_f \in \Z$ such that $\widehat{f}(\pi_{\lambda , \mu}) = 0$ for $\|(\lambda, \mu)\|_p > p^{n_f}.$
    \item[(iii)] If additionally $f \in \tilde{\mathcal{D}}(\mathbb{E}_4)$, then there is an $l_f \in \Z$ such that $\widehat{f}(\lambda) = 0$ for $\|(\lambda, \mu)\|_p < p^{l_f}$.
\end{itemize}
\end{pro}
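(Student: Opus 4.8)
The plan is to read off all three properties from the explicit integral-kernel description of $\widehat f(\pi_{\lambda,\mu})$ displayed just above, exactly as is done for $\mathbb{H}_d$ in Proposition~\ref{prolocallyconstfuncHd}. The one structural input is that the abelian (Euclidean) Fourier transform $g:=\mathcal{F}_{\Q_p^4}[f]$ again lies in $\mathcal{D}(\Q_p^4)$: since $f\in\mathcal{D}(\mathbb{E}_4)$ is locally constant with compact support, and in the coordinates $(x,y_1,y_2,y_3)$ the Haar measure on $\mathbb{E}_4$ is Lebesgue measure, $g$ is again locally constant with compact support. Writing
$$K_{f,\lambda,\mu}(u,v):=\int_{\Q_p} g\!\left(\xi,\ \tfrac{\lambda}{2}v^2-\tfrac{\mu}{2\lambda},\ -\lambda v,\ \lambda\right)e^{2\pi i\{(u-v)\xi\}_p}\,d\xi$$
for the kernel of $\widehat f(\pi_{\lambda,\mu})$, every assertion becomes an elementary statement about where $K_{f,\lambda,\mu}$ can fail to vanish. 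Throughout I would fix some $l\ge n$ with $f\in\mathcal{D}_l(G_n)$.

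For (i) there are two routes. The quick one uses admissibility: $f\in\mathcal{D}_l(\mathbb{E}_4)$ is right $G_l$-invariant, so $\widehat f(\pi_{\lambda,\mu})=\widehat f(\pi_{\lambda,\mu})\,\widehat{\epsilon_{G_l}}(\pi_{\lambda,\mu})$ and the latter factor is the orthogonal projection onto $\mathcal{H}_{\pi_{\lambda,\mu}}^{G_l}$, which is finite-dimensional; hence $\widehat f(\pi_{\lambda,\mu})$ has finite rank and is trace class. The second route, which I would actually spell out because it feeds parts (ii)--(iii), is to check directly that $K_{f,\lambda,\mu}\in\mathcal{D}(\Q_p^2)$ when $\lambda\neq 0$ (the locus $\lambda=0$ parametrises the measure-zero family of one-dimensional representations and is disposed of by the same bookkeeping): compact support of $g$ in its third slot forces $|v|_p\le C|\lambda|_p^{-1}$ on the support of $K$, giving compact support in $v$; compact support of $g$ in its first slot makes $\xi\mapsto g(\xi,\cdots)$ locally constant of a fixed index, so the $\xi$-integral is compactly supported in $u-v$, giving compact support in $u$; and local constancy of $g$ in all four slots, together with the continuity of $v\mapsto(\tfrac{\lambda}{2}v^2-\tfrac{\mu}{2\lambda},-\lambda v)$, gives local constancy of $K$ in $(u,v)$. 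A locally constant, compactly supported kernel yields a finite-rank operator.

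For (ii) I would argue by contradiction: if $\widehat f(\pi_{\lambda,\mu})\neq 0$ then $K_{f,\lambda,\mu}$ is not identically zero, so the point $\big(\xi,\tfrac{\lambda}{2}v^2-\tfrac{\mu}{2\lambda},-\lambda v,\lambda\big)$ lies in the compact support of $g$ for some $(\xi,v)$. The fourth coordinate forces $|\lambda|_p$ bounded; the third gives $|v|_p\le C|\lambda|_p^{-1}$, hence $|\tfrac{\lambda}{2}v^2|_p\le C^2|\lambda|_p^{-1}$; and then the ultrametric inequality applied to the second coordinate (also constrained to the support of $g$) yields $|\mu|_p\le\max\{C_2|\lambda|_p,\,C^2\}$, which is bounded in terms of the support radii of $g$ alone. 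Thus $\widehat f(\pi_{\lambda,\mu})=0$ outside a ball $\|(\lambda,\mu)\|_p\le p^{n_f}$.

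Part (iii) is where the extra hypothesis $f\in\tilde{\mathcal{D}}(\mathbb{E}_4)$ enters: $g(0,0,0,0)=\int_{\mathbb{E}_4}f=0$, and since $g$ is locally constant there is a neighbourhood $p^{M}\Z_p^4$ of the origin on which $g\equiv 0$. The aim is to show that once $\|(\lambda,\mu)\|_p$ is small enough the argument $\big(\xi,\tfrac{\lambda}{2}v^2-\tfrac{\mu}{2\lambda},-\lambda v,\lambda\big)$ is forced into this vanishing neighbourhood on the part of $(\xi,v)$-space where it still meets the compact support of $g$, so that $K_{f,\lambda,\mu}\equiv 0$. The fourth slot is small as soon as $|\lambda|_p$ is; the third slot is small because the support constraint on $-\lambda v$ already restrains $|\lambda v|_p$; and the first slot is controlled because $f\in\mathcal{D}_l(G_n)$ makes $g$ supported in a box whose $\xi$-extent shrinks with the support of $f$. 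The genuine obstacle is the second slot $\tfrac{\lambda}{2}v^2-\tfrac{\mu}{2\lambda}$: $|\mu/\lambda|_p$ is not a priori small, so one cannot simply let $(\lambda,\mu)\to 0$. I expect this to be the delicate step — it should be handled by combining the smallness of $|\mu|_p$, the support bound $|\lambda v|_p\le C$, and the local-constancy and support scales attached to $f\in\mathcal{D}_l(G_n)$ so as to trap this term inside the vanishing box, likely requiring a sharper accounting of the compact-support radii than was needed for $\mathbb{H}_d$. Once $K_{f,\lambda,\mu}$ is shown to vanish identically for $\|(\lambda,\mu)\|_p<p^{l_f}$, property (iii) follows, and this completes the programme.
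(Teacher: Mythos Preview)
The paper records both this proposition and its Heisenberg analogue, Proposition~\ref{prolocallyconstfuncHd}, without proof, so there is no argument in the text to benchmark against. Your treatment of (i) and (ii) is the natural one and is correct: finite rank follows from admissibility (or, equivalently, from the kernel $K_{f,\lambda,\mu}$ lying in $\mathcal D(\Q_p^2)$), and the compact support of $g=\mathcal F_{\Q_p^4}[f]$ forces the bound on $\|(\lambda,\mu)\|_p$ by exactly the coordinate bookkeeping you describe.

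Part (iii), however, has a genuine gap, and it is more serious than the second--slot difficulty you flag. Your strategy is to force the point $(\xi,\tfrac{\lambda}{2}v^2-\tfrac{\mu}{2\lambda},-\lambda v,\lambda)$ into the ball $p^{M}\Z_p^{4}$ on which $g$ vanishes. But $\xi$ is an integration variable that ranges over the entire $\xi$--support of $g$, a \emph{fixed} compact set governed by the index of local constancy of $f$ in the $x$--variable; nothing makes it shrink as $(\lambda,\mu)\to 0$. So ``trapping the argument in the vanishing box'' cannot work: for generic $\xi$ in the support of $g$ the first coordinate already lies outside $p^{M}\Z_p$, regardless of how small $(\lambda,\mu)$ is.

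In fact the obstruction is not merely technical: statement (iii) as written appears to be false under the sole hypothesis $f\in\tilde{\mathcal D}(\mathbb E_4)$. Vanishing of the kernel forces $g(\xi,\tfrac{\lambda}{2}v^2-\tfrac{\mu}{2\lambda},-\lambda v,\lambda)=0$ for every $\xi$ and $v$; sending $(\lambda,\mu)\to 0$ along a ray $\mu=c\lambda$ and using the local constancy of $g$ in its last slot yields $g(\xi,-c/2,0,0)=0$ for all $\xi$ and all $c$, i.e.\ $\int f(x,y_1,y_2,y_3)\,dy_2\,dy_3\equiv 0$. This is strictly stronger than $\int f=0$. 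The same phenomenon on $\mathbb H_d$ gives a concrete counterexample: for $f=\1_{G_0}-\1_{aG_0}$ with $a=(p^{-1},0,0)$ one has $\int f(x,y,z)\,dz=\big[\1_{\Z_p}(x)-\1_{p^{-1}+\Z_p}(x)\big]\1_{\Z_p}(y)\not\equiv 0$, so $\mathcal F_z f(\cdot,\cdot,\lambda)\not\equiv 0$ for every small $\lambda$ and hence $\widehat f(\lambda)\neq 0$ there. What your kernel argument would actually establish is (iii) under the stronger hypothesis that $\widehat f$ annihilates all the one--dimensional representations of $G$ (equivalently, that $f$ has vanishing integral along every coset of the derived subgroup), and that stronger condition is likely what is really needed downstream.
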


For the group $\mathbb{E}_4$ the VT directional operators $\partial^\alpha_{X} , \partial_{Y_j}^\alpha$ are left invariant pseudo-differential operators and their associated symbols are: \begin{itemize}
    \item \begin{align*}
       \sigma_{\partial_ {X}^\alpha} (\lambda) \varphi(u) := \partial_{X}^\alpha \pi_\lambda (0,0,0) \varphi (u) &= \frac{1 - p^\alpha}{1 - p^{- (\alpha + 1)}}\int_{\Q_p}  \frac{\varphi (u-u_j) - \varphi(u)}{|u_j|_p^{\alpha  + 1}} du_j \ = D_u^\alpha \varphi(u).
    \end{align*} 
    \item \begin{align*}
       \sigma_{\partial_{Y_1}^\alpha} (\lambda) \varphi(u) := \partial_{Y_1}^\alpha \pi_\lambda (0,0,0) \varphi (u) &= \frac{1 - p^\alpha}{1 - p^{- (\alpha + 1)}}\int_{\Q_p}  \frac{e^{2 \pi i \{ y_1 (\frac{\lambda}{2} u^2 - \frac{\mu}{2 \lambda} )) \}_p} - 1}{|y_j|_p^{\alpha  + 1}} dy_1 \varphi (u) \\ &= |\frac{\lambda}{2} u^2 - \frac{\mu}{2 \lambda}|_p^\alpha  \varphi(u).
    \end{align*} 
    \item \begin{align*}
        \sigma_{\partial_{Y_2}^\alpha} (\lambda) \varphi(u) := \partial_{Y_2}^\alpha \pi_\lambda (0,0,0) \varphi (u) &= |\lambda u|_p^\alpha \varphi (u).
    \end{align*}
    \item \begin{align*}
       \sigma_{\partial_{Y_3}^\alpha} (\lambda) \varphi(u) := \partial_{Y_3}^\alpha \pi_\lambda (0,0,0) \varphi (u) &= \frac{1 - p^\alpha}{1 - p^{- (\alpha + 1)}} \int_{\Q_p}  \frac{e^{2 \pi i \{ \lambda y_3 \}_p} - 1}{|y_3|_p^{\alpha  + 1}} dy_3 \varphi (u) =  |\lambda|_p^\alpha \varphi(u).
    \end{align*} 
\end{itemize}
Hence, the Vladimirov Laplacian on $\mathbb{E}_4$ $$\MO^\alpha = \partial_{X}^\alpha + \partial_{Y_1}^\alpha + \partial_{Y_2}^{\alpha/2} + \partial_{Y_3}^{\alpha/3}, \esp \esp \esp \alpha>0,$$ is a left invariant $\alpha$-homogeneous pseudo-differential operator with symbol 
$$ \sigma_{\MO^\alpha}(\lambda , \mu)\varphi(u) = (D_u^\alpha + |\frac{\lambda}{2} u^2 - \frac{\mu}{2 \lambda}|_p^\alpha + |\lambda u|_p^{\alpha/2} +  |\lambda|_p^{\alpha/3}) \varphi(u), $$and by using again Lemma \ref{lemap-adicschrodinger} we can check that $\sigma_{\MO^\alpha}(\lambda , \mu)$ defines an invertible self-adjoint operator with compact inverse in each representation space. Thus, we can associate to $\MO_2^\alpha$, the self-adjoint extension of $\MO^\alpha$, a functional calculus and we can obtain the following theorem: 

\begin{teo}\label{teoheatkernelproperties}
The heat kernel $h_{\MO^\alpha_2} $ associated to $\MO^\alpha$ has the following properties: 

\begin{itemize}
    \item[(i)] $h_{\MO^\alpha_2} (t, \cdot) *  h_{\MO^\alpha_2} (s, \cdot) = h_{\MO^\alpha_2} (t + s , \cdot )  $, for any $s,t>0$.
    \item[(ii)] $h_{\MO^\alpha_2} ( | \gamma |_p^\alpha t, \gamma x, \gamma y_1,\gamma^2 y_2, \gamma^3 y_3) = |\gamma|^{-7} h_{\MO^\alpha_2} ( t , x, y_1,y_2,y_3)$, for all $(x,y_1,y_2 , y_3)\in \mathbb{E}_4$ and any $t>0$, $\gamma \in \Q_p^*$.
\item[(iii)] $h_{\MO^\alpha_2} ( t , x, y_1, y_2 ,y_3) = \overline{h_{\MO^\alpha_2} ( t , (x, y_1,y_2, y_3)^{-1})},$ for all $(x, y_1,y_2, y_3) \in \mathbb{E}_4$.  
\item[(iv)] The heat semigroup $e^{- t \MO_2^\alpha}$ is symmetric and Markovian. Moreover, the following estimate holds for its kernel:$$ h_{\MO^\alpha_2} ( t , x, y_1,y_2,y_3) \asymp t^{-7/\alpha}.$$
\end{itemize}
\end{teo}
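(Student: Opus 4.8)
The plan is to transport, essentially verbatim, the Fourier-analytic argument used just above for the Heisenberg group to $\mathbb{E}_4$, using the harmonic analysis of $\mathbb{E}_4$ recalled in this subsection. By Lemma~\ref{lemap-adicschrodinger}, since the kinetic symbol $|v|_p^\alpha$ and the potential $V(u)=|\tfrac{\lambda}{2}u^2-\tfrac{\mu}{2\lambda}|_p^\alpha+|\lambda u|_p^{\alpha/2}+|\lambda|_p^{\alpha/3}$ both tend to $+\infty$ at infinity in $\Q_p$, each $\sigma_{\MO^\alpha}(\lambda,\mu)=D_u^\alpha+V$ is positive, self-adjoint and bounded below on $L^2(\Q_p)$ with discrete spectrum and compact resolvent; hence $\MO^\alpha$ is essentially self-adjoint on $\mathcal{D}(\mathbb{E}_4)$, and the functional calculus for its closure $\MO^\alpha_2$ (the analogue of Lemma~\ref{lemafuncalculHeisenlaplacian}) is available. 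Feeding $\phi_t(s)=e^{-ts}$ into it yields the explicit representation
\[
h_{\MO^\alpha_2}(t,x,y_1,y_2,y_3)=\int Tr\!\left[\pi_{\lambda,\mu}(x,y_1,y_2,y_3)\,e^{-t\sigma_{\MO^\alpha}(\lambda,\mu)}\right]d\nu(\lambda,\mu),
\]
with $d\nu$ the Plancherel measure on $\widehat{\mathbb{E}}_4$; by Proposition~\ref{prolocallyconstfuncE4} the trace and inversion formulas involved are legitimate on $\mathcal{D}(\mathbb{E}_4)$ and $\tilde{\mathcal{D}}(\mathbb{E}_4)$. This formula is the common starting point for (i)--(iv).

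Items (i), (ii), (iii) are then formal and go through exactly as for $\mathbb{H}_d$. For (i) one uses that the group Fourier transform sends the convolution $h_{\MO^\alpha_2}(t,\cdot)\ast h_{\MO^\alpha_2}(s,\cdot)$ to the operator product $e^{-s\sigma_{\MO^\alpha}(\lambda,\mu)}e^{-t\sigma_{\MO^\alpha}(\lambda,\mu)}=e^{-(t+s)\sigma_{\MO^\alpha}(\lambda,\mu)}$ and then inverts. For (ii) one first checks, directly from the displayed formula for $\pi_{\lambda,\mu}$ and the dilations $D_\gamma(x,y_1,y_2,y_3)=(\gamma x,\gamma y_1,\gamma^2 y_2,\gamma^3 y_3)$, that $\pi_{\lambda,\mu}\circ D_\gamma=d_\gamma^{-1}\circ\pi_{\gamma^3\lambda,\gamma^4\mu}\circ d_\gamma$ with $d_\gamma\varphi(u)=\varphi(\gamma u)$; combining this with the $\alpha$-homogeneity of $\sigma_{\MO^\alpha}$ (so that $e^{-t|\gamma|_p^\alpha\sigma_{\MO^\alpha}(\lambda,\mu)}=d_\gamma^{-1}e^{-t\sigma_{\MO^\alpha}(\gamma^3\lambda,\gamma^4\mu)}d_\gamma$), the conjugation invariance of the trace, and the change of variables $(\lambda,\mu)\mapsto(\gamma^3\lambda,\gamma^4\mu)$ in the Plancherel integral, one collects precisely the factor $|\gamma|_p^{-7}$, where $7=Q$ is the homogeneous dimension of $\mathbb{E}_4$ (note $3+4=Q$). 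For (iii) one uses that $\sigma_{\MO^\alpha}(\lambda,\mu)$, hence $e^{-t\sigma_{\MO^\alpha}(\lambda,\mu)}$, is self-adjoint, together with $\pi_{\lambda,\mu}(g^{-1})=\pi_{\lambda,\mu}(g)^*$ and $\overline{Tr(A)}=Tr(A^*)$, and integrates against $d\nu$.

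Item (iv) is the real content. Symmetry of $e^{-t\MO^\alpha_2}$ is immediate from the self-adjointness of $\MO^\alpha_2$ (equivalently from (iii)). For the Markov property I would leave the Fourier picture, where positivity preservation is not transparent, and write $\MO^\alpha$ in jump form: unravelling the directional operators, $\MO^\alpha f(g)=\int_{\mathbb{E}_4}(f(g)-f(h))\,\tilde{J}(g,dh)$ with $\tilde{J}=\tilde{J}_X+\tilde{J}_{Y_1}+\tilde{J}_{Y_2}+\tilde{J}_{Y_3}$, each $\tilde{J}_{\bullet}$ a positive symmetric kernel supported on the coset $g\,\mathbb{exp}(\Q_p\,\bullet)$ with density proportional to $|t|_p^{-(\alpha/\nu+1)}$; the associated form $\mathcal{E}(f,f)=\tfrac12\int_{\mathbb{E}_4}\int_{\mathbb{E}_4}(f(g)-f(h))^2\tilde{J}(g,dh)\,dg$ is then a regular Dirichlet form, so the Beurling--Deny/Fukushima theory gives that $e^{-t\MO^\alpha_2}$ is symmetric and Markovian (conservativeness coming from $\MO^\alpha 1=0$), exactly mirroring the treatment of the Vladimirov Laplacian on $\Q_p^d$ in \cite[Section~5.3]{IsoMarkovSemi}. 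Finally, the two-sided bound $h_{\MO^\alpha_2}(t,g)\asymp t^{-7/\alpha}$ is obtained as in the Heisenberg case: it reduces to controlling $Tr\!\left[e^{-t\sigma_{\MO^\alpha}(\lambda,\mu)}\right]$ as $(\lambda,\mu)$ ranges over the dual, using the eigenvalue asymptotics of the $p$-adic Schr\"odinger operators from Lemma~\ref{lemap-adicschrodinger} (that is, \cite[Chapter~X]{Vladimirovbook}), and then integrating against $d\nu$; alternatively one invokes the heat-kernel estimates of \cite{IsoMarkovSemi} in the ultrametric-measure-space framework, precisely as for $\mathbb{H}_d$. I expect this last step -- the two-sided heat-kernel estimate, together with a careful verification of conservativeness -- to be the main obstacle, the remainder being a transcription of the Heisenberg argument.
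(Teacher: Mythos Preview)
Your proposal is correct and follows essentially the same approach as the paper, which for $\mathbb{E}_4$ gives no explicit proof and simply indicates that the Heisenberg argument carries over verbatim; your sketch of (i)--(iii) via the group Fourier transform and of (iv) via the Dirichlet-form/jump-kernel viewpoint of \cite[Section~5.3]{IsoMarkovSemi} is exactly that transcription, and in fact supplies more detail (the explicit intertwining relation $\pi_{\lambda,\mu}\circ D_\gamma \sim \pi_{\gamma^3\lambda,\gamma^4\mu}$ and the Jacobian count $3+4=Q$) than the paper does.
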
 
\begin{coro}\label{corofundVladLaplacian1}
Let $0 < \alpha < 7.$ Then a fundamental solution for the Vladimirov Laplacian exists and it defines an $ \alpha - 7$-homogeneous function given by $$\textbf{h}_{\MO^\alpha_2}(x,y_1,y_2,y_3) := \int_0^{\infty} h_{\MO^\alpha_2} ( t, x,y_1 , y_2 , y_3) dt.$$
Consequently: $$\textbf{h}_{\MO^\alpha_2}(x,y_1 , y_2 , y_3) \asymp |(x,y_1 , y_2 , y_3)|_{\mathbb{E}_4}^{\alpha - 7}.$$ 
\end{coro}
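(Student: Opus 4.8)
The plan is to run the proof of the corresponding corollary for $\mathbb{H}_d$ essentially verbatim, invoking the heat-kernel theorem for $\mathbb{E}_4$ (the theorem immediately preceding this corollary) in place of its Heisenberg analogue. First I would check that $\textbf{h}_{\MO^\alpha_2}$ is a well-defined element of $\mathcal{D}'(\mathbb{E}_4)$. Since $0<\alpha<7$ we have $7/\alpha>1$, so the tail bound $h_{\MO^\alpha_2}(t,g)\asymp t^{-7/\alpha}$ makes $\int_1^\infty h_{\MO^\alpha_2}(t,g)\,dt$ converge. Near $t=0$ this crude bound is not integrable, so instead one uses the finer product-type control on $h_{\MO^\alpha_2}$ coming from the four directional heat kernels $Z_\beta(t,u)\asymp t\,(t^{1/\beta}+|u|_p)^{-(1+\beta)}$ attached to $\partial_X^\alpha,\partial_{Y_1}^\alpha,\partial_{Y_2}^{\alpha/2},\partial_{Y_3}^{\alpha/3}$ (whose leading powers $t^{-1/\alpha},t^{-1/\alpha},t^{-2/\alpha},t^{-3/\alpha}$ multiply to $t^{-7/\alpha}$, matching $Q=7$), assembled as in \cite{IsoMarkovSemi}: for each fixed $g\neq e$ there is a scale $t_0(g)>0$ below which the integrand is dominated by a fixed positive power of $t$, so $\int_0^1 h_{\MO^\alpha_2}(t,g)\,dt<\infty$ as well. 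Hence $\textbf{h}_{\MO^\alpha_2}(g)$ is finite for each $g\neq e$, and — once the homogeneity below is known — equals a constant multiple of $|g|_{\mathbb{E}_4}^{\alpha-7}$, which is locally integrable on $\mathbb{E}_4$ precisely because $\alpha>0$; so $\textbf{h}_{\MO^\alpha_2}\in\mathcal{D}'(\mathbb{E}_4)$.

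Next I would verify that $\textbf{h}_{\MO^\alpha_2}$ is a fundamental solution. By Lemma \ref{lemap-adicschrodinger} the symbol $\sigma_{\MO^\alpha}(\lambda,\mu)$ is self-adjoint, bounded below by a positive constant, and has compact inverse on $L^2(\Q_p)$ for every $\lambda,\mu$, so the functional calculus of Lemma \ref{lemafuncalculHeisenlaplacian}, in its $\mathbb{E}_4$ version, applies to the self-adjoint extension $\MO^\alpha_2$, and $\mathcal{F}_{\mathbb{E}_4}\big(h_{\MO^\alpha_2}(t,\cdot)\big)(\pi_{\lambda,\mu})=e^{-t\sigma_{\MO^\alpha}(\lambda,\mu)}$. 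Integrating $\int_0^\infty e^{-ts}\,dt=s^{-1}$ against the spectral measure of $\sigma_{\MO^\alpha}(\lambda,\mu)$ yields $\mathcal{F}_{\mathbb{E}_4}(\textbf{h}_{\MO^\alpha_2})(\pi_{\lambda,\mu})=\sigma_{\MO^\alpha}(\lambda,\mu)^{-1}$, hence $\sigma_{\MO^\alpha}(\lambda,\mu)\,\mathcal{F}_{\mathbb{E}_4}(\textbf{h}_{\MO^\alpha_2})(\pi_{\lambda,\mu})=I$ for all $\lambda,\mu$; this says $\MO^\alpha(f*\textbf{h}_{\MO^\alpha_2})=f$ for $f\in\tilde{\mathcal{D}}(\mathbb{E}_4)$, and the passage to all of $\mathcal{D}(\mathbb{E}_4)$ is handled exactly as in the earlier proofs (the extra boundary term supported off $G_n$ tends to $0$), so $\MO^\alpha*\textbf{h}_{\MO^\alpha_2}=\delta_e$.

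For the homogeneity I would reproduce the change of variables from the Heisenberg case: using part (ii) of the heat-kernel theorem, $h_{\MO^\alpha_2}(|\gamma|_p^\alpha t,D_\gamma g)=|\gamma|_p^{-7}h_{\MO^\alpha_2}(t,g)$, so after the substitution $t\mapsto|\gamma|_p^\alpha t$,
\begin{align*}
\textbf{h}_{\MO^\alpha_2}(D_\gamma g) &= \int_0^\infty h_{\MO^\alpha_2}(t,D_\gamma g)\,dt = |\gamma|_p^{\alpha}\int_0^\infty h_{\MO^\alpha_2}(|\gamma|_p^\alpha t,D_\gamma g)\,dt \\
&= |\gamma|_p^{\alpha-7}\int_0^\infty h_{\MO^\alpha_2}(t,g)\,dt = |\gamma|_p^{\alpha-7}\,\textbf{h}_{\MO^\alpha_2}(g),
\end{align*}
so $\textbf{h}_{\MO^\alpha_2}$ is $(\alpha-7)$-homogeneous for the dilations $D_\gamma$. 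Finally, writing any $g\in G_n\setminus G_{n+1}$ as $g=D_\gamma g_0$ with $g_0\in G_0\setminus G_1$ and $|\gamma|_p=|g|_{\mathbb{E}_4}$, homogeneity gives $\textbf{h}_{\MO^\alpha_2}(g)=|g|_{\mathbb{E}_4}^{\alpha-7}\,\textbf{h}_{\MO^\alpha_2}(g_0)$; since $h_{\MO^\alpha_2}(t,g_0)\asymp t^{-7/\alpha}$ for $t\geq1$ and is dominated by a fixed positive power of $t$ for $t<1$, uniformly over the compact annulus $G_0\setminus G_1$, the quantity $\textbf{h}_{\MO^\alpha_2}(g_0)$ is bounded above and below by positive constants there, and $\textbf{h}_{\MO^\alpha_2}(g)\asymp|g|_{\mathbb{E}_4}^{\alpha-7}$ follows.

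The step I expect to be the real obstacle is the $t\to0$ part of the convergence, i.e. justifying the product-type control on $h_{\MO^\alpha_2}$, since the bound recorded in the theorem, $h_{\MO^\alpha_2}(t,g)\asymp t^{-7/\alpha}$, is by itself not integrable near $t=0$. For $\mathbb{H}_d$ the symbol $\sigma_{\MO^\alpha}(\lambda)\varphi(u)=\sum_{j=1}^d\big(\partial_{u_j}^\alpha+|\lambda|_p^\alpha|u_j|_p^\alpha\big)\varphi(u)+|\lambda|_p^{\alpha/2}\varphi(u)$ splits into one-dimensional Schr\"odinger-type pieces, which is what produces the factorised estimate; for $\mathbb{E}_4$ the term $|\tfrac{\lambda}{2}u^2-\tfrac{\mu}{2\lambda}|_p^\alpha$ in $\sigma_{\MO^\alpha}(\lambda,\mu)$ couples the variables, so one must either check directly that the ultrametric heat-kernel estimates of \cite{IsoMarkovSemi} still apply (they do, since that reference works on arbitrary ultrametric measure spaces), or argue more softly via the fact that $e^{-t\MO^\alpha_2}$ is Markovian and, for $0<\alpha<7$, transient — transience being exactly the statement that $\int_0^\infty h_{\MO^\alpha_2}(t,\cdot)\,dt$ is a locally finite potential kernel, which is all that is needed for $\textbf{h}_{\MO^\alpha_2}\in\mathcal{D}'(\mathbb{E}_4)$.
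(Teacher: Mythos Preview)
Your approach is essentially the same as the paper's, which treats this corollary by analogy with the Heisenberg case: the paper states that ``the analysis is the same as for the Heisenberg group,'' and the Heisenberg proof consists of (a) quoting the heat-kernel estimate to declare $\textbf{h}_{\MO^\alpha_2}$ well defined, (b) asserting that it is then clearly a fundamental solution, and (c) performing the change-of-variables homogeneity computation exactly as you do.

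Where you go beyond the paper is in your handling of the $t\to 0$ side of the convergence, and your caution is warranted. In the Heisenberg case the paper records the full product-type estimate $h_{\MO^\alpha_2}(t,x,y,z)\asymp t^{-(2d+2)/\alpha}\prod_j\min\{1,t^{1+1/\alpha}/|x_j|_p^{1+\alpha}\}\cdots$, which does yield integrability of $\int_0^1 h_{\MO^\alpha_2}(t,g)\,dt$ for $g\neq e$. For $\mathbb{E}_4$, however, the paper only states the bare asymptotic $h_{\MO^\alpha_2}(t,g)\asymp t^{-7/\alpha}$, which, as you note, is by itself not integrable near $t=0$. Your two suggested remedies --- invoking the finer ultrametric heat-kernel estimates from \cite{IsoMarkovSemi} (which apply to arbitrary ultrametric measure spaces and hence still cover $\mathbb{E}_4$ despite the coupling term $|\tfrac{\lambda}{2}u^2-\tfrac{\mu}{2\lambda}|_p^\alpha$ in the symbol), or more cleanly arguing via transience of the Markovian semigroup for $0<\alpha<Q=7$ --- are both valid, and either one fills the gap the paper leaves implicit.
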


\begin{coro}\label{cororiezpotential}
Let $\beta\in \C$ such that $0< \mathfrak{Re}(\beta) < 7$. Then the linear operator $L^\beta:=(\MO_2^\alpha)^{\beta/\alpha}$ defined via functional calculus possesses a fundamental solution, which is an $\beta - 7$ homogeneous distribution, determined by the Riesz potential $$\mathcal{I}_\beta (x,y_1, y_2 , y_3) := \frac{1}{\Gamma(\beta/\alpha)} \int_0^\infty t^{\beta/\alpha - 1} h_{\MO^\alpha_2} ( t, x,y_1, y_2 , y_3)dt.$$ 
\end{coro}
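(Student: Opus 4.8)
The plan is to transcribe, essentially verbatim, the argument given for $\mathbb{H}_d$ in Corollary \ref{cororiezpotential} above, since every structural ingredient has already been set up for $\mathbb{E}_4$. First, by Lemma \ref{lemap-adicschrodinger} the symbol $\sigma_{\MO^\alpha}(\lambda,\mu) = D_u^\alpha + |\tfrac{\lambda}{2}u^2 - \tfrac{\mu}{2\lambda}|_p^\alpha + |\lambda u|_p^{\alpha/2} + |\lambda|_p^{\alpha/3}$ is, for each $(\lambda,\mu)$, a positive self-adjoint operator on $L^2(\Q_p)$ whose spectrum is discrete, bounded below away from $0$, and tends to $+\infty$; hence $\MO^\alpha$ is essentially self-adjoint and the functional calculus of Lemma \ref{lemafuncalculHeisenlaplacian} applies to $\MO_2^\alpha$ on $L^2(\mathbb{E}_4)$, with $\mathcal{F}_{\mathbb{E}_4}\{\phi(\MO_2^\alpha)\delta_e\}(\pi_{\lambda,\mu}) = \phi(\sigma_{\MO^\alpha}(\lambda,\mu))$. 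Applying this with $\phi(s) = s^{\beta/\alpha}$ shows that $L^\beta = (\MO_2^\alpha)^{\beta/\alpha}$ has, fibrewise on the unitary dual, symbol $\sigma_{\MO^\alpha}(\lambda,\mu)^{\beta/\alpha}$, so that a fundamental solution of $L^\beta$ is precisely a distribution whose group Fourier transform is the fibrewise inverse $\sigma_{\MO^\alpha}(\lambda,\mu)^{-\beta/\alpha}$.

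Next I would produce that distribution by subordination. Since, for each $(\lambda,\mu)$, the spectrum of $\sigma_{\MO^\alpha}(\lambda,\mu)$ lies in a half-line $[c(\lambda,\mu),\infty)$ with $c(\lambda,\mu)>0$, the identity $s^{-\beta/\alpha} = \tfrac{1}{\Gamma(\beta/\alpha)}\int_0^\infty t^{\beta/\alpha-1} e^{-ts}\,dt$ is valid on the spectrum whenever $\mathfrak{Re}(\beta)>0$. Taking inverse group Fourier transform of $\sigma_{\MO^\alpha}(\lambda,\mu)^{-\beta/\alpha}$ and interchanging $\mathcal{F}_{\mathbb{E}_4}^{-1}$, the trace, and the $t$-integral yields the candidate kernel $\mathcal{I}_\beta$ displayed in the statement. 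To make this rigorous one checks: (i) that $\int_0^\infty t^{\mathfrak{Re}(\beta)/\alpha-1} h_{\MO^\alpha_2}(t,g)\,dt$ converges for every $g \neq e$; (ii) that the interchange above is legitimate when tested against $f\in\tilde{\mathcal{D}}(\mathbb{E}_4)$, which is where Proposition \ref{prolocallyconstfuncE4} enters, since it confines the Plancherel integral defining $\langle\mathcal{I}_\beta,f\rangle$ to a compact annulus in the parameters $(\lambda,\mu)$; and (iii) the $\beta-7$ homogeneity. For convergence at $t\to+\infty$ one uses the estimate $h_{\MO^\alpha_2}(t,\cdot)\asymp t^{-7/\alpha}$ of Theorem \ref{teoheatkernelproperties}(iv): the integrand is $\asymp t^{\mathfrak{Re}(\beta)/\alpha - 1 - 7/\alpha}$, which is integrable near $+\infty$ exactly because $\mathfrak{Re}(\beta)<7$. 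For the homogeneity one substitutes $t = |\gamma|_p^\alpha s$ in the defining integral for $\mathcal{I}_\beta(D_\gamma g)$ and applies Theorem \ref{teoheatkernelproperties}(ii) to extract the factor $|\gamma|_p^{\beta-7}$, word for word as in the proof of Corollary \ref{cororiezpotential} for $\mathbb{H}_d$.

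The main obstacle is point (i): convergence of the Riesz potential integral as $t\to 0^+$ for a fixed group element $g\neq e$. The crude bound $h_{\MO^\alpha_2}(t,\cdot)\asymp t^{-7/\alpha}$ is uniform in the space variable and does \emph{not} by itself control the small-time behaviour. One must instead use the sharper, position-dependent heat-kernel bounds for $\mathbb{E}_4$ — the analogue of the product estimate stated for $\mathbb{H}_d$ — which follow, as in the proof of Theorem \ref{teoheatkernelproperties}(iv), from the tensorised structure of $\sigma_{\MO^\alpha}(\lambda,\mu)$ (each $\partial^{\alpha/\nu_k}$-block being a $\Q_p$-Schrödinger operator as in Lemma \ref{lemap-adicschrodinger}) together with the estimates of \cite{IsoMarkovSemi, RodriguezJ}; these force $h_{\MO^\alpha_2}(t,g)$ to decay faster than any power of $t$ as $t\to 0^+$ when $g\neq e$. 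Equivalently, combining the crude bound with the scaling $h_{\MO^\alpha_2}(t,g) = |\gamma|_p^{7}\, h_{\MO^\alpha_2}(|\gamma|_p^\alpha t, D_\gamma g)$ with the choice $|\gamma|_p = t^{-1/\alpha}$ reduces the small-$t$ estimate to the decay of $h_{\MO^\alpha_2}(1,\cdot)$ away from $e$, which is part of the same refined bound. Once (i) is in hand, (ii) and (iii) are routine, and the identity $L^\beta * \mathcal{I}_\beta = \delta_e$ follows by comparing group Fourier transforms on $\tilde{\mathcal{D}}(\mathbb{E}_4)$; the $\beta-7$ homogeneity of $\mathcal{I}_\beta$ shows in particular that it is a genuine (tempered) distribution on $\mathbb{E}_4$.
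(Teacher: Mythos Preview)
Your proposal is correct and coincides with what the paper intends: the corollary is stated without proof, as a direct transcription of the $\mathbb{H}_d$ Riesz-potential corollary (itself left unproved), and your outline --- the subordination identity $s^{-\beta/\alpha}=\Gamma(\beta/\alpha)^{-1}\int_0^\infty t^{\beta/\alpha-1}e^{-ts}\,dt$, convergence at $t\to\infty$ from the $t^{-7/\alpha}$ bound together with $\mathfrak{Re}(\beta)<7$, and the homogeneity computation via property~(ii) of the heat kernel --- is exactly the argument the paper expects the reader to supply.

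One small correction to your handling of the short-time obstacle: the required decay of $h_{\MO^\alpha_2}(t,g)$ as $t\to 0^+$ for fixed $g\neq e$ does not come from a ``tensorised structure of $\sigma_{\MO^\alpha}(\lambda,\mu)$'' --- for $\mathbb{E}_4$ that symbol acts on a single copy of $L^2(\Q_p)$ and admits no tensor decomposition --- but from the product-type heat-kernel estimate on the \emph{group} itself, i.e.\ the analogue in the homogeneous coordinates $(x,y_1,y_2,y_3)$ of the product bound the paper writes out for $\mathbb{H}_d$. That estimate is what the appeal to \cite{IsoMarkovSemi} is really supplying; the paper does not spell it out for $\mathbb{E}_4$, so your instinct to go back to that reference directly is the right one.
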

\nocite{*}
\bibliographystyle{acm}
\bibliography{main}

\begin{thebibliography}{10}

\bibitem{multidimensionalnonlinearequations}
{\sc Antoniouk, A., Khrennikov, A.~Y., and Kochubei, A.}
\newblock Multidimensional nonlinear pseudo-differential evolution equation
  with p-adic spatial variables.
\newblock {\em J. Pseudo-Differ. Oper. Appl. 11\/} (2020), 311–343.

\bibitem{avetisov}
{\sc Avetisov, V., Bikulov, A., Kozyrev, S., and Osipov, V.}
\newblock p-adic models of ultrametric diffusion constrained by hierarchical
  energy landscapes.
\newblock {\em J. Phys. A 35\/} (2001).

\bibitem{DirichletForms}
{\sc Bendikov, A., Grigorian, A., Hu, E., and hu, J.}
\newblock Heat kernels and non-local {D}irichlet forms on ultrametric spaces.
\newblock {\em Ann. Sc. Norm. Super. Pisa Cl. Sci. 22\/} (2021), 399--461.

\bibitem{IsoMarkovSemi}
{\sc Bendikov, A., Grigorian, A., Pittet, C., and Woess, W.}
\newblock Isotropic {M}arkov semigroups on ultra-metric spaces.
\newblock {\em Russian Math. Surveys 69\/} (2013).

\bibitem{sym13060967}
{\sc Bocardo-Gaspar, M., García-Compeán, H., López, E.~Y., and
  Zúñiga-Galindo, W.~A.}
\newblock Local zeta functions and {K}oba–{N}ielsen string amplitudes.
\newblock {\em Symmetry 13}, 6 (2021).

\bibitem{Dyadic}
{\sc Butzer, P.~L., Schipp, F., Stankovic, R.~S., Su, W., and Wade, W.~R.}
\newblock {\em Dyadic Walsh Analysis from 1924 Onwards Walsh-Gibbs-Butzer
  Dyadic Differentiation in Science Volumes 1 and 2}, 1~ed.
\newblock Atlantis Studies in Mathematics for Engineering and Science 12.
  Atlantis Press, 2015.

\bibitem{Chuong2008TheCP}
{\sc Chuong, N.~M., and Co, N.~V.}
\newblock The {C}auchy problem for a class of pseudodifferential equations over
  p-adic field.
\newblock {\em J. Math. Anal. Appl. 340\/} (2008), 629--645.

\bibitem{appwaveletsreactiondiffusionmodels}
{\sc Correa-López, M. D.~J., Khrennikov, A.~Y., and Oleschko, K.}
\newblock Application of p-adic wavelets to model reaction–diffusion dynamics
  in random porous media.
\newblock {\em J. Fourier Anal. Appl. 22\/} (2016), 809–822.

\bibitem{e18070249}
{\sc Correa-López, M. D.~J., Khrennikov, A.~Y., and Oleschko, K.}
\newblock Modeling fluid’s dynamics with master equations in ultrametric
  spaces representing the treelike structure of capillary networks.
\newblock {\em Entropy 18\/} (2016).

\bibitem{nonlinearp-adicequationsandwavelets}
{\sc Correa-López, M. D.~J., Khrennikov, A.~Y., Oleschko, K., and Pourhadi,
  E.}
\newblock Solving nonlinear p-adic pseudo-differential equations: Combining the
  wavelet basis with the schauder fixed point theorem.
\newblock {\em J. Fourier Anal. Appl. 26\/} (2020).

\bibitem{ap-adicmathfirst30years}
{\sc Dragovich, B., Khrennikov, A.~Y., Kozyrev, S., Volovich, I., and Zelenov,
  E.}
\newblock P-{A}dic mathematical physics: The first 30 years.
\newblock {\em P-Adic Numbers Ultrametric Anal. Appl. 9\/} (2017).

\bibitem{Nilpotentliegroups}
{\sc Fischer, V., and Ruzhansky, M.}
\newblock {\em Quantization on Nilpotent Lie Groups}, 1~ed.
\newblock Progress in Mathematics 314. Birkhäuser Basel, 2016.

\bibitem{Torresblanca-Badillo2}
{\sc Gutierrez, I.~S., and Torresblanca-Badillo, A.}
\newblock Probability density functions and the dynamics of complex systems
  associated to some classes of non-archimedean pseudo-differential operators.
\newblock {\em J. Pseudo-Differ. Oper. Appl. 12\/} (2021).

\bibitem{GreenFunctVladimirovDerivatives}
{\sc Huang, A., Stoica, B., Yau, S.-T., and Zhong, X.}
\newblock Green's functions for {V}ladimirov derivatives and {T}ate's thesis.
\newblock {\em Commun. Number Theory Phys. 15\/} (2021), 455303.

\bibitem{p-adicnavierstokes}
{\sc Khrennikov, A.~Y., and Kochubei, A.}
\newblock On the p -adic {N}avier–{S}tokes equation.
\newblock {\em Appl. Anal. 99\/} (2018), 1--11.

\bibitem{porousmediumequation}
{\sc Khrennikov, A.~Y., and Kochubei, A.}
\newblock p-{A}dic analogue of the porous medium equation.
\newblock {\em J. Fourier Anal. Appl. 24\/} (2018), 1401–1424.

\bibitem{UltraPseudoDiffEq}
{\sc Khrennikov, A.~Y., Kozyrev, S., and Zúñiga-Galindo, W.}
\newblock {\em Ultrametric Pseudodifferential Equations and Applications}.
\newblock Encyclopedia of Mathematics and its Applications 168. Cambridge
  University Press, 2018.

\bibitem{wateroil}
{\sc Khrennikov, A.~Y., and Oleschko, K.}
\newblock Applications of p-adics to geophysics: Linear and quasilinear
  diffusion of water-in-oil and oil-in-water emulsions.
\newblock {\em Theoret. Math. Phys. 190\/} (2017), 154--163.

\bibitem{Kochubei2014RadialSO}
{\sc Kochubei, A.}
\newblock Radial solutions of non-archimedean pseudo-differential equations.
\newblock {\em Pacific J. Math. 269\/} (2014), 355--369.

\bibitem{heatequationp-adicball}
{\sc Kochubei, A.}
\newblock Linear and nonlinear heat equations on a p-adic ball.
\newblock {\em Ukrainian Math. J. 70\/} (2017), 193--205.

\bibitem{article}
{\sc Li, Y., and Qiu, H.}
\newblock P-adic laplacian in local fields.
\newblock {\em Nonlinear Anal. 139\/} (07 2016), 131--151.

\bibitem{doi:10.1142/S0217751X89002065}
{\sc MELZER, E.}
\newblock Nonarchimedean conformal field theories.
\newblock {\em Int. J. of Modern Physics A 04}, 18 (1989), 4877--4908.

\bibitem{RodriguezJ}
{\sc Rodríguez, J., and Zuniga-Galindo, W.}
\newblock Taibleson operators, p-adic parabolic equations and ultrametric
  diffusion.
\newblock {\em Pacific J. Math. 237\/} (01 2008).

\bibitem{fundsoltaibleson}
{\sc Rodríguez, J., and Zuniga-Galindo, W.}
\newblock Elliptic pseudodifferential equations and sobolev spaces over p -adic
  fields.
\newblock {\em Pacific J. Math. 246\/} (06 2010), 407--420.

\bibitem{RuzhanskyTurunenBook}
{\sc Ruzhansky, M., and Turunen, V.}
\newblock {\em Pseudo-Differential Operators and Symmetries: Background
  Analysis and Advanced Topics}, 1~ed.
\newblock Birkhäuser Basel, 2009.

\bibitem{pseudosvinlekinsaloff}
{\sc Saloff-Coste, L.}
\newblock {Operateurs pseudo-differentiels sur certains groupes totalement
  discontinus}.
\newblock {\em Studia Math.\/} (1986), 205–228.

\bibitem{Taiblesonbook}
{\sc Taibleson, M.}
\newblock {\em Fourier analysis on local fields}.
\newblock Princeton University Press, 1975.

\bibitem{Torresblanca-Badillo1}
{\sc Torresblanca-Badillo, A., and Zuniga-Galindo, W.}
\newblock Non-archimedean pseudodifferential operators with variable
  coefficients and feller semigroups.
\newblock {\em P-Adic Numbers Ultrametric Anal. Appl. 10\/} (2017).

\bibitem{Vilenkinpaper}
{\sc Vilenkin, N.}
\newblock On direct decompositions of topological groups.
\newblock {\em Rec. Math. [Mat. Sbornik] N.S. 19\/} (1946), 85 -- 154.

\bibitem{Vladimirovbook}
{\sc Vladimirov, V., Volovich, I., and Zelenov, E.}
\newblock {\em P-adic analysis and mathematical physics}.
\newblock Series on Soviet and East European mathematics, v. 1. World
  Scientific, 1994.

\bibitem{Vladimirov_1988}
{\sc Vladimirov, V.~S.}
\newblock Generalized functions over the field of p-adic numbers.
\newblock {\em Russ. Math. Surv. 43}, 5 (1988), 19--64.

\bibitem{cmp/1104178891}
{\sc Zabrodin, A.~V.}
\newblock {Non-Archimedean strings and Bruhat-Tits trees}.
\newblock {\em Comm. Math. Phys. 123}, 3 (1989), 463 -- 483.

\bibitem{articleFundSol}
{\sc Zuniga-Galindo, W.}
\newblock Local zeta functions and fundamental solutions for
  pseudo-differential operators over p-adic fields.
\newblock {\em P-Adic Numbers Ultrametric Anal. Appl. 3\/} (2011), 344–358.

\bibitem{BookZG}
{\sc Zúñiga-Galindo, W.~A.}
\newblock {\em Pseudodifferential Equations Over Non-Archimedean Spaces}, 1~ed.
\newblock Lecture Notes in Mathematics 2174. Springer International Publishing,
  2016.

\end{thebibliography}
\Addresses

\end{document}